\newcommand{\real}{{\rm Re}\,}
\newcommand{\imag}{{\rm Im}\,}
\newcommand{\C}{\mathbb C}
\newcommand{\R}{\mathbb R}
\newtheorem{theorem}{Theorem}[section]
\newtheorem{lemma}[theorem]{Lemma}
\newtheorem{prop}[theorem]{Proposition}
\theoremstyle{remark}
\newtheorem{remark}[theorem]{Remark}
\theoremstyle{example}
\newtheorem{example}[theorem]{Example}
\theoremstyle{definition}
\newtheorem{definition}[theorem]{Definition}
\numberwithin{equation}{section}
\begin{document}

\begin{abstract}
We construct a finitely dimensional invariant manifold of   
 holomorphic discs attached to a certain class of smooth pseudconvex hypersurfaces 
 of finite type in $\C^2$, generalizing the notion of stationary discs. The discs we construct 
 are determined by a finite jet at a given boundary point and their centers 
 fill an open set. As a consequence, we obtain a finite jet determination result for this class of 
 smooth hypersurfaces.
\end{abstract} 

\title[Stationary discs and finite jet determination]{Stationary discs for smooth hypersurfaces of finite type and finite jet determination}

\author{Florian Bertrand and Giuseppe Della Sala}

\subjclass[2010]{32H02, 32H12, 32V35}

\keywords{}
\thanks{Research of the first author was supported by Austrian Science Fund FWF grant M1461-N25.}
\thanks{Research of the second 
author was supported by Austrian Science Fund FWF grant P24878 N25.}
\maketitle 

\section{Introduction}
In the celebrated paper \cite{le}, L. Lempert introduced stationary discs 
as holomorphic discs attached to a given hypersurface, admitting a meromorphic lift to the 
cotangent bundle with at most one pole of order one at $0$, and attached to the conormal bundle (see also 
\cite{pa, tu}). He proved that, on a strongly convex domain,  stationary discs  coincide with extremal discs for the 
Kobayashi metric, and constructed an analogue of the Riemann mapping in higher dimension 
(see also \cite{su-tu,co-ga-su}). Due to their geometric properties, stationary discs  are  
natural invariants to study pluripotential theory \cite{le}, 
mapping extension problems \cite{le2, tu}, and jet determination problems \cite{be-bl}. 
Their existence relies on the study of a nonlinear Riemann-Hilbert  boundary problem,
whose solvability is strongly related to nondegeneracy properties of the given hypersurface.  
In such a case, its associated conormal bundle is totally real \cite{we}, and one can construct small stationary discs nearby 
a given one by 
developing a perturbation theory by introducing some integers invariant under homotopy, namely the partial indices and 
the Maslov index (see \cite{gl1, gl2, ce, mcd-sa}).
However, under no nondegeneracy conditions, it is not clear whether or not one can ensure the existence of smooth stationary 
discs, and not much seems to be known in that direction.

Surprisingly, when one allows the unique pole of the meromorphic lift to be of greater order, there might exist 
a lot of such discs, which still form a biholomorphically invariant family. 
Based on this observation, we define, for a given hypersurface of finite type, the notion 
of $k_0$-stationary discs, as attached holomorphic discs admitting a meromorphic lift with at most one pole of 
order $k_0$ at $0$.
In order to construct $k_0$-stationary discs nearby a given one, the associated 
Riemann-Hilbert problem seems to be no longer relevant since the conormal bundle of the given hypersurface is not 
anymore totally real. Instead, we introduce a nonlinear operator, whose construction is essentially based on 
Toeplitz and Hankel operators, and whose linearization involve Fredholm operators. The properties of the associated Fredholm indices such as their invariance under homotopy,  ensure the existence of nearby small $k_0$-stationary discs attached to perturbed hypersurface, and the number of real variables  parametrizing  the perturbed discs is completely determined by those indices. 
 Our main theorem can be stated as follows:
\begin{theorem}\label{theo1}
Let $S=\{-\real w + P(z,\overline z)=0=0\}$ be a hypersurface, where 
$$P(z,\overline z)=\sum_{j=d-k_0}^{k_0} \alpha_{j} z^j \overline{z}^{d-j},  \ \ \alpha_{j} = \overline \alpha_{d-j}, \ \ \alpha_{k_0}\neq 0, \ \ \frac{d}{2}\leq k_0 \leq d-1$$
is a subharmonic homogeneous polynomial
satisfying the open condition $\{P_{z\overline z}=0\} = \{0\}$. For a class of smooth perturbations $M$ of $S$, there exists a finitely dimensional biholomorphically invariant Banach manifold of $k_0$-stationary discs attached to $M$.  
\end{theorem}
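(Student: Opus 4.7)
The plan is to reduce the existence of $k_0$-stationary discs attached to $M$ to a nonlinear Fredholm problem. Since the conormal bundle of $S$ is no longer totally real when $k_0>1$, I cannot rely on the Riemann-Hilbert framework used in the nondegenerate stationary-disc theory, and must replace it by a setup built on Toeplitz and Hankel projectors, as suggested in the introduction.

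I would begin by producing an explicit family of $k_0$-stationary discs attached to the homogeneous model $S$. The attachment condition on $\partial\Delta$, combined with the existence of a meromorphic lift of the form $(\zeta^{-k_0}g(\zeta),h(\zeta))$ to the conormal of $S$, translates into Fourier-coefficient identities on the circle. Using the homogeneity of $P$ and the open condition $\{P_{z\overline{z}}=0\}=\{0\}$, I expect to find a center-preserving finite-dimensional family of such discs parametrized by the free coefficients of the lift, which will serve as the reference configuration.

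Next, I would encode the full problem as a nonlinear operator $\Phi$ defined on a Banach space consisting of discs $f=(z,w)$ of suitable boundary regularity together with a meromorphic lift datum, whose zero set is exactly the set of $k_0$-stationary discs attached to $M$. The two components of $\Phi$ are the attachment equation $-\real w+\rho_M(z,\overline{z})=0$ on $\partial\Delta$, and the meromorphic-lift condition, which amounts to the vanishing of the Hankel projection onto Fourier modes of order less than $-k_0$ of the boundary quantity $c(\zeta)\partial\rho_M(f(\zeta))$. Linearizing $\Phi$ at a model disc $f_0$ attached to $S$ yields, after extracting principal parts, a bounded linear operator of Toeplitz type whose matrix-valued symbol is determined by the second-order boundary data of $P$ along $f_0(\partial\Delta)$.

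The heart of the argument, and the main obstacle, is to show that this linearization is Fredholm with an explicitly computable index. Fredholmness should follow from non-degeneracy of the symbol on $\partial\Delta$, which in turn is forced by the hypothesis $\{P_{z\overline{z}}=0\}=\{0\}$ along the image of the model disc; one has to verify carefully that $f_0(\partial\Delta)$ avoids the degeneracy locus of $P$. For the index, I would exploit homotopy invariance: deform the symbol through a family of admissible symbols to a normal form, where the Toeplitz index is given by a winding number of the determinant of the symbol, and compute it in closed form in terms of $k_0$ and $d$. Once this index $N$ is identified with the number of free Fourier parameters, the implicit function theorem for Fredholm maps produces a Banach submanifold of real dimension $N$ of $k_0$-stationary discs attached to $S$; its persistence under smooth perturbations $M$ of $S$ follows from the stability of the Fredholm index together with surjectivity. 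Biholomorphic invariance is automatic, since both the order of the pole and the attachment to the conormal are intrinsic to the contact data of the disc with $M$.
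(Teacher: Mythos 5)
Your overall strategy --- encoding the lift condition through the projection onto negative Fourier modes, linearizing at an explicit model disc, establishing Fredholmness, computing the index by homotopy of the symbol, and concluding with the implicit function theorem --- is the same as the paper's. But there is a genuine gap at the step you yourself identify as the heart of the argument. You propose to deduce Fredholmness and surjectivity of the linearization from non-degeneracy of the symbol on $b\Delta$, ``forced by'' $\{P_{z\overline z}=0\}=\{0\}$ along the image of the model disc, and you note that one must check that $f_0(b\Delta)$ avoids the degeneracy locus. That check fails: the discs in play are tied to the origin, $f_0(1)=0$, and the degeneracy locus of $P$ is exactly the origin. Concretely, for $h_0(\zeta)=1-\zeta$ one has $\zeta^{k_0}P_{z\overline z}(1-\zeta,1-\overline\zeta)=(\zeta-1)^{d-2}Q(\zeta)$, so the symbol of the linearized lift equation vanishes to order $d-2$ at $\zeta=1$, and the naive linearization in the $\overline h'$ direction is not surjective onto the space of negative Fourier modes. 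This is precisely why the paper does not work with the operator you describe, but replaces its first component by $T_1'=\mathcal P\bigl(\tfrac{\zeta^{k_0}}{(1-\zeta)^{d-1}s(\zeta)}\,c\,\partial_z r(h,g)\bigr)$, where $s$ collects the roots of $Q$ lying outside $\overline\Delta$; the admissible perturbation class $X$ is designed so that $\partial_z r(h,g)\in(1-\zeta)^{d-1}\mathcal C^{k,\alpha}_\C$, and a separate lemma (namely that $\mathcal P((1-\zeta)^\ell u)=0$ if and only if $\mathcal P u=0$) guarantees that the renormalized operator has the same zero set. Only after this renormalization does the $\overline h'$-derivative reduce to $\overline{h''}\mapsto\mathcal P\bigl(t(\zeta)\overline{h''}\bigr)$ with $t$ a polynomial whose zeros lie inside $\Delta$, to which the Fredholm and winding-number computation applies. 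Without this step your implicit-function-theorem argument does not go through.

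A secondary point: the index you need is not that of a Toeplitz operator on $H^2$ but of $\mathcal P(\varphi\,\cdot)$ acting on the anti-holomorphic H\"older class $\overline{\mathcal A}^{k,\alpha}$, which is not a priori Fredholm for a general non-vanishing continuous symbol; the paper restricts to polynomial symbols with zeros in $\Delta$, proves surjectivity by hand, and only then invokes homotopy invariance of the index. This is a technical but necessary refinement of the ``standard Toeplitz index'' step you sketch.
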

The precise perturbation of the model hypersurface $S$ that one can allow is explicitly described in Section 3.
We emphasize also that the dimension of the Banach manifold we construct 
depends only  on the type and the essential type of the model hypersurface $S$. 

\vspace{0.5cm}

Similarly to \cite{be-bl}, the discs constructed in Theorem \ref{theo1} are particularly adapted 
to study jet determination of biholomorphic mappings. As we know from results of 
\cite{eca, ch-mo, ta}, germs of biholomorphisms preserving a real-analytic Levi nondegenerate 
hypersurface $M$ in $\C^n$ are uniquely determined by their $2$-jet at $p \in M$. 
In $\C^2$, P. Ebenfelt, B. Lamel and D. Zaitsev  \cite{eb-la-za}  proved that 
finite jet determination  of germs at a point $p$ of local biholomorphisms preserving a real analytic hypersurface $M$ 
holds if and only if $M$ is not Levi flat at $p$, and obtained a $2$-jet determination for real-analytic 
hypersurfaces of finite type (see also \cite{ber5, ba-mi-ro, la-mi, ko-me}).  
In the present paper, we consider the situation of smooth  hypersurfaces of finite type in $\C^2$. 
We first point out that the finite jet determination of smooth CR mappings of  smooth CR 
submanifolds of finite type has been considered in \cite{eb, ki-za} (see also \cite{eb-la}). 
Their approach is based on the method of  
complete differential systems, introduced by C.K. Han \cite{ha1, ha2}, and the order of the jet determination  obtained in 
\cite{ki-za} is independent of the type. However, the assumption of finite nondegeneracy, 
as well as the ($\mathcal C^\infty$) smoothness of both the mapping and the 
submanifold, seem to be essential for their method to work. Recently, the first co-author and L. Blanc-
Centi \cite{be-bl} 
obtained a $2$-jet determination for merely $\mathcal C^4$ Levi nondegenerate hypersurfaces in 
$\C^n$ by considering invariance and geometric properties of stationary discs attached to such 
hypersurfaces; 
see also \cite{bu-kr, hu1} for a somehow similar approach in order to study boundary versions of 
H. Cartan's uniqueness theorem. Following this approach, we obtain:
\begin{theorem}\label{theo2}
Let $M \subset \C^2$ be a smooth pseudoconvex hypersurface of finite type
whose defining function is written as (\ref{eqdef3}) at $p=0 \in M$. 
Then there exists an integer $\ell$, depending only on  the type and the essential type of $M$, such that 
the germs at $p$ of biholomorphisms $H$ such that $H(M)=M'$ are uniquely determined by their $\ell$-jet at $p$.
\end{theorem}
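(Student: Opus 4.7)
The plan is to reduce the finite jet determination of biholomorphisms to that of $k_0$-stationary discs, exploiting the biholomorphic invariance of the family provided by Theorem~\ref{theo1} and the fact that the centers of these discs fill an open set. First, I would verify that if $H$ is a germ of biholomorphism at $p=0$ with $H(M)=M'$, then $H$ sends $k_0$-stationary discs of $M$ to $k_0$-stationary discs of $M'$: the meromorphic lift of $H\circ f$ to the cotangent bundle is obtained by composing the lift of $f$ with the holomorphic cotangent bundle isomorphism induced by $H^{-1}$, so the single pole of order $k_0$ at $0$ is preserved. This identifies the Banach manifold of discs attached to $M$ with the corresponding manifold for $M'$ in a way compatible with the parametrization by boundary jets.

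Next, let $\ell_0$ denote the order of the boundary jet at the distinguished point $1\in\partial\Delta$ that parametrizes the manifold of $k_0$-stationary discs constructed in Theorem~\ref{theo1}, and set $\ell=\ell_0$. Suppose $H_1,H_2$ are two biholomorphic germs at $p$ with $H_i(M)=M'$ sharing the same $\ell$-jet at $p$. For every $k_0$-stationary disc $f$ attached to $M$ anchored at $p$ (i.e.\ with $f(1)=p$), the chain rule gives $j^{\ell_0}_1(H_1\circ f)=j^{\ell_0}_1(H_2\circ f)$. By the invariance from the first step, both $H_1\circ f$ and $H_2\circ f$ are $k_0$-stationary discs attached to $M'$ anchored at $p$, and by the jet parametrization of the Banach manifold they must coincide on $\overline\Delta$.

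Evaluating the identity $H_1\circ f=H_2\circ f$ at $\zeta=0$ gives $H_1(f(0))=H_2(f(0))$ for every disc in the family. Since the centers $f(0)$ fill an open set by Theorem~\ref{theo1}, $H_1$ and $H_2$ agree on an open neighborhood of some point, hence as germs at $p$ by the identity principle. The resulting integer $\ell$ depends only on $\ell_0$, which in turn depends only on the type $d$ and the essential type $k_0$ of $M$, consistent with the dimension count in Theorem~\ref{theo1}.

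The main obstacle I anticipate is verifying that the parametrizing jet used in Theorem~\ref{theo1} can be matched cleanly with a pure $\ell$-jet of $H$ at $p$: one needs the boundary jet of $H\circ f$ at $1$ to be expressible in terms of the $\ell$-jet of $H$ at $p=f(1)$ and the (fixed) boundary jet of $f$, and one must check that the parametrization is an injective map of jets rather than depending on a quotient or normalization that might obstruct this comparison. Handling the action of the automorphisms of $\Delta$ fixing the boundary point $1$ on the family, by normalizing the boundary jet of $f$ before comparing $H_1\circ f$ and $H_2\circ f$, is a related technicality that I would resolve before invoking the jet parametrization.
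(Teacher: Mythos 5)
Your overall strategy is the right one and matches the paper's: use the biholomorphic invariance of $k_0$-stationary discs (Proposition \ref{statinvar}), the injectivity of the boundary jet map $\mathfrak j_{\ell_0+2}$ on the disc manifold (Proposition \ref{propprop} $ii)$), and the fact that the centers fill an open set (Proposition \ref{propprop} $i)$), then evaluate at $\zeta=0$. However, there is a genuine gap at the step ``by the jet parametrization of the Banach manifold they must coincide.'' The jet map is \emph{not} shown to be injective on the set of all $k_0$-stationary discs attached to $M'$ anchored at $p$; it is injective only on the finite-dimensional submanifold $\mathcal M^{r}_{\eta}$, which lives inside an $\eta$-ball around the model disc $f_0=(1-\zeta,g_0)$ and is defined only for defining functions $r$ in a small neighborhood $V$ of the \emph{model} $\rho$ in $X$. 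The given hypersurface $M$ is not in $V$, and even if it were, there is no a priori reason that $H\circ f$ stays within the $\eta$-ball of $f_0$ or within the normalized slice $\Phi^{-1}(\Gamma)$. The paper bridges this with the anisotropic dilation $\phi_t(z,w)=(tz,t^dw)$: Lemma \ref{rt} shows $r_t\to\rho$ in $X$, so the disc manifold applies to $r_t$, and Lemma \ref{Ht} gives the quantitative estimate $\|H_t\circ f_q-f_q\|_Z\leq tK\|f_q\|_Z^{\ell+2}$ (which uses that $H_t=\phi_t^{-1}\circ H\circ\phi_t$ with $H$ tangent to the identity), forcing $(c_q,H_t\circ f_q)$ back into $\mathcal S^{r_t}_{0,\eta}$; the fact that the coefficient function $c_q$ is unchanged under composition then puts $H_t\circ f_q$ into $\mathcal M^{r_t}_{\eta}$, where the jet injectivity applies. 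Your proposal omits this rescaling entirely, and without it the appeal to the jet parametrization is not justified.

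Two smaller points. First, the normalization issue you flag at the end (``the action of the automorphisms of $\Delta$ fixing $1$'') is resolved in the paper not by renormalizing the discs but by cutting the full $(4k_0-d+3)$-dimensional family down to the submanifold $\mathcal M^r_\eta$ via the condition $\Phi\in\Gamma$ on the Fourier coefficients of the coefficient function $c$; since $H\circ f$ has the \emph{same} coefficient function as $f$ (this is the precise content of Proposition \ref{statinvar}), the condition is automatically preserved, so no quotient or renormalization is needed. Second, the correct jet order is $\ell_0+2=k_0-d/2+2$ (the paper's $\ell_0$ is $k_0-d/2$, the number controlling the kernel of the Toeplitz-type operator), so your $\ell$ should be identified with that quantity; reducing two maps $H_1,H_2$ with the same jet to a single $H=H_2^{-1}\circ H_1$ tangent to the identity, as the paper does, streamlines the comparison of jets along the discs.
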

The precise smoothness required for the hypersurface $M$ is formulated in Section $5$ and the precise order of jet is 
stated in Theorem \ref{theojet}. When the essential type of $M$ 
is exactly half of the type, namely when
$$M=\{- \real w + |z|^d+O\left(|z|^{d+1}+|\imag w||z|^{d-1}+|\imag w|^2\right)=0\},$$ 
we obtain the $2$-jet determination of germs of biholomorphisms at $0$. Finally, we stress that, due to the method we use, the result in Theorem \ref{theo2} holds for smooth CR-diffeomorphisms of $M$ as well (see Theorem \ref{theocr}), and we also 
obtain a boundary version of H. Cartan's uniqueness theorem (see Theorem \ref{theobd}). In principle, one might expect a higher jet determination for CR-diffeomorphisms than for biholomorphisms, although they coincide in the real-analytic case \cite{hu2, di-pi, di-pi2}. 
Thus, the question whether or not the order of the 
jet determination we obtain in Theorems \ref{theojet} is optimal, seems to be linked with the possible 
existence of one-sided biholomorphisms that do not extend across the hypersurface $M$. 
 
\vspace{0.5cm}

The paper is organized as follows. In Section 2, we discuss the properties of the Banach spaces of functions and operators that we
will need, and we define the notion of $k_0$-stationary discs. Section 3 is devoted to the construction of 
$k_0$-stationary discs
In Section 4, we study the geometric properties 
of $k_0$-stationary discs. Finally, we prove Theorem \ref{theo2} and other finite jet determination results in Section 5.

\section{Preliminaries}
We denote 
by $\Delta$ the  unit disc in $\C$.
We denote by $(z,w)$ the standard coordinates in $\C^2$. 
\subsection{Spaces of functions}

Let $k$ be an integer and let $0< \alpha<1$.
We denote by $\mathcal C^{k,\alpha}=\mathcal C^{k,\alpha}(b\Delta,\R)$ the space of real-valued functions  defined on $b\Delta$ of class 
$\mathcal{C}^{k,\alpha}$. The space  $\mathcal C^{k,\alpha}$ is endowed with  its usual norm
$$\|f\|_{\mathcal{C}^{k,\alpha}(\partial\Delta)}=\sum_{j=0}^{k}\|f^{(j)}\|_\infty+
\underset{\zeta\not=\eta\in b\Delta}{\mathrm{sup}}\frac{\|f^{(k)}(\zeta)-f^{(k)}(\eta)\|}{|\zeta-\eta|^\alpha},$$
where $\|f^{(j)}\|_\infty=\underset{\partial\Delta}{\mathrm{max}}\|f^{(j)}\|$.
We set $\mathcal C_\C^{k,\alpha} = \mathcal C^{k,\alpha} + i\mathcal C^{k,\alpha}$. Hence $f\in \mathcal C_\C^{k,\alpha}$ if and only if 
$\real f, \imag f \in \mathcal C^{k,\alpha}$. The space $\mathcal C_\C^{k,\alpha}$ is equipped with the norm
$$\|f\|_{\mathcal{C}_{\C}^{k,\alpha}(\partial\Delta)}=
\|\real f\|_{\mathcal{C}^{k,\alpha}(\partial\Delta)}+\|\imag f\|_{\mathcal{C}^{k,\alpha}(\partial\Delta)}$$ 
We denote by $\mathcal A^{k,\alpha}$ the subspace of $\mathcal C_{\C}^{k,\alpha}$ consisting of functions $f:\overline{\Delta}\rightarrow \C$, holomorphic on $\Delta$ with trace on 
$b\Delta$ belonging to $\mathcal C_\C^{k,\alpha}$. 
We define $(1-\zeta)\mathcal A^{k,\alpha}$
to be the subspace of 
$\mathcal C_{\C}^{k,\alpha}$  
of functions $f$ that can be written as $f=(1-\zeta)\tilde{f}$, with  $\tilde{f}\in \mathcal A^{k,\alpha}$. 
We equip  $(1-\zeta)\mathcal A^{k,\alpha}$ with the following norm 
$$\|(1-\zeta)\tilde{f}\|_{(1-\zeta)\mathcal A^{k,\alpha}}
=\|\tilde{f}\|_{\mathcal{C}_{\C}^{k,\alpha}(\partial\Delta)}.$$
Hence   $(1-\zeta)\mathcal A^{k,\alpha}$ is a Banach space. 
Notice that the inclusion of $(1-\zeta)\mathcal A^{k,\alpha}$ into 
$\mathcal A^{k,\alpha}$ is a bounded linear operator. We also point out that  
the linear operator  $L :  \mathcal A^{k,\alpha} \rightarrow (1-\zeta) \mathcal A^{k,\alpha}$, defined by 
$$L(\tilde{f})=(1-\zeta)\tilde{f}$$ 
is an isometry. 

Finally, we denote by $\mathcal C_0^{k,\alpha}$ the subspace of $\mathcal C^{k,\alpha}$ consisting of elements that can be written as $(1-\zeta)v$ with $v\in \mathcal C_\C^{k,\alpha}$. We equip $\mathcal C_0^{k,\alpha}$ with the norm
$$\|(1-\zeta)v\|_{\mathcal C_0^{k,\alpha}}=\|v\|_{\mathcal C_\C^{k,\alpha}}.$$
Notice that $\mathcal C_0^{k,\alpha}$ is a Banach space.

\subsection{Pseudoconvex hypersurfaces of finite type}

In this section, we recall some facts about pseudoconvex hypersurface of finite type in $\C^2$.
Let $M=\{r=0\}$ be a smooth pseudoconvex hypersurface defined in a neighborhood of the origin $C^2$. 
\begin{definition}\label{defco} 
Let $f~: \left(\Delta,0\right)\rightarrow \left(\C^2,0\right) $ be a holomorphic disc satisfying $f\left(0\right)=0$.
The order of contact $\delta_0\left(M,f\right)$ with $M$ at the origin is the degree of the first term 
in the Taylor expansion of $r \circ f$. We denote by $\delta\left(f\right)$ the multiplicity of $f$ at the origin. 
\end{definition}

We now define the D'Angelo type and the regular type of the real hypersurface $M$ at the origin.  

\begin{definition}\label{deftyp}\mbox{ }

 \begin{enumerate}[i)]
\item The D'Angelo type  of $M$ at the origin is defined by:
$$\Delta^1\left(M,0\right):=\sup \left\{\frac{\delta_0\left(M,f\right)}{\delta\left(f\right)}, \mbox{ } 
f:(\Delta,0)\rightarrow (\C^2,0), \mbox{ holomorphic} \right\}.$$
The point $0$ is a point of finite D'Angelo type $d$ if $\Delta^1\left(M,0\right)=d<+\infty$.
\item The   regular type of $M$ at origin is defined by:
\begin{eqnarray*}
\Delta^1_{{\rm reg}}\left(M,0\right)&:=&\sup \left\{\delta_0\left(M,f\right), \mbox{ } 
f:(\Delta,0)\rightarrow (\C^2,0),\mbox{ holomorphic}, d_0u\neq 0 \right\}.
\end{eqnarray*}
\end{enumerate}
\end{definition}
The  type condition  as defined in part $1$ of Definition \ref{deftyp} was introduced by 
J.-P. D'Angelo \cite{d'a} who proved that this coincides  with the regular type in complex manifolds
of dimension two.
Following \cite{ca} and \cite{fo-si}, a smooth pseudoconvex hypersurface $M$ of finite type $d$  
can be locally written as  $M=\{r=0\}$ with
\begin{equation}\label{eqdef}
r(z,w)=- \real w + P(z,\overline z)+O\left(|z|^{d+1}+|\imag w||z|^{\frac{d}{2}+1}+|\imag w|^2\right),
\end{equation}
where $P$ is a subharmonic homogeneous polynomial of degree $d$, not identically zero, with no harmonic terms. Notice then 
that $d$ is necessarily even.
In this paper, we will restrict our attention  smooth pseudoconvex hypersurfaces $M=\{r=0\}$ of finite type $d$, where
\begin{equation}\label{eqdef2}
r(z,w)=- \real w + P(z,\overline z)+O\left(|z|^{d+1}+|\imag w||z|^{d-1}+|\imag w|^2\right),
\end{equation}
such that  $\{P_{z\overline z}=0\} = \{0\}$. Notice that the set of points in $M$ of type greater than two is contained in $M\cap \{z=0\}$.
Given a polynomial $P$ satisfying the above conditions, the following hypersurface  will be called  a {\it model hypersurface},  $S = \{\rho = 0\}$ with  
$$\rho(z,w)=- \real w + P(z,\overline z).$$
To fix some notations, we write 
$$P(z,\overline z)=\sum_{j=d-k_0}^{k_0} \alpha_{j} z^j \overline{z}^{d-j},  \ \ \alpha_{j} = \overline \alpha_{d-j}, \ \ \alpha_{k_0}\neq 0.$$
where $d/2\leq k_0 \leq d-1$. The integer $d-k_0$ is {\it the essential type} of the model hypersurface. 
\vspace{0,7cm}

\subsection{$k_0$-stationary discs}

Let $M=\{r=0\}$ be a smooth pseudoconvex hypersurface defined in a neighborhood of the origin in $\C^2$. 
Let $k$ be an integer and let $0<\alpha<1$.     
A holomorphic disc $f \in (\mathcal A^{k,\alpha})^2$ is  {\it attached} to $M$  if $f(b\Delta)\subset M$.

\begin{definition}\label{defstat}
A holomorphic disc $f \in (\mathcal A^{k,\alpha})^2$ attached to  $M=\{r=0\}$ is $k_0$-stationary if there exists a  continuous function 
$c:b\Delta\to\R^*$ such that the map $\zeta^{k_0} c(\zeta)\partial r(f(\zeta))$, defined on $b \Delta$, 
extends as a map in $(\mathcal A^{k,\alpha})^2$. The map $(c,f)$ is called a lift of $f$.  
\end{definition}

We first prove that such holomorphic discs are invariant under biholomorphisms.
\begin{prop}\label{statinvar}
Let $M \subset \C^2$ be a smooth pseudoconvex real hypersurface  of finite type containing $0$. 
Let $H$ be a local biholomorphism in $\C^2$ sending $M$ to a real hypersurface $M'$. 
If  $f: \Delta \rightarrow \C^2$ is a $k_0$-stationary disc
attached to $M$ , then the disc $H\circ f$ is a  $k_0$-stationary disc attached to $M'$.   More precisely, if $(c,f)$ is a lift of $f$ attached to  $M$
then $(c,H\circ f)$ is a lift of $H\circ f$ attached to  $M'$. 
\end{prop}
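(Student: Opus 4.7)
The plan is to reduce the statement to the transformation law for $\partial r$ under a biholomorphism, combined with the holomorphicity and invertibility of the Jacobian of $H$. Since $H$ is a biholomorphism sending $M$ to $M'$, I would first choose as defining function for $M'$ the pullback $r':=r\circ H^{-1}$; any other defining function of $M'$ differs from this one by a smooth nonvanishing real factor, which can be harmlessly absorbed into $c$ (see the last paragraph). With this choice $(r'\circ H)(z,w) = r(z,w)$ near $0$, and the chain rule in $\partial$, applied to a holomorphic map $H$, yields
$$\partial r \;=\; (\partial r'\circ H)\cdot JH$$
on $M$, where $JH$ denotes the holomorphic Jacobian of $H$.

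Next I would restrict this identity to $f(b\Delta)\subset M$ and multiply by $\zeta^{k_0}c(\zeta)$, obtaining
$$\zeta^{k_0}c(\zeta)\,\partial r(f(\zeta))\;=\;\bigl(\zeta^{k_0}c(\zeta)\,\partial r'(H\circ f(\zeta))\bigr)\cdot JH(f(\zeta)).$$
By hypothesis the left-hand side extends as a map in $(\mathcal A^{k,\alpha})^2$. The map $\zeta\mapsto JH(f(\zeta))$ is the composition of the holomorphic $JH$ with the holomorphic disc $f$, so it is holomorphic on $\Delta$ with $\mathcal A^{k,\alpha}$ entries; moreover $JH$ is invertible, so $\zeta\mapsto JH(f(\zeta))^{-1}$ has the same regularity. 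Multiplying the previous identity by $JH(f(\zeta))^{-1}$ therefore shows that $\zeta^{k_0}c(\zeta)\,\partial r'(H\circ f(\zeta))$ extends as a map in $(\mathcal A^{k,\alpha})^2$, which is exactly the statement that $(c,H\circ f)$ is a lift of $H\circ f$ attached to $M'$. Since $c$ is already continuous and $\R^*$-valued on $b\Delta$, no further check is needed.

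The only delicate bookkeeping concerns the choice of defining function for $M'$: for an arbitrary $r'$, one would have $r'\circ H=\mu\,r$ for a smooth nonvanishing real function $\mu$, and the computation above would produce the lift $(\mu(f(\cdot))\,c,H\circ f)$ rather than $(c,H\circ f)$. Since $\mu(f(\zeta))$ is real and nonvanishing on $b\Delta$, this still qualifies as a lift, and the "more precisely" clause of the statement corresponds to normalizing the defining function of $M'$ so that $\mu\equiv 1$. Apart from this essentially trivial adjustment, the argument is a one-line chain-rule computation, and I do not foresee any serious obstacle.
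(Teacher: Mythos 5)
Your argument is correct and is essentially the same as the paper's: both take $r\circ H^{-1}$ as the defining function for $M'$, apply the chain rule to get $\partial(r\circ H^{-1})(H\circ f)=\partial r(f)\cdot JH(f)^{-1}$, and use that the inverse holomorphic Jacobian composed with $f$ lies in $(\mathcal A^{k,\alpha})^{2\times 2}$ to conclude. Your closing remark on how an arbitrary defining function of $M'$ merely rescales $c$ by a nonvanishing real factor is a sensible clarification of the ``more precisely'' clause, but it does not change the substance of the proof.
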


\begin{proof}
Let $f:\Delta \rightarrow \C^2$ be a $k_0$-stationary disc attached to $M=\{r=0\}$.  
and let $c:b\Delta\to\R^*$ be such that $\zeta^{k_0} c(\zeta)\partial r(f(\zeta))$ extends as a map in 
$(\mathcal A^{k,\alpha})^2$. Since $H$ sends $M$ to $M'$, the function $r \circ H^{-1}$ is a (local) defining function for $M'$.
It follows that $\zeta^{k_0} c(\zeta)\partial (r\circ H^{-1})(H\circ f(\zeta))=\zeta^{k_0} c(\zeta)\partial r (f(\zeta))(\partial H(f(\zeta))^{-1}$ extends as well as a $(\mathcal A^{k,\alpha})^2$ map.  
\end{proof}

We denote by $\mathcal{S}^{k_0,r}$ the set of lifts of $k_0$-stationary discs attached to 
$M=\{r=0\}$. Moreover, assume that the 
local definition function of $M$ is written in the form (\ref{eqdef}). We denote by $\mathcal{S}_{0}^{k_0,r}$, or simply $\mathcal{S}_{0}^r$, the elements $(c,f) \in \mathcal{S}^{k_0,r}$ satisfying $f(1)=0$. In such a case, we say that the lift $(c,f)$ is {\it 
tied to the origin}.

\begin{example}\label{exmodel}
Consider a model hypersurface $S = \{\rho = 0\}$ with  
$$\rho(z,w)=- \real w + P(z,\overline z)=- \real w + \sum_{j=d-k_0}^{k_0} \alpha_{j} z^j \overline{z}^{d-j}.$$
We have 
$$
\left\{
\begin{array}{lll}
\frac{\partial \rho}{\partial w}(z,w) = -\frac{1}{2}\\
\\
\frac{\partial\rho}{\partial z}(z,w) = P_z(z,\overline z) = \sum_{j=d-k_0}^{k_0} j\alpha_{j} z^{j-1} \overline{z}^{d-j}.\\
\end{array}
\right.
$$

We restrict to coefficient functions of the form $c(\zeta) = \left(\overline{b\zeta} +1+b\zeta\right)^{k_0}$ where $b \in \C$ is such that 
$|b|<1/2$. Set  $c'(\zeta) = \overline{b\zeta} +1+b\zeta$. It follows that $\zeta c'$, and thus 
$-\frac{1}{2} \zeta^{k_0} c,
$ extend holomorphically to the unit disc.
Following the arguments in \cite{be-bl}, we set 
$$h(\zeta)=\frac{1-\zeta}{1-a(b)\zeta}v,$$ 
with $a= \frac{-1+\sqrt{1-4|b|^2}}{2b}$ and  $v \in \C^*$, which implies that $\zeta c' \overline h\in \mathcal A^{k,\alpha}$.  Then 
$$\zeta^{k_0} c(\zeta) P_z (h(\zeta),\overline{h(\zeta)}) = \sum_{j=d-k_0}^{k_0} j\alpha_{j} (\zeta c'(\zeta))^{k_0 - d + j}  h(\zeta)^{j-1} (\zeta c'(\zeta)\overline{h(\zeta)})^{d-j}.$$ 
Since $k_0 - d + j \geq 0$ and  $d - j\geq 0$ for $d-k_0\leq j \leq k_0$, every term in the sum on the right hand side belongs to 
$\mathcal A^{k,\alpha}$. Hence $\zeta^{k_0}cP_z(h,\overline h) \in \mathcal A^{k,\alpha}$. 
Imposing the further condition $g(1)=0$, one can find, by standard results about the 
Hilbert transform, a map $g$ such that 
$f=(h,g)$ is a $k_0$-stationary disc attached to $S$ and tied to the origin.

In particular, if $b=0$ and $v=1$, it follows that $c_0$ is identically equal to $1$ and that 
$h_0(\zeta)=1-\zeta$. Denote by $f_0=(h_0,g_0)$ the corresponding 
stationary disc.    
\end{example}
The disc $f_0$ we have given in Example \ref{exmodel} is essential in our approach in order  to obtain a family of $k_0$-stationary discs by deformation of both $f_0$ and the model hypersurface.

\subsection{Regularity and Fredholmness of integral operators on $b\Delta$}

We denote by $H^2(b\Delta)$ the classical Hardy space on the unit circle and by $\mathcal P':L^2(b\Delta)\to H^2(b\Delta)$ the Szeg\"o projection. The Szeg\"o projection defines a linear operator $\mathcal P': \mathcal C_\C^{k,\alpha}\to \mathcal A^{k,\alpha}$. 
Denoting by $\mathcal T$ the Hilbert transform, one  can deduce by \cite{ber} (6.1.37) the following relation 
\begin{equation}\label{eqsh}
-2i\mathcal P' = \mathcal T - i{\rm Id}-iC_0, 
\end{equation} 
where $C_0u=\int_0^{2\pi}u(e^{i\theta})d\theta$.
Due to  Privalov's Theorem,  the Hilbert transform  is a well-behaved operator (see \cite{ber}, Corollary $6.1.31$): 
\begin{theorem}
Let $k\geq 0$ be an integer and let $0<\alpha<1$ be a real number. The Hilbert transform 
$\mathcal T: \mathcal{C}^{k,\alpha} \rightarrow \mathcal{C}^{k,\alpha}$ is a bounded 
linear operator. 
\end{theorem}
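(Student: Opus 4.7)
The plan is to establish the theorem in two stages: first the base case $k=0$ (the classical Privalov theorem), then an induction/commutation argument for $k\geq 1$.

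For the base case, I would work directly with the principal value representation
$$\mathcal T u(e^{i\theta}) = \frac{1}{2\pi}\,\mathrm{p.v.}\!\int_0^{2\pi} u(e^{i\phi}) \cot\!\left(\frac{\theta-\phi}{2}\right) d\phi,$$
and exploit the fact that the conjugation kernel is odd on symmetric intervals, so subtracting $u(e^{i\theta})$ inside the integral renders the singularity integrable. The $L^\infty$ bound follows from this subtraction and the $\alpha$-Hölder modulus of $u$. For the Hölder seminorm, fix two points $e^{i\theta_1}, e^{i\theta_2}$ with $\delta = |\theta_1 - \theta_2|$ small, split the integration domain into a near zone $|\phi-\theta_1|\leq 2\delta$ and a far zone, and estimate each piece separately: on the near zone one uses the Hölder bound on $u$ against the weak singularity of $\cot$, while on the far zone one uses the Lipschitz bound for the difference $\cot((\theta_1-\phi)/2)-\cot((\theta_2-\phi)/2)$ together with the Hölder bound on $u$. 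Summing the two contributions yields a bound of the form $C\|u\|_{\mathcal C^{0,\alpha}}\,\delta^\alpha$.

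For the inductive step, the key observation is that the Hilbert transform commutes with tangential differentiation: the kernel $\cot((\theta-\phi)/2)$ is translation-invariant, so formally $\frac{d}{d\theta}(\mathcal T u) = \mathcal T\!\left(\frac{du}{d\theta}\right)$. I would justify this rigorously by approximating $u\in\mathcal C^{k,\alpha}$ by trigonometric polynomials (on which $\mathcal T$ acts diagonally with eigenvalues $\mp i$ on positive/negative frequencies, making commutation with $d/d\theta$ obvious) and passing to the limit. Combined with the base case applied to $u^{(k)}\in\mathcal C^{0,\alpha}$, this yields boundedness of $\mathcal T$ on $\mathcal C^{k,\alpha}$ with an explicit constant.

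The main obstacle will be the quantitative Hölder estimate in the base case, in particular making the splitting of the principal value integral sufficiently uniform to bound the Hölder seminorm of $\mathcal T u$ in terms of that of $u$. Once that is in place, the induction is essentially formal. An alternative route that bypasses the singular integral altogether would be to identify $\mathcal T u + i u + iC_0 u$ with (twice) the harmonic conjugate via the Szeg\H o projection through the identity~(\ref{eqsh}), and invoke known regularity results on the Poisson extension and its conjugate up to the boundary; but the direct singular integral estimate is the most transparent.
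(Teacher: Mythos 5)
Your outline is essentially correct, but it is worth noting that the paper does not prove this statement at all: it is quoted verbatim as Privalov's theorem from Baouendi--Ebenfelt--Rothschild \cite{ber}, Corollary 6.1.31, and the only thing the authors actually do with it is combine it with the identity (\ref{eqsh}) to conclude boundedness of the Szeg\H{o} projection on $\mathcal C^{k,\alpha}_{\C}$. So you are supplying a self-contained argument where the paper defers to the literature, and your argument is the standard one: the $k=0$ case by the subtraction trick and the near/far splitting of the principal value integral (for the far zone you will need, in addition to the Lipschitz estimate on the kernel difference, the cancellation $\mathrm{p.v.}\int\cot(\cdot)=0$ over almost-symmetric regions to absorb the term $(u(e^{i\theta_1})-u(e^{i\theta_2}))\,\mathrm{p.v.}\!\int_{\mathrm{far}}\cot$, which would otherwise produce a spurious $\log(1/\delta)$), and then commutation with $d/d\theta$ for $k\geq 1$. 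The one step I would tighten is the justification of $\frac{d}{d\theta}\mathcal T u=\mathcal T(u')$: trigonometric polynomials are \emph{not} dense in $\mathcal C^{k,\alpha}$ (their closure is the little H\"older space), so "passing to the limit" must be taken in a weaker topology, e.g.\ $\mathcal C^1$ convergence of Fej\'er means; alternatively, observe that for $u\in\mathcal C^1$ both $(\mathcal T u)'$ and $\mathcal T(u')$ are continuous with the same Fourier coefficients $-i\,\mathrm{sgn}(n)\,in\,\hat u(n)$, hence equal. With that repair the induction is indeed formal, and your alternative route through the Szeg\H{o} projection would be circular in the context of this paper, since there (\ref{eqsh}) is used in the opposite direction.
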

Hence, it follows from (\ref{eqsh}) that the Szeg\"o projection $\mathcal P': \mathcal C_\C^{k,\alpha}\to \mathcal A^{k,\alpha}$ is a 
bounded linear operator. Out of convenience, however, we will mainly consider the analogous bounded projection $\mathcal P:\mathcal C_\C^{k,\alpha}\to 
\overline{\zeta\mathcal A}^{k,\alpha}$ defined by 
\begin{equation}
\mathcal P(u) = \overline {\zeta\mathcal P'\left(\overline {\zeta u}\right)}=u-\mathcal P'(u).
\end{equation}
The space $\overline{\zeta\mathcal A}^{k,\alpha}$ is endowed with the induced norm of  $\mathcal C_\C^{k,\alpha}$.
Notice that a function $u$ extends as a function  in $\mathcal A^{k,\alpha}$ if and only if $\mathcal P(u) =0$. 
More precisely, if $u = u' + u''$ with $u' = \sum_{n\geq 0} u_n\zeta^n$ and 
 $u'' = \mathcal P(u) = \sum_{n< 0} u_n \zeta^n$ then $\mathcal P(u)=u''$.

Recall that when $\varphi$ is a  complex valued continuous function defined on $b\Delta$ that   
does not vanish,  then the Toeplitz operator with symbol $\varphi$ defined by 
$T_{\varphi}=\mathcal P'(\varphi .): H^2(b\Delta) \rightarrow H^2(b\Delta)$ is a Fredholm operator, that is with finite dimensional kernel and cokernel. 
Moreover, it index, namely $dim_\C \ker T_{\varphi}-dim_\C \  {\rm coker} T_{\varphi}$ ,  is the opposite of the  winding number 
$-\frac{1}{2\pi i }\int_{b\Delta} \frac{d\varphi}{\varphi}$ of $\varphi$. It follows then that for such a $\varphi$, the operator  $\mathcal P(\varphi .): 
\overline{H}^2(b\Delta) \rightarrow \overline{\zeta H}^2(b\Delta)$ is also Fredholm of index $-\frac{1}{2\pi i }\int_{b\Delta} 
\frac{d\overline{\zeta\varphi}}{\overline{\zeta\varphi}}$. Note that, although the operator $\mathcal P(\varphi .)$ is defined in a similar way than the Hankel 
operator $\mathcal P(\varphi .): H^2(b\Delta) \rightarrow \overline{\zeta H}^2(b\Delta)$, they do not agree since their source spaces differ. In this paper, we need to consider the operator $\mathcal P(\varphi .)$ restricted to 
$\overline{\mathcal A}^{k,\alpha} \subset \overline{H}^2(b\Delta)$. Although it is not clear whether or not
 $\mathcal P(\varphi .)$ is Fredholm in that case, for a very special class of functions 
$\varphi$, we have the following:
\begin{lemma}\label{lemfred}
Let $\varphi \in \mathcal C^{k,\alpha}$ be a holomorphic polynomial whose zeros are contained in the unit disc $\Delta$. 
Then the operator $\mathcal P(\varphi .): \overline{\mathcal A}^{k,\alpha} \rightarrow \overline{\zeta\mathcal A}^{k,\alpha}$ 
is Fredholm of index the opposite of the  winding number of $\overline{\zeta \varphi}$. 
More precisely,  $\mathcal P(\varphi .)$ is surjective and its kernel has complex dimension $-\frac{1}{2\pi i }\int_{b\Delta} \frac{d\overline{\zeta\varphi}}{\overline{\zeta\varphi}}$. 
\end{lemma}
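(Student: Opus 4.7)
The plan is to reduce everything to an algebraic identity on $b\Delta$ by introducing the ``reflected'' polynomial
$$\tilde\varphi(\zeta):=\sum_{j=0}^n \overline{\varphi_j}\,\zeta^{n-j},\qquad \varphi(\zeta)=\sum_{j=0}^n\varphi_j\zeta^j,\ \varphi_n\neq 0.$$
The two key properties I would use are: (i) on $b\Delta$, since $\bar\zeta=\zeta^{-1}$, one has
$\overline{\varphi(\zeta)}=\zeta^{-n}\tilde\varphi(\zeta)$
and dually $\overline{\tilde\varphi(\zeta)}=\zeta^{-n}\varphi(\zeta)$; (ii) the zeros of $\tilde\varphi$ are precisely the reflections $1/\overline{a_j}$ of the nonzero zeros $a_j$ of $\varphi$ across $b\Delta$, hence lie outside $\overline\Delta$, so $\tilde\varphi$ is nowhere vanishing on $\overline\Delta$ and $1/\tilde\varphi\in\mathcal A^{k,\alpha}$.

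For surjectivity, given $u\in\overline{\zeta\mathcal A}^{k,\alpha}$ I would write $u=\bar h$ with $h\in\zeta\mathcal A^{k,\alpha}$ and set
$$g:=\frac{\zeta^n h}{\tilde\varphi}\in \mathcal A^{k,\alpha},$$
which belongs to $\mathcal A^{k,\alpha}$ by (ii). A short calculation on $b\Delta$ based on $\overline{\tilde\varphi}=\zeta^{-n}\varphi$ gives $\varphi\bar g=\varphi\cdot\zeta^{-n}\bar h\cdot\zeta^n/\varphi=\bar h=u$ identically, so in particular $\mathcal P(\varphi\bar g)=u$. This produces an explicit right inverse and shows that $\mathcal P(\varphi\cdot)$ is surjective with closed range.

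For the kernel, $\mathcal P(\varphi\bar g)=0$ means $\varphi\bar g=p$ for some $p\in\mathcal A^{k,\alpha}$. Taking complex conjugates on $b\Delta$ and multiplying by $\zeta^n$ I obtain
$$\tilde\varphi(\zeta)\,g(\zeta)=\zeta^n\overline{p(\zeta)}\quad\text{on }b\Delta.$$
The left hand side carries only nonnegative Fourier modes, while the right hand side only modes $\leq n$; both must therefore coincide with a common polynomial $P$ in $\zeta$ of degree $\leq n$, and by (ii) we have $g=P/\tilde\varphi$. Conversely, every polynomial $P$ of degree $\leq n$ produces such a $g$, so the kernel is parametrized bijectively by $\{P:\deg P\leq n\}$ and has complex dimension $n+1$.

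To identify the index with the winding number, I would use (i) to write $\overline{\zeta\varphi}|_{b\Delta}=\zeta^{-n-1}\tilde\varphi$; since $\tilde\varphi$ has no zeros in $\overline\Delta$ the argument principle gives winding number $-(n+1)$ for $\overline{\zeta\varphi}$, which is exactly the opposite of $\dim_{\mathbb C}\ker\mathcal P(\varphi\cdot)=n+1$. The main (minor) obstacle is the frequency bookkeeping that lets one pass from the distributional equation $\tilde\varphi g=\zeta^n\bar p$ to the polynomial identity $\tilde\varphi g=P$, together with the verification that the argument still goes through in the degenerate case $\varphi(0)=0$ (where $\tilde\varphi$ has degree strictly less than $n$ but is still nonvanishing on $\overline\Delta$). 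In either case, the crucial fact enabling the entire argument in the $\mathcal C^{k,\alpha}$ setting is that division by $\tilde\varphi$ preserves Hölder regularity on the boundary.
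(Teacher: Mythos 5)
Your proposal is correct. The surjectivity argument is essentially the paper's: your reflected polynomial $\tilde\varphi$ plays exactly the role of the factor $\varphi_2(\zeta)=\prod(\overline\zeta q_j-1)$ in the paper (both exploit that reflecting the zeros across $b\Delta$ yields a factor that is invertible in the disc algebra, so that one can divide and produce an explicit preimage). Where you genuinely diverge is the index computation: the paper does \emph{not} compute the kernel of $\mathcal P(\varphi\,\cdot)$ directly for general $\varphi$; it invokes local constancy of the Fredholm index along a homotopy of symbols, deforms $\varphi$ to $\zeta^m$, and counts the kernel only in that monomial case. You instead solve $\mathcal P(\varphi\overline g)=0$ directly: conjugating and multiplying by $\zeta^n$ reduces it to $\tilde\varphi g=\zeta^n\overline p$, and the Fourier-support argument forces both sides to be a polynomial of degree at most $n$, giving the explicit kernel $\{\overline{P/\tilde\varphi}:\deg P\le n\}$ of dimension $n+1$. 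This is more elementary (no Fredholm perturbation theory needed, and no need to check that the intermediate operators along the homotopy remain Fredholm), and it has the bonus of producing an explicit basis of the kernel by partial fractions --- essentially the description $h''=s_{0,0}+\sum_{j,i}s_{j,i}(1-\overline r_j\zeta)^{-(i+1)}$ that the paper has to re-derive later by a separate recursion argument in the proof of Proposition \ref{propprop}. The paper's homotopy route, on the other hand, is softer and would apply to non-polynomial nonvanishing symbols. Your handling of the degenerate case $\varphi(0)=0$ (where $\deg\tilde\varphi<n$ but $\tilde\varphi$ is still zero-free on $\overline\Delta$) and the closing winding-number identification $\overline{\zeta\varphi}=\zeta^{-n-1}\tilde\varphi$ are both fine.
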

\begin{proof}
First note that since  $\overline{\mathcal A}^{k,\alpha}\subset \overline{H}^2(b\Delta)$, the kernel of  $\mathcal P(\varphi .): \overline{\mathcal A}^{k,\alpha} \rightarrow \overline{\zeta\mathcal A}^{k,\alpha}$ is included in the one of $\mathcal P(\varphi .): 
\overline{H}^2(b\Delta) \rightarrow \overline{\zeta H}^2(b\Delta)$  and hence is finitely dimensional. 
Write 
$$\varphi(\zeta) = C\prod_{1\leq j \leq \ell} (q_j - \zeta)=\frac{C}{\overline{\zeta}^\ell}\varphi_2(\zeta),$$ 
where 
$\varphi_2(\zeta)=\prod_{1\leq j \leq \ell} (\overline{\zeta}q_j - 1)$.   
Then $\varphi_2$ extends antiholomorphically to the unit disc and the extension is nowhere vanishing on $\overline \Delta$. It follows that both $\varphi_2$ and its inverse belong to $\overline{\mathcal A}^{k,\alpha}$. Now, for any $v\in \overline{\zeta\mathcal A}^{k,\alpha}$ define 
$$u(\zeta) = \frac{\overline \zeta^{\ell}}{C} \frac{v(\zeta)}{\varphi_2(\zeta)}\in \overline{\mathcal A^{k,\alpha}}.$$ 
Since $v\in \overline{\zeta\mathcal A}^{k,\alpha}$, we have $\mathcal P(\varphi u)=v$, which proves the surjectiviy of $\mathcal P(\varphi .)$. Thus $\mathcal P(\varphi .)$ is a Fredholm operator and 
its index is equal to $dim_\C \ker \mathcal P(\varphi.)$.

In order to compute its Fredholm index, we need the following observation. The boundedness of 
the operator $\mathcal P$ implies that the map $\varphi \mapsto \mathcal P(\varphi .)$ is continuous. 
Due to the local constancy of the index of a Fredholm operator (see Theorem 5.2 p. 42 \cite{bo-bl}), it follows 
that if  $\varphi$ and $\varphi'$ are two homotopically equivalent holomorphic polynomials whose zeros are contained in the unit disc $\Delta$, then  $\mathcal P(\varphi .)$ and  
$\mathcal P(\varphi .)$ have the same index. And since any such function $\varphi: b\Delta \to \C^*$ is homotopically equivalent to some $\zeta^m$, for some integer $m\geq 0$, it is enough to compute the 
index of the Fredholm operator   $\mathcal P(\zeta^m .)$. 
We  write the Fourier expansion of $u \in \overline{\mathcal{A}}^{k,\alpha}$ as 
$u(\zeta) =  \sum_{n < 0 } u_n \zeta^n$. Thus 
$$\mathcal P(\zeta^m u)= \sum_{n<-m} u_n \zeta^{m+n}=0$$
if and only if $u_n=0$  whenever $n< -m$. This implies that  
$dim_\C \ker\mathcal P(\zeta^m .)=m+1$. 
\end{proof}

Finally, we will need the following two lemmas.

\begin{lemma}\label{lemc}
Let $k_0 \geq 1$ be an integer and let $c:b\Delta\to\R^*$ be a continuous function. Then 
$\mathcal P(\zeta^{k_0}c)=0$ if and only if 
\begin{equation}\label{eqc}
c(\zeta) = \overline c_{k_0} \zeta^{-k_0} + \overline c_{k_0 - 1} \zeta^{-(k_0-1)} + \ldots + c_0+ \ldots + c_{k_0-1} \zeta^{k_0-1} + 
c_{k_0} \zeta^{k_0},
\end{equation}
 where $c_0 \in \R$ and $c_1,\ldots,c_{k_0} \in \C$.   
\end{lemma}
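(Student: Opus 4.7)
The plan is to prove both directions by computing the Fourier expansion of $c$ explicitly. Since $c$ is continuous on $b\Delta$, it admits a Fourier series $c(\zeta)=\sum_{n\in\Z} c_n\zeta^n$ (convergent in $L^2(b\Delta)$), and I will use the characterization of $\mathcal P$ given right after its definition: $\mathcal P$ picks out the strictly negative Fourier modes.

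First, I would translate the assumption $\mathcal P(\zeta^{k_0}c)=0$ into a condition on the $c_n$. Writing
\begin{equation*}
\zeta^{k_0}c(\zeta)=\sum_{n\in\Z} c_n\zeta^{n+k_0}=\sum_{m\in\Z} c_{m-k_0}\zeta^m,
\end{equation*}
the projection $\mathcal P$ applied to this equals $\sum_{m<0} c_{m-k_0}\zeta^m$. Setting this to zero forces $c_j=0$ for every $j<-k_0$, so $c(\zeta)=\sum_{j\geq -k_0} c_j\zeta^j$.

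Next, I would use that $c$ is real-valued. On $b\Delta$ we have $\bar\zeta=\zeta^{-1}$, so the reality condition $\overline{c(\zeta)}=c(\zeta)$ becomes $\overline{c_{-j}}=c_j$ for every $j\in\Z$. Combined with the vanishing $c_j=0$ for $j<-k_0$ established above, this yields $c_j=0$ also for $j>k_0$. Thus
\begin{equation*}
c(\zeta)=\sum_{j=-k_0}^{k_0} c_j \zeta^j,\qquad c_{-j}=\overline{c_j},
\end{equation*}
which is exactly the expression in the statement (with $c_0\in\R$ automatic from $c_0=\overline{c_0}$). Conversely, any $c$ of this form has Fourier support in $[-k_0,k_0]$, so $\zeta^{k_0}c$ has only nonnegative Fourier modes and therefore $\mathcal P(\zeta^{k_0}c)=0$; this direction is immediate.

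There is no real obstacle: the content is purely a Fourier-mode bookkeeping. The only minor point to address is that $c$ is assumed merely continuous, but the Fourier series argument takes place in $L^2(b\Delta)$, and once $c$ is shown to be a finite trigonometric polynomial the continuity (and nonvanishing) is consistent a posteriori.
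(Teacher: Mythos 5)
Your proof is correct and follows essentially the same route as the paper's: expand $c$ in Fourier series, use the reality condition $c_{-n}=\overline{c_n}$, and observe that $\mathcal P(\zeta^{k_0}c)=0$ forces $c_n=0$ for $n<-k_0$, hence (by reality) also for $n>k_0$. The converse is immediate in both arguments, so there is nothing to add.
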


\begin{proof}
We  write the Fourier expansion of $c:b\Delta\to\R^*$  as $c(\zeta) =  \sum_{n\in \mathbb Z} c_n \zeta^n$, where $c_n\in \mathbb C$ and 
$c_n = \overline c_{-n}$ for all $n\in \mathbb Z$. Since the Fourier expansion of $\zeta^{k_0} c$ is given by $\sum_{n\in \mathbb Z} c_n \zeta^{n+k_0}$, in order for it to belong to $\mathcal A^{k,\alpha}$ we must have $c_n = 0$ for all $n< -k_0$, from which follows (\ref{eqc}). Conversely, the fact that such a $c$ satisfies $P(\zeta^{k_0}c)=0$ 
is immediate. 
\end{proof}

\begin{lemma}\label{Pu}
Let $u\in \mathcal C_\C^{k,\alpha}$ and let  $\ell\in \mathbb Z$. Then $\mathcal P\left((1-\zeta)^\ell u\right) = 0$ if and only if $\mathcal P u = 0$.
\end{lemma}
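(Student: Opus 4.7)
The plan is to reduce the general case to the two base cases $\ell = \pm 1$ via induction on $|\ell|$, and then to dispatch these base cases using the Fourier expansion characterization of $\mathcal{P}$. Recall that if we write $u(\zeta) = \sum_{n\in\mathbb{Z}} u_n \zeta^n$, the condition $\mathcal{P}u = 0$ is equivalent to $u_n = 0$ for all $n < 0$.

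For the inductive step with $\ell \geq 2$, I would factor $(1-\zeta)^\ell u = (1-\zeta)\bigl((1-\zeta)^{\ell-1}u\bigr)$; applying the $\ell=1$ base case to $v := (1-\zeta)^{\ell-1} u$ and then the inductive hypothesis for $\ell-1$ to $u$ yields the desired equivalence. The case $\ell \leq -2$ is handled analogously by factoring through $(1-\zeta)^{-1}$ and using the $\ell = -1$ base case.

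For $\ell = 1$, the Fourier expansion gives $(1-\zeta)u = \sum_{n\in\mathbb{Z}}(u_n - u_{n-1})\zeta^n$. One implication is immediate: if $u_n = 0$ for all $n < 0$, then each $u_n - u_{n-1}$ vanishes for $n < 0$ as well. Conversely, $\mathcal{P}((1-\zeta)u) = 0$ forces $u_n = u_{n-1}$ for every $n < 0$, so the tail $\{u_n\}_{n \leq -1}$ is a constant sequence; since $u \in \mathcal C_\C^{k,\alpha} \subset L^2(b\Delta)$ its Fourier coefficients tend to $0$ as $n \to -\infty$, and this common value must therefore vanish, giving $\mathcal{P}u = 0$. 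The case $\ell = -1$ follows by setting $w := u/(1-\zeta)$ and applying the previous argument to the relation $u = (1-\zeta)w$.

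The only mild subtlety, rather than a genuine obstacle, is to make sense of $(1-\zeta)^\ell u$ as an element of the domain of $\mathcal{P}$ when $\ell < 0$: this requires $u$ to vanish to order at least $|\ell|$ at $\zeta = 1$, which is implicit in the lemma's hypothesis and plays no role in the Fourier-coefficient computation itself. An equivalent, slightly more geometric way to organize the same proof would be to observe that $\mathcal{P}u = 0$ means $u$ extends to $\tilde u \in \mathcal A^{k,\alpha}$, and then to use that $(1-\zeta)^\ell$ is nowhere vanishing on the open disc $\Delta$, so multiplying or dividing by it preserves holomorphy; the Fourier-series route is however the cleanest way to control the boundary point $\zeta = 1$.
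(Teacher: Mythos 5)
Your proof is correct and follows essentially the same route as the paper's: reduce by induction to the case $\ell=1$, compute the Fourier coefficients of $(1-\zeta)u$, and observe that the resulting recursion $u_n=u_{n-1}$ for $n<0$ forces the negative tail to be a constant sequence, which must vanish since Fourier coefficients of a $\mathcal C_\C^{k,\alpha}$ function tend to zero. Your explicit treatment of the $\ell<0$ case and of the decay of the coefficients is slightly more careful than the paper's, but the underlying argument is identical.
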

\begin{proof}
 By induction it is sufficient to show that $\mathcal P((1-\zeta) u) = 0$ if and only if $\mathcal P u = 0$.
Let $u(\zeta) =  \sum_{n\in \mathbb Z} c_n \zeta^n$ be the Fourier expansion of $u$, and write $u = u' + u''$ with $u' = \sum_{n\geq0} u_n\zeta^n$ and 
 $u'' = \mathcal P(u) = \sum_{n<0} u_n \zeta^n$. 
 It is clear that $(1-\zeta)u'$ always extends holomorphically, hence  we must show that $\mathcal P((1 - \zeta)u'') = 0$ if and only if $u'' = 0$. Now, we have $(1-\zeta)u'' = \sum_{n\leq 0} v_n \zeta^n$, where
$$
\left\{
\begin{array}{lll}
v_0=u_{-1}\\
\\
v_n = u_{n}-u_{n-1}\mbox{,  } \mbox{ } n\leq -1.
\end{array}
\right.
$$
If $\mathcal P((1-\zeta) u'')=0$, we must have $v_n=0$ for all $n\leq -1$, which implies that $u_n=u_{n-1}$ 
for all $n \leq -1$. Since $u''\in \mathcal C_\C^{k,\alpha}$, this is only possible if $u''=0$.
\end{proof}

\section{Construction of $k_0$-stationary discs}

Consider a model hypersurface $S= \{\rho = 0\}$  of type $d$ 
$$\rho(z,w)=- \real w + P(z,\overline z)=- \real w + \sum_{j=d-k_0}^{k_0} \alpha_{j} z^j \overline{z}^{d-j}.$$
Recall that in view of Example \ref{exmodel}, we consider the $k_0$-stationary disc attached to $S$ given by 
$f_0=(h_0,g_0)=(1-\zeta,g_0)$, with the coefficient function $c_0\equiv 1$. We fix $\delta>0$ such 
that $f_0(\Delta) \subset (2\delta\Delta)^2$. The aim of this section is to construct a finite dimensional manifold of lifts of $k_0$-stationary 
discs attached to small perturbations of the given model hypersurface. 
 
Let $k>0$ be an integer and let $0<\alpha<1$.  
Denote by $X$ the (affine) Banach space parametrizing the deformations that one consider to the model $S$. We define $X$ as the set of functions $r\in \mathcal  C^{k+3}\left(\overline {\delta\Delta}^2\right)$  which can be written as
\begin{equation}\label{eqdef3}
r(z,w) = \rho(z,w) +\theta(z,\imag w ),
\end{equation}
with 
$$\theta(z,\imag w )= \sum_{i+j=d+1} (z^i \overline z^j) \cdot r_{ij0}(z)+
\sum_{l=1}^{d-1}\sum_{i+j=d-l} z^i \overline z^j (\imag w)^l \cdot r_{ijl}(z,\imag w)+\theta_1(\imag w)$$ 
where $r_{ij0} \in  \mathcal C^{k+3}_\C\left(\overline{\delta\Delta}\right), 
r_{ijl}\in \mathcal  C^{k+3}_\C\left(\overline{\delta\Delta}\times[-\delta,\delta]\right)$, 
  $\theta_1 \in \mathcal C^{k+3}\left([-\delta,\delta]\right)$,
and where  $\theta_1(\imag w)=O(|\imag w|^2)$. Furthermore, we will consider the norm 
$$\|r\|_X = \sup \|r_{ijl}\|_{\mathcal C^{k+3}}+\|\theta_1\|_{\mathcal C^{k+3}},$$
 so that $X$ is isomorphic to a (real) closed subspace of 
$\mathcal  C^{k+3}_\C\left(\overline{\delta\Delta}\times [-\delta,\delta]\right)$, hence it is a Banach space. The inclusion of $X$ into 
$\mathcal  C^{k+3}\left(\overline {\delta\Delta}\times[-\delta,\delta]\right)$ is an (affine) linear bounded map, and 
in particular it is of class $\mathcal C^1$.
We define the set  
$$Y = \mathcal C^{k,\alpha} \times (1-\zeta)\mathcal A^{k,\alpha} \times 
(1-\zeta)\mathcal A^{k,\alpha},$$
and we equip it with the following norm 
$$\|(c,h,g)\|_Y=\|c\|_{\mathcal{C}^{k,\alpha}}+\|h\|_{(1-\zeta)\mathcal A^{k,\alpha}}+
\|g\|_{(1-\zeta)\mathcal A^{k,\alpha}}.$$ 

Theorem \ref{theo1} is a consequence of  the following more precise result: 
\begin{theorem}\label{theodiscs}
Let $S=\{\rho=0\}$ be a model hypersurface of finite type $d$. Then for any integer $k\geq 0$ and 
$0<\alpha<1$, there exist some open 
neighborhoods $V$ of $\rho$ in $X$  and $U$ of $0$  in $\R^{4k_0-d+3}$, $\eta>0$, and a map
$\mathcal{F}:V \times U \to  Y$
 of class $\mathcal{C}^1$, such that:
\begin{enumerate}[i)]
\item $\mathcal{F}(\rho,0)=(1,f_0)$,
\item for all $r\in V$, the map $\mathcal{F}(r,\cdot):U\to \{(c,f) \in Y
\ | \ (c,f) \in \mathcal{S}_{0}^r,\ \|(c,f)-(1,f_0)\|_Y<\eta\}$ 
is one-to-one and onto.
\end{enumerate}
\end{theorem}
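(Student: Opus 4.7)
The plan is to set up a non-linear operator whose zero set is precisely the lifts of $k_0$-stationary discs tied to the origin, and to apply the implicit function theorem at the base point $(\rho,1,f_0)$. Define
\[\Phi \colon V \times Y \to Z := \mathcal C_0^{k,\alpha}\times \overline{\zeta\mathcal A}^{k,\alpha}\times \overline{\zeta\mathcal A}^{k,\alpha}\]
by
\[\Phi(r;c,h,g) = \bigl(r(h,g),\ \mathcal P(\zeta^{k_0}c\,\partial_z r(h,g)),\ \mathcal P(\zeta^{k_0}c\,\partial_w r(h,g))\bigr).\]
Since $\mathcal P u=0$ characterizes holomorphic extendability and elements of $(1-\zeta)\mathcal A^{k,\alpha}$ vanish at $\zeta=1$, a zero of $\Phi(r,\cdot)$ is precisely a lift $(c,f)\in\mathcal S^r_0$. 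Example \ref{exmodel} gives $\Phi(\rho,1,f_0)=0$, and $\Phi$ is of class $\mathcal C^1$: the shape (\ref{eqdef3}) together with $h(1)=g(1)=0$ forces $r(h,g)\in\mathcal C_0^{k,\alpha}$; $\mathcal P$ is bounded by Privalov's theorem; and $X\hookrightarrow\mathcal C^{k+3}$ is bounded linear.

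I would then analyze $D:=D_{(c,h,g)}\Phi(\rho,1,f_0)$. Using $\rho_w=\rho_{\bar w}=-\tfrac12$ and the vanishing of all second derivatives of $\rho$ involving $w$ or $\bar w$,
\begin{align*}
D_1(\dot c,\dot h,\dot g) &= 2\real\bigl(P_z(h_0,\overline{h_0})\dot h\bigr) - \real\dot g,\\
D_2(\dot c,\dot h,\dot g) &= \mathcal P\bigl(\zeta^{k_0}\dot c\,P_z(h_0,\overline{h_0})\bigr) + \mathcal P\bigl(\zeta^{k_0}[P_{zz}(h_0,\overline{h_0})\dot h + P_{z\bar z}(h_0,\overline{h_0})\overline{\dot h}]\bigr),\\
D_3(\dot c,\dot h,\dot g) &= -\tfrac12\,\mathcal P(\zeta^{k_0}\dot c).
\end{align*}
By Lemma \ref{lemc}, $D_3=0$ restricts $\dot c$ to a real $(2k_0+1)$-dimensional subspace of shape (\ref{eqc}). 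The equation $D_1=0$ determines $\real\dot g$ on $b\Delta$ from $\dot h$, and since $\dot g\in(1-\zeta)\mathcal A^{k,\alpha}$ is holomorphic with $\dot g(1)=0$, the Hilbert transform recovers $\dot g$ uniquely; the required $\real\dot g(1)=0$ will follow from the factor $(1-\zeta)^{d-1}$ appearing in $P_z(h_0,\overline{h_0})$ below.

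The boundary identity $\overline{h_0}=-\zeta^{-1}h_0$ yields the factorizations (on $b\Delta$)
\[\zeta^{k_0}P_z(h_0,\overline{h_0})=(1-\zeta)^{d-1}Q_3,\quad \zeta^{k_0}P_{zz}(h_0,\overline{h_0})=(1-\zeta)^{d-2}Q_1,\quad \zeta^{k_0}P_{z\bar z}(h_0,\overline{h_0})=(1-\zeta)^{d-2}\zeta\widetilde Q_2,\]
with $Q_1,Q_3,\widetilde Q_2$ holomorphic polynomials of degree $2k_0-d$. Writing $\dot h=(1-\zeta)\tilde h$, so that $\overline{\dot h}=-\zeta^{-1}(1-\zeta)\overline{\tilde h}$ on $b\Delta$, the equation $D_2=0$ becomes
\[\mathcal P\bigl((1-\zeta)^{d-1}\bigl[Q_3\dot c + Q_1\tilde h - \widetilde Q_2\overline{\tilde h}\bigr]\bigr)=0,\]
which by Lemma \ref{Pu} together with the holomorphy of $Q_1\tilde h$ reduces to
\[\mathcal P(\widetilde Q_2\,\overline{\tilde h})=\mathcal P(Q_3\,\dot c).\]
A direct phase computation, using that $P_{z\bar z}(h_0,\overline{h_0})\geq 0$ on $b\Delta$ (subharmonicity of $P$) with unique zero at $\zeta=1$ absorbed into $(1-\zeta)^{d-2}$, yields $\mathrm{wind}_{b\Delta}(\widetilde Q_2)=k_0-d/2$. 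An extension of Lemma \ref{lemfred} to polynomials nonvanishing on $b\Delta$ (via homotopy invariance of the Fredholm index) then gives that $\mathcal P(\widetilde Q_2\,\cdot)\colon \overline{\mathcal A}^{k,\alpha}\to\overline{\zeta\mathcal A}^{k,\alpha}$ is a surjective Fredholm operator with complex kernel dimension $k_0-d/2+1$. Collecting,
\[\dim_\R \ker D = (2k_0+1) + 2(k_0-d/2+1) = 4k_0-d+3,\]
and $D$ is surjective onto $Z$.

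To conclude, I would pick a closed complement $Y'$ of $\ker D$ in $Y$ so that $D|_{Y'}\colon Y'\to Z$ is a Banach space isomorphism, and then apply the implicit function theorem to the $\mathcal C^1$ map $(r,u,y')\mapsto\Phi(r,(1,f_0)+u+y')$ at $(\rho,0,0)\in X\times\ker D\times Y'$. This produces a $\mathcal C^1$ function $y'(r,u)$ defined near $(\rho,0)$, uniquely solving $\Phi(r,(1,f_0)+u+y'(r,u))=0$ for $y'$ near $0$ in $Y'$. Setting $\mathcal F(r,u):=(1,f_0)+u+y'(r,u)$ and identifying $\ker D\simeq\R^{4k_0-d+3}$ produces the required parametrization. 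The main obstacle is the winding calculation $\mathrm{wind}(\widetilde Q_2)=k_0-d/2$, which uses the subharmonicity of $P$ and the non-degeneracy $\{P_{z\bar z}=0\}=\{0\}$ in an essential way, together with the extension of Lemma \ref{lemfred} via homotopy invariance of the Fredholm index to symbols whose zeros need not all lie in $\Delta$.
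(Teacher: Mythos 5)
Your overall architecture (set up the operator $\bigl(r(h,g),\,\mathcal P(\zeta^{k_0}c\,\partial_z r),\,\mathcal P(\zeta^{k_0}c\,\partial_w r)\bigr)$, compute the kernel of its linearization at $(\rho,1,f_0)$, and apply the implicit function theorem) is the same as the paper's, and your kernel count $\dim_\R\ker D=(2k_0+1)+2(k_0-d/2+1)=4k_0-d+3$ agrees with Lemma \ref{findim}. But there is a genuine gap at the crucial step: the linearization of the \emph{unmodified} operator is not surjective onto $\overline{\zeta\mathcal A}^{k,\alpha}$ in the $\partial_z$-component, and the paper says so explicitly before introducing $T_1'$. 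Your $D_2$ has the form $\mathcal P\bigl((1-\zeta)^{d-1}\Psi(\dot c,\tilde h)\bigr)$, so its image is contained in the image of the map $u\mapsto\mathcal P((1-\zeta)^{d-1}u)$. By Lemma \ref{Pu} this map is injective on $\overline{\zeta\mathcal A}^{k,\alpha}$, but it is not surjective: inverting it amounts to dividing by $(1-\zeta)^{d-1}$, which vanishes on $b\Delta$, so its image is a proper, non-closed subspace (concretely, solving $\mathcal P((1-\zeta)u)=w$ for the negative Fourier modes forces $u_n=\sum_{m\le n}w_m$, which in general loses regularity). Lemma \ref{Pu} therefore lets you identify the \emph{kernel} of $D_2$ with that of $\overline{\tilde h}\mapsto\mathcal P(\widetilde Q_2\,\overline{\tilde h})$, but it does not transfer surjectivity back; your sentence ``and $D$ is surjective onto $Z$'' is exactly the claim that fails, and the implicit function theorem cannot be applied as stated. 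The paper's fix is to replace $T_1$ by $T_1'=\mathcal P\bigl(\tfrac{\zeta^{k_0}}{(1-\zeta)^{d-1}s(\zeta)}\,c\,\partial_z r(h,g)\bigr)$ --- well defined because the structure of the deformation space $X$ forces $\partial_z r(h,g)\in(1-\zeta)^{d-1}\mathcal C^{k,\alpha}_\C$ --- which has the same zero set but a genuinely surjective linearization.

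A secondary issue: even after removing $(1-\zeta)^{d-1}$, your symbol $\widetilde Q_2$ is (up to constants) $s(\zeta)t(\zeta)$, where $s$ has all its zeros \emph{outside} $\Delta$. Lemma \ref{lemfred} as stated covers only symbols whose zeros lie inside $\Delta$, and its proof relies on an explicit inversion that uses this. Your proposed ``extension to polynomials nonvanishing on $b\Delta$'' is plausible (the Fredholm index is the right number, and a Coburn-type argument gives vanishing of the cokernel in $H^2$), but it is not in the paper and would need a separate proof in the H\"older category. The paper sidesteps this as well by dividing out $s$ inside the projection: since $s$ and $1/s$ both extend holomorphically to $\overline\Delta$, this changes neither the zero set nor the regularity, and leaves the symbol $t$, to which Lemma \ref{lemfred} applies verbatim.
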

In order to prove Theorem \ref{theodiscs}, we need to consider the zero set of the map $T=(T_1,T_2, T_3)$
\[T: X\times Y \to (\overline{\zeta\mathcal A}^{k,\alpha})^2 \times \mathcal C_0^{k,\alpha},\]
defined by
$$
\left\{
\begin{array}{lll}
T_1(r,c,h,g) =\mathcal P \left(\zeta^{k_0} c \frac{\partial r}{\partial z}(h,g) \right)\\
\\
T_2(r,c,h,g) =\mathcal P \left(\zeta^{k_0} c \frac{\partial r}{\partial w}(h,g) \right)\\
\\
T_3(r,c,h,g) = r(h,g)
\end{array}
\right.
$$
in a neighborhood of $(\rho, c_0, h_0, g_0)$. For any fixed $r\in X$, the zero set of $T(r,\cdot)$ is the set  $\mathcal{S}_{0}^r$ of $k_0$-stationary discs attached to $\{r=0\}$ and tied to the origin.

Although the map $T$ is of class $\mathcal C^1$, its derivative with respect to $Y$ is, in general, not
surjective. Our aim is then to replace it with another 
map $T'$ having the same zero set as 
$T$ and whose derivative with respect to $Y$ is surjective, hence allowing the application of the implicit function theorem.

We first  define a polynomial $Q$ in such a way that 
\begin{equation}\label{eqQ}
\zeta^{k_0}P_{z \overline z}(1-\zeta, 1-\overline \zeta) = (\zeta - 1)^{d-2}Q(\zeta)
\end{equation} 
for all $\zeta\in b\Delta.$ The polynomial $Q$ can be explicitly computed: 
$$
\begin{array}{lll}
\zeta^{k_0}P_{z\overline z}(1-\zeta,1-\overline\zeta) &= & \sum_{j=(d-k_0)}^{k_0} \gamma_j(1- \zeta)^{j-1} \zeta^{k_0}(1- \overline\zeta)^{d-j-1}\\
\\
&= &  ( \zeta - 1)^{d-2}\sum_{j=d-k_0}^{k_0}(-1)^{j-1}\gamma_j  \zeta^{k_0 + j + 1 - d}\\
\\
& = &( \zeta - 1)^{d-2}Q(\zeta)
\end{array}
$$
where $\gamma_j = j(d-j)\alpha_{j}$. The assumption made on $P$ implies that $Q$ does not have any roots on the unit circle 
$b\Delta$; also note that $Q$ has always one root of multiplicity one at the origin. 
Denote by $q_j$, $1\leq j \leq  2k_0 + 1 - d$, the roots of $Q$, 
and suppose that $q_1, \ldots, q_{i_0}$ are the ones lying outside of $\Delta$ and $q_{2k_0 + 1 - d} = 0$. We set $s(\zeta)= \prod_{1\leq j \leq i_0} (q_j - \zeta)$, $t(\zeta) = \prod_{i_0 + 1\leq j \leq 2k_0 - d} (q_j - \zeta)$, and write
 \begin{equation}\label{eqQ2}
Q(\zeta) = C\zeta s(\zeta)t(\zeta)
\end{equation} 
for some constant $C\in \C$. Put $\ell_0 = 2k_0 - i_0 - d$, so that $Q(\zeta)$ admits exactly $\ell_0 + 1$ roots inside the unit disc and $i_0$ roots outside.

\begin{lemma} We have $\ell_0 = i_0= k_0 - d/2$.
\end{lemma}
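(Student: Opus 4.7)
The first observation is that the relation $\ell_0 + i_0 = 2k_0 - d$ is just a degree count: from (\ref{eqQ}) the polynomial
$$Q(\zeta) = \sum_{j=d-k_0}^{k_0}(-1)^{j-1}\gamma_j \,\zeta^{k_0+j+1-d}$$
has degree exactly $2k_0+1-d$ (since $\gamma_{k_0}=k_0(d-k_0)\alpha_{k_0}\neq 0$), vanishes simply at $\zeta=0$, and has no root on $b\Delta$ by the standing assumption $\{P_{z\overline z}=0\}=\{0\}$. Thus the real content of the lemma is the equality $\ell_0 = i_0$, and the plan is to exhibit a conjugate-reciprocal symmetry of $Q/\zeta$ that forces its $2k_0-d$ nonzero roots to split evenly between the inside and the outside of $\Delta$.

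To this end, denote by $a_m$ the coefficient of $\zeta^m$ in $Q$, so that $a_m = (-1)^{j-1}\gamma_j$ with $j = m+d-k_0-1$. The Hermitian condition $\alpha_j = \overline{\alpha_{d-j}}$ on $P$ immediately gives $\gamma_{d-j}=\overline{\gamma_j}$. Combining this with the fact that $d$ is even (recalled right after (\ref{eqdef})), so that $(-1)^{d-j-1}=(-1)^{j-1}$, a direct substitution $m\mapsto 2k_0+2-d-m$ in the expression for $a_m$ yields
$$a_{2k_0+2-d-m} = \overline{a_m}, \qquad 1\leq m \leq 2k_0+1-d.$$
Factoring out the simple root at $0$ by writing $Q(\zeta)=\zeta\widetilde Q(\zeta)$, this identity is equivalent to the self-reciprocal relation
$$\widetilde Q(\zeta) = \zeta^{2k_0-d}\,\overline{\widetilde Q(1/\overline\zeta)}.$$

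From this relation one reads off that the roots of $\widetilde Q$ come in pairs $\{q,\,1/\overline q\}$. Since $\widetilde Q(0) = a_1 \neq 0$ (using once more that $\alpha_{d-k_0} = \overline{\alpha_{k_0}}\neq 0$) and $\widetilde Q$ has no zero on $b\Delta$, none of these pairs is degenerate: each consists of one root strictly inside and one root strictly outside of $\Delta$. Therefore the $2k_0-d$ roots of $\widetilde Q$ split evenly, and since the nonzero roots of $Q$ inside $\Delta$ are counted by $\ell_0$, those outside by $i_0$, we conclude $\ell_0 = i_0 = (2k_0-d)/2 = k_0 - d/2$.

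The only delicate step in this plan is the sign/parity bookkeeping that produces the conjugate-reciprocal identity for $\widetilde Q$; it crucially exploits the fact that $d$ is even. Once this identity is in place, the conclusion follows from the standard symmetry properties of self-reciprocal polynomials.
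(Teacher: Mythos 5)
Your proof is correct, but it takes a genuinely different route from the paper. The paper proves the lemma by a deformation argument: it observes that the set $P_{k_0,d}$ of admissible polynomials is a convex cone, hence connected, that the coefficients of $Q_{P'}$ depend continuously on $P'$ and that $Q_{P'}$ never vanishes on $b\Delta$, so that by the argument principle the number of roots inside $\Delta$ is constant along the homotopy; it then computes this number explicitly for the model $P'=|z|^d$, where $Q_{P'}$ is a monomial $(-1)^{d/2-1}\tfrac{d^2}{4}\zeta^{k_0+1-d/2}$. You instead extract an algebraic symmetry: the Hermitian condition $\alpha_j=\overline{\alpha_{d-j}}$ together with the parity of $d$ gives $a_{2k_0+2-d-m}=\overline{a_m}$, so that $\widetilde Q=Q/\zeta$ is conjugate-reciprocal, its root multiset is invariant under the involution $q\mapsto 1/\overline q$ (which preserves multiplicities, as one sees by comparing the factorizations of $\widetilde Q(\zeta)$ and $\zeta^{N}\overline{\widetilde Q(1/\overline\zeta)}$ --- a point worth making explicit), and since $\widetilde Q(0)\neq 0$ and $\widetilde Q$ has no root on $b\Delta$ the roots split evenly between the inside and the outside. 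Your argument is self-contained and makes transparent \emph{why} the split is even (it is forced by the reality of $P$), avoiding both the connectedness discussion and the explicit model computation; the paper's homotopy argument is more robust in that it only uses continuity of the root count and would survive perturbations that break the exact symmetry while preserving the homotopy class. Both correctly use the hypotheses $\alpha_{k_0}\neq 0$, $k_0\leq d-1$ (so that $\deg Q=2k_0+1-d$ and the root at $0$ is simple) and $\{P_{z\overline z}=0\}=\{0\}$ (so that $Q$ has no root on $b\Delta$).
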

\begin{proof}
For the given $k_0\in \mathbb N$, consider the set $P_{k_0,d}$ of the real polynomials $P'(z,\overline z)$ such that 
\[P'(z,\overline z)=\sum_{j=d-k_0}^{k_0} \alpha'_{j} z^j \overline{z}^{d-j},  \ \ \alpha'_{j} = \overline \alpha'_{d-j}\]
and such that $P'_{z\overline z}$ vanishes exactly at $0$ and is positive elsewhere. For each $P'\in P_{k_0,d}$, denote by $Q_{P'}$ the polynomial defined above. The computation performed before the lemma shows that the coefficients of $Q_{P'}$ depend continuously on those of $P'$, and that $Q_{P'}$ never vanishes on the unit circle. Thus, the argument principle implies that the number of zeroes of $Q_{P'}$ lying inside the unit disc is constant on any connected component of $P_{k_0,d}$. On the other hand, it is easy to check that $P_{k_0,d}$ is a convex cone in the space of the real polynomials of degree $d$, hence it is connected.

Choosing now $P'(z,\overline z) = |z|^d$, we obtain $Q_{P'}(\zeta) = (-1)^{\frac{d}{2} - 1} \frac{d^2}{4} \zeta^{k_0 + 1 - \frac{d}{2}}$, which vanishes at $z=0$ with multiplicity $k_0 + 1 - d/2$. It follows that the polynomial $P$ also admits exactly $k_0 + 1 - d/2$ zeroes inside the unit disc, hence $\ell_0 = k_0 - d/2$ as claimed.
\end{proof}

We now define the map $T' = (T_1',T_2', T_3')$ by putting $T_2' = T_2$, $T_3'= T_3$ and
\[T_1'(r,c,h,g) =\mathcal P \left(\frac{\zeta^{k_0}}{(1-\zeta)^{d-1} s(\zeta)}\cdot c \frac{\partial r}{\partial z}(h,g) \right).\]
The map $T_1'$  is well-defined because the choice of the space $X$ implies 
$\frac{\partial r}{\partial z}(h,g)\in (1-\zeta)^{d-1}\mathcal C_\C^{k,\alpha}$ for all $r\in X$ and $h,g\in 
(1-\zeta)\mathcal A^{k,\alpha}$. 
The fact that the zero set of $T_1'$ is the same as the one of $T_1$ follows from Lemma \ref{Pu} and 
from the fact that both $s(\zeta)= \prod_{1\leq j \leq i_0} (q_j - \zeta)$ and its inverse extend 
holomorphically to the unit disc, which implies that $s \cdot u \in \mathcal A^{k,\alpha}$ if and only if $ 
u \in \mathcal A^{k,\alpha}$.

\begin{lemma}
 The map $T'$ is of class $\mathcal C^1$.
\end{lemma}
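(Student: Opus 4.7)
The plan is to verify the $\mathcal{C}^1$-regularity of each component $T_1'$, $T_2'$, $T_3'$ separately. For all three, the common ingredients are: (a) the Nemytskii-type composition operator $(R,u) \mapsto R\circ u$, from $\mathcal{C}^{k+3}(\overline{\delta\Delta}\times[-\delta,\delta]) \times \mathcal{C}_\mathbb{C}^{k,\alpha} \to \mathcal{C}_\mathbb{C}^{k,\alpha}$, which is of class $\mathcal{C}^1$ because the outer function has three derivatives more than the target space requires; (b) pointwise multiplication in $\mathcal{C}^{k,\alpha}$, which is continuous bilinear, hence smooth; (c) boundedness of the projection $\mathcal{P}$, already established in Section 2.

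For $T_2'=T_2$, the combination of (a)--(c) is immediate: evaluate $\partial r/\partial w$ at $(h,g)$, multiply by $\zeta^{k_0}c$, and project. For $T_3'=T_3$ one must in addition check that $r(h,g)$ lies in $\mathcal{C}_0^{k,\alpha}=(1-\zeta)\mathcal{C}_\mathbb{C}^{k,\alpha}$ and depends $\mathcal{C}^1$-smoothly on $(r,h,g)$ with respect to the factored norm. To this end, write $h=(1-\zeta)\tilde h$, $g=(1-\zeta)\tilde g$ and use the identity $1-\bar\zeta = -\bar\zeta(1-\zeta)$ on $b\Delta$: one finds $\real g = \tfrac{1-\zeta}{2}(\tilde g-\bar\zeta\overline{\tilde g})$, each monomial of $P(h,\bar h)$ carries a factor $(1-\zeta)^d$, and every term in $\theta(h,\imag g)$ carries at least $(1-\zeta)^2$ thanks to the form (\ref{eqdef3}) of elements of $X$ together with $\theta_1(\imag w)=O((\imag w)^2)$. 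This yields an explicit factorization $r(h,g)=(1-\zeta)\tilde R(r,h,g)$ with $\tilde R$ of class $\mathcal{C}^1$.

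The only component requiring more care is $T_1'$, due to the apparently singular factor $1/(1-\zeta)^{d-1}$. The key step is to exhibit an explicit factorization
\[c\cdot \frac{\partial r}{\partial z}(h,g) = (1-\zeta)^{d-1}\, G(r,c,h,g),\]
with $G : X\times\mathcal{C}^{k,\alpha}\times((1-\zeta)\mathcal{A}^{k,\alpha})^2 \to \mathcal{C}_\mathbb{C}^{k,\alpha}$ of class $\mathcal{C}^1$. Each monomial of $P_z$ has total degree $d-1$ in $(z,\bar z)$, so substituting $h=(1-\zeta)\tilde h$ and $\bar h = -\bar\zeta(1-\zeta)\overline{\tilde h}$ factors out exactly $(1-\zeta)^{d-1}$, leaving a polynomial in $\tilde h, \overline{\tilde h}, \bar\zeta$; the contribution from $\theta_z(h,\imag g)$ yields higher powers of $(1-\zeta)$, again by the specific form of (\ref{eqdef3}). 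With $G$ in hand one rewrites
\[T_1'(r,c,h,g)=\mathcal{P}\!\left(\frac{\zeta^{k_0}}{s(\zeta)}\, G(r,c,h,g)\right).\]
Since the roots of $s$ all lie outside $\overline\Delta$, the multiplier $\zeta^{k_0}/s(\zeta)$ is smooth on $\overline\Delta$ and multiplication by it is a bounded operator on $\mathcal{C}_\mathbb{C}^{k,\alpha}$. Combined with continuity of $\mathcal{P}$ and $\mathcal{C}^1$-regularity of $G$, this proves $T_1'$ is of class $\mathcal{C}^1$.

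The main obstacle is to perform the factorization by $(1-\zeta)^{d-1}$ in a way that simultaneously preserves Fréchet differentiability of the quotient $G$ in all variables; once this bookkeeping is carried out along the lines sketched above, the rest reduces to standard Banach-space calculus together with the boundedness results recalled in Section 2.
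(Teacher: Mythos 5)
Your proof is correct and follows essentially the same route as the paper's: reduce to the unprojected map via boundedness of $\mathcal P$, invoke $\mathcal C^1$-smoothness of the composition operator (the paper cites Lemma 5.1 of Hill--Taiani for what you state as a Nemytskii-operator fact), and factor $(1-\zeta)^{d-1}$ out of $c\,\partial r/\partial z(h,g)$ using the homogeneity of $P_z$, the identity $1-\overline\zeta=-\overline\zeta(1-\zeta)$, and the special form of $\theta$. The only cosmetic difference is that you spell out the $T_3'$ case, which the paper omits as analogous.
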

\begin{proof}
Since $\mathcal P:\mathcal C_\C^{k,\alpha}\to \overline{\zeta\mathcal A}^{k,\alpha}$ is a bounded linear operator, it is enough to 
show that the following map 
\[\tilde{T'}=(\tilde{T_1'},\tilde{T_2'}, T_3'): X\times Y \to (\mathcal{C}_\C^{k,\alpha})^2 \times \mathcal C_0^{k,\alpha},\]
defined by
$$
\left\{
\begin{array}{lll}
\tilde{T_1'}(r,c,h,g) = \frac{\zeta^{k_0}}{(1-\zeta)^{d-1} s(\zeta)}\cdot c \frac{\partial r}{\partial z}(h,g)\\
\\
\tilde{T_2'}(r,c,h,g) =\zeta^{k_0} c \frac{\partial r}{\partial w}(h,g)\\
\\
T_3'(r,c,h,g) = r(h,g)
\end{array}
\right.
$$
is of class $\mathcal{C}^1$.

According to Lemma 5.1 in \cite{hi-ta}, the map  $\tilde{T_2'}$ is of class $\mathcal{C}^1$ when considered as a map from 
 $X\times \mathcal C^{k,\alpha} \times \mathcal A^{k,\alpha} \times \mathcal A^{k,\alpha}$. 
 Then it follows from the boundedness of the inclusion of $(1-\zeta)\mathcal A^{k,\alpha}$ into 
$\mathcal A^{k,\alpha}$ noticed    
 in Subsection $2.1$ that  
  $\tilde{T_2'}: X\times Y \to \mathcal{C}_\C^{k,\alpha}$  
 is of class $\mathcal{C}^1$. 
 
To treat the smoothness of $\tilde T_1'$, we define, for every $i,j$ such that $i+j=d+1$, maps $S_{ij0}, U_{ij0}: 
X \to \mathcal C_{\C}^{k+2}(\overline{\delta\Delta}^2)$ of class $\mathcal C^1$ in the following way:
\[(S_{ij0}(r))(z,z') = iz^{i-1} \overline z^j \cdot r_{ij0}(z'),\] 
\[(U_{ij0}(r))(z,z') = z^i \overline z^j \cdot \frac{\partial r_{ij0}}{\partial z}(z').\]
We also define, for every $i,j$ and $1\leq l\leq d-1$  satisfying $i+j+l=d$, maps $S_{ijl}, U_{ijl}: 
X \to \mathcal C_{\C}^{k+2}(\overline{\delta\Delta}^3\times[-\delta,\delta])$ of class $\mathcal C^1$ by:
\[(S_{ijl}(r))(z,w, z', \imag w') = iz^{i-1} \overline z^j\left(w+\overline{\zeta w}\right)^l \cdot r_{ijl}(z',\imag w'),\] 
\[(V_{ijl}(r))(z,w,z', \imag w') = z^i \overline z^j \left(w+\overline{\zeta w}\right)^l  \cdot \frac{\partial r_{ijl}}{\partial z}(z',\imag w').\]
Posing $h = (1-\zeta)\tilde{h}$, $g = (1-\zeta)\tilde{g}$, a straightforward computation gives
$$
\begin{array}{lll}
\tilde T_1'(r,c,h,g) & = &  \frac{\zeta^{k_0}}{s(\zeta)}cP_z\left(\tilde{h},-\overline{\zeta\tilde{h}}\right) + 
\frac{\zeta^{k_0}(1-\zeta)}{s(\zeta)}c\sum_{i+j=d+1} \frac{1}{(-\zeta)^{j}}(S_{ij0}(r))(\tilde{h},h)\\
\\
& &+ \frac{\zeta^{k_0}(1-\zeta)^2}{s(\zeta)}c\sum_{i+j=d+1}  \frac{1}{(-\zeta)^{j}}(U_{ij0}(r))(\tilde{h},h)\\
\\
&& +\frac{\zeta^{k_0}}{s(\zeta)}c\sum_{l=1}^{d-1}\sum_{i+j=d-l} \frac{1}{(-\zeta)^{j}}(S_{ijl}(r))(\tilde{h},\tilde{g},h,\imag g)\\
\\
&& + \frac{\zeta^{k_0}(1-\zeta)}{s(\zeta)}c\sum_{l=1}^{d-1}\sum_{i+j=d-l}  \frac{1}{(-\zeta)^{j}}(U_{ijl}(r))(\tilde{h},\tilde{g},h, \imag g)\\
\end{array}
$$
Since, again by Lemma 5.1 in \cite{hi-ta} and the discussions made in Subsection $2.1$, the maps 
$X\times (1-\zeta)\mathcal A^{k,\alpha}\times (1-\zeta)\mathcal A^{k,\alpha}\to \mathcal C^{k,\alpha}_\C$ defined as 
$(r,h,g)\mapsto (S_{ij0}(r))(\tilde{h},h)$, $(r,h,g)\mapsto (U_{ij0}(r))(\tilde{h},h)$, 
$(r,h,g)\mapsto (S_{ijl}(r))(\tilde{h},\tilde{g},h,\imag g)$, $(r,h,g)\mapsto (U_{ijl}(r))(\tilde{h},\tilde{g},h,\imag g)$ 
are of class $\mathcal C^1$, it follows that $T_1'$ is in turn of class $\mathcal C^1$.

The proof that $T_3': X\times Y \rightarrow \mathcal C_0^{k,\alpha}$ is of class $\mathcal C^1$ is 
 analogous to (but simpler than) the proof for $\tilde T_1'$, hence we shall omit it.
 \end{proof}

We then show the following:
\begin{lemma}\label{surj}
The Banach space derivative $T'_Y$ is surjective at the point $p_0=(\rho, c_0, h_0, g_0)$.
\end{lemma}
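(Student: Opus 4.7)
The plan is to compute $L := dT'_Y|_{p_0}$ explicitly and then to exhibit, for any target $(v_1,v_2,v_3) \in (\overline{\zeta\mathcal A}^{k,\alpha})^2\times \mathcal C_0^{k,\alpha}$, a preimage in $Y$. At the base point $p_0 = (\rho,c_0,h_0,g_0)$ the derivative has a favorable triangular structure because $\partial\rho/\partial w\equiv-1/2$ and $\partial\rho/\partial z$ does not involve $w$: concretely, $L_1$ does not depend on $\dot g$, $L_2$ depends only on $\dot c$, and $L_3$ does not depend on $\dot c$. A direct computation gives
\begin{align*}
L_1(\dot c,\dot h)&=\mathcal P\!\left[\frac{\zeta^{k_0}}{(1-\zeta)^{d-1}s(\zeta)}\bigl(\dot c\,P_z(h_0,\overline{h_0})+P_{zz}(h_0,\overline{h_0})\dot h+P_{z\bar z}(h_0,\overline{h_0})\overline{\dot h}\bigr)\right],\\
L_2(\dot c)&=-\tfrac12\mathcal P(\zeta^{k_0}\dot c),\\
L_3(\dot h,\dot g)&=2\real\bigl(P_z(h_0,\overline{h_0})\dot h\bigr)-\real\dot g.
\end{align*}

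The decisive step is the algebraic simplification of $L_1$. Exploiting $h_0=1-\zeta$, the boundary identity $1-\bar\zeta=-(1-\zeta)/\zeta$, and the factorisations (\ref{eqQ}), (\ref{eqQ2}), one obtains $P_z(h_0,\overline{h_0})=(1-\zeta)^{d-1}A(\zeta)$, $P_{zz}(h_0,\overline{h_0})=(1-\zeta)^{d-2}B(\zeta)$ and $P_{z\bar z}(h_0,\overline{h_0})=C(1-\zeta)^{d-2}s(\zeta)t(\zeta)/\zeta^{k_0-1}$, where $\zeta^{k_0}A$ and $\zeta^{k_0}B$ are polynomials of degree $2k_0-d$. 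Writing $\dot h=(1-\zeta)\tilde{\dot h}$ with $\tilde{\dot h}\in\mathcal A^{k,\alpha}$, the $P_{zz}$-contribution to $L_1$ becomes $\zeta^{k_0}B(\zeta)\tilde{\dot h}/s(\zeta)$, which lies in $\mathcal A^{k,\alpha}$ (since $s$ does not vanish on $\overline\Delta$) and is therefore killed by $\mathcal P$, while the $P_{z\bar z}$-contribution collapses to $-Ct(\zeta)\overline{\tilde{\dot h}}$. With $\varphi:=\zeta^{k_0}A/s\in\mathcal A^{k,\alpha}$ this yields the clean expression
\[L_1(\dot c,\dot h)=\mathcal P(\varphi\,\dot c)-C\,\mathcal P\bigl(t\,\overline{\tilde{\dot h}}\bigr).\]

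Surjectivity then follows in three successive steps. First, a Fourier argument solves $L_2(\dot c)=v_2$: after imposing the reality condition $c_{-n}=\overline{c_n}$, the modes $|n|>k_0$ of $\dot c$ are uniquely determined by $v_2$, leaving the $2k_0+1$ modes with $|n|\le k_0$ free. Second, with $\dot c$ fixed, the equation $L_1(\dot c,\dot h)=v_1$ becomes $\mathcal P(t\,\overline{\tilde{\dot h}})=-C^{-1}(v_1-\mathcal P(\varphi\dot c))$; since $t$ is a holomorphic polynomial with all zeros in $\Delta$, Lemma \ref{lemfred} applied to $\mathcal P(t\,\cdot):\overline{\mathcal A}^{k,\alpha}\to\overline{\zeta\mathcal A}^{k,\alpha}$ (combined with the anti-linear isomorphism $\tilde{\dot h}\mapsto\overline{\tilde{\dot h}}$) produces a solution $\tilde{\dot h}$ with a $2(\ell_0+1)$-dimensional real kernel. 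Third, with $\dot h$ determined, $L_3(\dot h,\dot g)=v_3$ reduces to $\real\dot g=2\real(P_z(h_0,\overline{h_0})\dot h)-v_3$; since $P_z(h_0,\overline{h_0})\dot h$ is divisible by $(1-\zeta)^d$ its real part lies in $\mathcal C_0^{k,\alpha}$, and combined with the hypothesis $v_3\in\mathcal C_0^{k,\alpha}$ the right-hand side lies in $\mathcal C_0^{k,\alpha}$; the Schwarz integral formula together with Privalov's theorem then provide a unique $\dot g\in(1-\zeta)\mathcal A^{k,\alpha}$ with $\dot g(1)=0$.

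The hard part is the algebraic simplification of $L_1$: one must exploit the specific combinatorics of the model polynomial $P$ via identities (\ref{eqQ}) and (\ref{eqQ2}) to recognise that, at the explicit base disc $h_0=1-\zeta$, the apparently unwieldy expression collapses into the sum of a Toeplitz-type term $\mathcal P(\varphi\,\dot c)$ (controlled by the finite-dimensional freedom in $\dot c$) and a Hankel-type term $\mathcal P(t\,\overline{\tilde{\dot h}})$ to which Lemma \ref{lemfred} directly applies. As a consistency check, the total number of free real parameters equals $(2k_0+1)+2(\ell_0+1)=4k_0-d+3$, matching the dimension of the parameter set $U$ in Theorem \ref{theodiscs}.
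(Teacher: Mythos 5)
Your proposal is correct and follows essentially the same route as the paper: the same triangular reduction, the same Fourier argument for the $\dot c$-direction of $T_2'$, the same observation that the $P_{zz}$-term is annihilated by $\mathcal P$ while the $P_{z\overline z}$-term collapses (via (\ref{eqQ}) and (\ref{eqQ2})) to a constant multiple of $\mathcal P\bigl(t\,\overline{\tilde{\dot h}}\bigr)$ handled by Lemma \ref{lemfred}, and the same harmonic-extension/Szeg\H{o}-projection argument for the $\dot g$-direction of $T_3'$.
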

\begin{proof}
Choosing any $(c',h',g')\in Y$, for any $p=(r,c,h,g)$ we can write 
$$T'_Y(p)[c',h',g'] = \left((T'_1)_Y(p)[c',h',g'], (T'_2)_Y(p)[c',h',g'], (T'_3)_Y(p)[c',h',g']\right),$$ where

$$
\left\{
\begin{array}{lll}
(T'_1)_Y(p)[c',h',g'] &= &\mathcal P \left(\frac{\zeta^{k_0}}{s(\zeta)(1-\zeta)^{d-1}}\cdot \left (c' \frac{\partial r}{\partial z}(h,g) + c h' \frac{\partial^2 r}{\partial z^2}(h,g) + c\overline h' \frac{\partial^2 r}{\partial z \partial \overline z}(h,g) \right) \right)\\
\\
&& + \mathcal P \left(\frac{\zeta^{k_0}}{s(\zeta)(1-\zeta)^{d-1}}\cdot \left (c g' \frac{\partial^2 r}{\partial z \partial w}(h,g) + c\overline g' \frac{\partial^2 r}{\partial z \partial \overline w}(h,g) \right) \right),\\
\\
(T'_2)_Y(p)[c',h',g'] &= & \mathcal P \left(\zeta^{k_0}\cdot \left (c' \frac{\partial r}{\partial w}(h,g) + c h' \frac{\partial^2 r}{\partial w\partial z}(h,g) + c\overline h' \frac{\partial^2 r}{\partial w \partial \overline z}(h,g) \right) \right)\\
\\
&& + \mathcal P \left(\zeta^{k_0}\cdot \left (c g' \frac{\partial^2 r}{\partial w^2}(h,g) + c\overline g' \frac{\partial^2 r}{\partial w \partial \overline w}(h,g) \right) \right),\\
\\
(T'_3)_Y(p)[c',h',g'] &= & g' \frac{\partial r}{\partial w}(h,g) + \overline g'\frac{\partial r}{\partial \overline w}(h,g) + h'\frac{\partial r}{\partial z}(h,g) + \overline h'\frac{\partial r}{\partial \overline z}(h,g).
\end{array}
\right.
$$
At $p_0= (\rho, c_0, h_0, g_0)$ this becomes
$$
\left\{
\begin{array}{lll}
(T'_1)_Y(p_0)[c',h',g'] &=& \mathcal P \left(\frac{\zeta^{k_0}}{s(\zeta)(1-\zeta)^{d-1}} \cdot \left ( P_{z^2}(1-\zeta, 1-\overline\zeta)h' + P_{z\overline z}(1-\zeta, 1-\overline\zeta)\overline h' \right ) \right )\\
\\
&&+  \mathcal P \left(\frac{\zeta^{k_0}}{s(\zeta)(1-\zeta)^{d-1}} P_z(1-\zeta, 1-\overline\zeta) c'  \right ),\\
\\
(T'_2)_Y(p_0)[c',h',g']  &= & \mathcal P \left(-\frac{\zeta^{k_0}}{2}c' \right ),\\
\\
(T'_3)_Y(p_0)[c',h',g'] &= &-\frac{g'}{2} - \frac{\overline g'}{2} + P_z(1-\zeta, 1-\overline\zeta) h' +  P_{\overline z}(1-\zeta, 1-\overline\zeta) \overline h'.
\end{array}
\right.
$$
Because of the triangular form of the previous expressions we see that it is sufficient to show that the derivative of $T'_2$ in the direction of $c'$ is surjective (onto $\overline{\zeta\mathcal A}^{k,\alpha}$) and that the same is true for the derivatives of $T'_1$ in the $h'$ direction and of $T'_3$ in the $g'$ direction.

We first focus on $T_2' = T_2$, i.e.\ we consider the map $\mathcal C^{k,\alpha}\ni c' \mapsto \mathcal P(-\zeta^{k_0}c'/2) \in \overline{\zeta\mathcal A}^{k,\alpha}$. If $c'(\zeta) = \sum_{n \in \mathbb Z} c'_n \zeta^n$ with $c'_n = \overline{c'_{-n}}$, we have 
$$\mathcal P(-\zeta^{k_0}c'/2) = -\frac{1}{2}\sum_{n< 0} c'_{n-k_0}\zeta^n.$$ Since the indices $n-k_0$ are all negative, this map is clearly onto. 

We turn now to $T'_1$. The derivative of $T'_1$ in the $h'$ direction consists of two summands: computing the first one we obtain, setting $h' = (1-\zeta)h''$ with $h''\in \mathcal A^{k,\alpha}$ and $\beta_j = j(j-1)\alpha_{j}$, and taking in account that $\overline \zeta = 1/\zeta$,
$$
\begin{array}{lll}
\frac{\zeta^{k_0}h'(\zeta)}{s(\zeta)(1-\zeta)^{d-1}} P_{z^2}(1-\zeta, 1-\overline\zeta) & = &\frac{h''(\zeta)}{s(\zeta)(1-\zeta)^{d-2}} \sum_{j=d-k_0}^{k_0} \beta_j (1-\zeta)^{j-2} \zeta^{k_0}(1-\overline \zeta)^{d-j}\\
\\
&= &\frac{h''(\zeta)}{s(\zeta)(1-\zeta)^{d-2}} \sum_{j=d-k_0}^{k_0} \beta_j (1-\zeta)^{j-2} \zeta^{k_0+j-d}(\zeta-1)^{d-j}\\
\\
& = &\frac{h''(\zeta)}{s(\zeta)}\sum_{j=d-k_0}^{k_0} (-1)^{d-j}\beta_j \zeta^{k_0+j-d}.
\end{array}
$$
Since in the sum the exponents $k_0 + j - d$ are always non-negative, this term belongs to $\mathcal A^{k,\alpha}$ for any $h''\in \mathcal A^{k,\alpha}$, hence its projection vanishes. On the other hand, computing the second summand we get
$$
\begin{array}{lll}
\frac{\overline{h'(\zeta)}}{s(\zeta)(1-\zeta)^{d-1}}  \zeta^{k_0}P_{z\overline z}(1-\zeta, 1-\overline\zeta) &=& 
-\frac{\overline{h''(\zeta)}}{\zeta s(\zeta)(1-\zeta)^{d-2}} (\zeta - 1)^{d-2} Q(\zeta)\\
\\
&= &(-1)^dC\overline{h''(\zeta)}t(\zeta)
\end{array}
$$
Since $t(\zeta)$ is a  holomorphic polynomial whose zeros are contained in the unit disc $\Delta$, 
Lemma \ref{lemfred} implies that  the map $(T'_1)_Y$ is surjective at $p_0$ in the direction of $h'$.

Finally we have to consider the derivative of $T'_3$ in the $g'$ direction. Proving its surjectivity amounts to show that for any  $v\in \mathcal C_0^{k,\alpha}$ there exists $g'\in (1-\zeta)\mathcal A^{k,\alpha}$ such that $2\real g' = v$. Thus $\real g'$ is the harmonic extension of $v/2$ to the unit disc and $\imag g'$ is uniquely determined as the harmonic conjugate of $v/2$ such that $\imag g'(1) = 0$. It follows that $g'(1) = 0$, i.e.\ $g' = (1-\zeta)g''$ for some holomorphic function $g''$ continuous up to $\overline \Delta$ (in fact $g''$ is at least in $\mathcal A^{k-1}$ since $g'\in \mathcal A^{k,\alpha}$). We want to show that $g''\in \mathcal A^{k,\alpha}$. By definition of $\mathcal C_0^{k,\alpha}$, we have $v = (1-\zeta)v'$ for some $v'\in \mathcal C_\C^{k,\alpha}$. We can write for $\zeta \in b\Delta$
\[(1-\zeta)g''(\zeta) + (1 - \overline \zeta) \overline {g''(\zeta)} = (1-\zeta) v'(\zeta)\]
and since $(1- \overline \zeta) = -\overline\zeta (1-\zeta)$ for $\zeta \in b\Delta$,
\[g''(\zeta) - \overline {\zeta g''(\zeta)} = v'(\zeta).\]
Applying the Szego projection $\mathcal P': \mathcal C_\C^{k,\alpha} \to \mathcal A^{k,\alpha}$ to both sides, we conclude that $g'' = \mathcal P'(v') \in \mathcal A^{k,\alpha}$. By the discussion above, this finishes the proof of the surjectivity of $T'_Y$ at $p_0=(\rho, c_0, h_0, g_0)$.
\end{proof}

Now, we show that the kernel of $T'_Y$ at $p_0=(\rho, c_0, h_0, g_0)$ is finite dimensional:
\begin{lemma} \label{findim}
The kernel of $T'_Y(p_0)$ has real dimension $2(k_0 +\ell_0)+3=4k_0-d+3$.
\end{lemma}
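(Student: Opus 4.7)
The plan is to exploit the triangular structure of $T'_Y(p_0)$ already exhibited in the proof of Lemma \ref{surj}: the three components $(T'_2)_Y$, $(T'_1)_Y$, $(T'_3)_Y$ at $p_0$ depend only on $c'$, on $(c',h')$, and on $(c',h',g')$ respectively. I would accordingly peel off one coordinate at a time and count its contribution to $\ker T'_Y(p_0)$.

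The $c'$-slice is governed by $(T'_2)_Y(p_0)[c',h',g']=\mathcal P(-\zeta^{k_0}c'/2)=0$. Lemma \ref{lemc} (applied to the real-valued $c'\in\mathcal C^{k,\alpha}$) characterizes its solution set as the space of Laurent polynomials
\[c'(\zeta)=\overline c_{k_0}\zeta^{-k_0}+\cdots+c_0+\cdots+c_{k_0}\zeta^{k_0},\qquad c_0\in\R,\ c_1,\ldots,c_{k_0}\in\C,\]
contributing $2k_0+1$ real dimensions.

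For such a $c'$, I would then write $h'=(1-\zeta)h''$ with $h''\in\mathcal A^{k,\alpha}$ and recycle the calculation from the proof of Lemma \ref{surj}: the $P_{z^2}$-summand of $(T'_1)_Y(p_0)$ always lies in $\mathcal A^{k,\alpha}$ and projects to zero, while the $P_{z\overline z}$-summand simplifies via (\ref{eqQ})--(\ref{eqQ2}) to $(-1)^dC\,t(\zeta)\overline{h''(\zeta)}$. Hence $(T'_1)_Y(p_0)=0$ rewrites as
\[(-1)^dC\,\mathcal P\bigl(t\cdot\overline{h''}\bigr)=-\mathcal P\!\left(\frac{\zeta^{k_0}}{s(\zeta)(1-\zeta)^{d-1}}\,P_z(1-\zeta,1-\overline\zeta)\,c'\right),\]
which is an equation on $\overline{h''}\in\overline{\mathcal A}^{k,\alpha}$. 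By Lemma \ref{lemfred}, $\mathcal P(t\cdot)$ is surjective onto $\overline{\zeta\mathcal A}^{k,\alpha}$, so a solution exists for every admissible $c'$; its complex kernel dimension equals the opposite of the winding number of $\overline{\zeta t}$, i.e.\ the number $\ell_0+1$ of zeros of $\zeta t$ inside $\Delta$. This contributes a further $2(\ell_0+1)$ real dimensions.

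The third step is essentially automatic: given $(c',h')$, the equation $(T'_3)_Y(p_0)[c',h',g']=0$ prescribes $2\real g'=2\real\!\left(P_z(1-\zeta,1-\overline\zeta)\,h'\right)$, which belongs to $\mathcal C_0^{k,\alpha}$ because $h'(1)=0$, and the Szeg\"o-projection argument at the end of the proof of Lemma \ref{surj} produces a unique $g'\in(1-\zeta)\mathcal A^{k,\alpha}$ realizing this real part. Summing the three contributions yields
\[\dim_\R\ker T'_Y(p_0)=(2k_0+1)+2(\ell_0+1)+0=2(k_0+\ell_0)+3=4k_0-d+3,\]
using $\ell_0=k_0-d/2$. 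The most delicate step is the second: I would need to verify carefully that the inhomogeneous term built from $c'$ actually lies in $\overline{\zeta\mathcal A}^{k,\alpha}$ so that Lemma \ref{lemfred} applies, and to convert the winding-number formula there into the zero-count $\ell_0+1$ for $\zeta t$; everything else is bookkeeping once the triangularization is in place.
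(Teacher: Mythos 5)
Your proposal is correct and follows essentially the same route as the paper: the same triangularization into the $c'$, $h'$, and $g'$ slices, with Lemma \ref{lemc} giving $2k_0+1$ real dimensions, Lemma \ref{lemfred} applied to $\mathcal P(t\,\cdot)$ giving $2(\ell_0+1)$ more from the homogenized equation, and the Szeg\H{o}-projection argument yielding a unique $g'$. The two "delicate" points you flag are handled in the paper exactly as you anticipate (the inhomogeneous term lies in the image of $\mathcal P$ by construction, and the winding-number count for $\overline{\zeta t}$ reduces to the $\ell_0+1$ zeros of $\zeta t$ in $\Delta$), so nothing is missing.
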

\begin{proof} We will revisit the computations performed in Lemma \ref{surj}, and start solving $(T'_2)_Y(p_0)[c',h',g']=\mathcal P \left(-\frac{\zeta^{k_0}}{2}c' \right)=0$. 
According to Lemma \ref{lemc}, $c'$ must be of the form (\ref{eqc}). 
Thus the projection of $\ker T'_Y(p_0)$ to the factor $\mathcal C^{k,\alpha}$ has real dimension $2k_0 + 1$.

Next, looking at the expression of $(T'_1)_Y(p_0)$, and taking in account that, as proved in Lemma \ref{surj}, 
$\frac{\zeta^{k_0}h'(\zeta)}{s(\zeta)(1-\zeta)^{d-1}} P_{z^2}(1-\zeta, 1-\overline\zeta) \in \mathcal A^{k,\alpha}$ for any $h'\in (1-\zeta)\mathcal A^{k,\alpha}$, we see that for any given $c'$ of the form  (\ref{eqc}), we need to solve for $h'$ the equation
\begin{equation}\label{needtosol}
\mathcal P \left( \frac{\overline{h'(\zeta)}}{s(\zeta)(1-\zeta)^{d-1}}  \zeta^{k_0}P_{z\overline z}(1-\zeta, 1-\overline\zeta) \right) = v 
\end{equation}
where
\[v = - \mathcal P \left(\frac{\zeta^{k_0}}{s(\zeta)(1-\zeta)^{d-1}} P_z(1-\zeta, 1-\overline\zeta) c'  \right ) \in \overline {\mathcal A}^{k,\alpha}.\]
Since any two solutions of (\ref{needtosol}) differ by a solution of its homogenized version (i.e.\ with $v=0$), it is of course enough to consider the latter. In view of the computations in Lemma \ref{surj}, this amounts to solving
\begin{equation}\label{needtosol2}
\mathcal P \left(t(\zeta) \overline{h''(\zeta)}\right ) = 0
\end{equation}
for $h'' \in \mathcal A^{k,\alpha}$. According to Lemma \ref{lemfred}, the kernel of 
$\mathcal P (t.)$ has complex dimension $\ell_0+1$. Thus 
the (affine) space of solutions of (\ref{needtosol}) has real 
dimension 
$2(\ell_0+1)$, and in turn that the projection of $\ker T'_Y(p_0)$ to the factor $\mathcal C^{k,
\alpha}\times (1-\zeta)\mathcal A^{k,\alpha}$ has real dimension  $2k_0 + 2\ell_0+ 3$.

Finally, considering $(T'_3)_Y$, given any $h'$ which satisfies (\ref{needtosol}) we must solve for $g'\in (1-\zeta)\mathcal A^{k,\alpha}$ the equation
\[ \frac{g'}{2} + \frac{\overline g'}{2} = P_z(1-\zeta, 1-\overline\zeta) h' +  P_{\overline z}(1-\zeta, 1-\overline\zeta) \overline h'.\]
The same proof as in Lemma \ref{surj}, however, shows that the previous equation admits a unique solution $g'$. By the previous arguments, we conclude that the kernel of $T'_Y(p_0)$ has  real dimension $2(k_0 + \ell_0) + 3$.
\end{proof}

We have proved that the derivative at the point $p_0=(\rho, c_0, h_0, g_0)$ of  
$T'_Y(p_0): Y \to (\mathcal{C}_\C^{k,\alpha})^2 \times \mathcal C_0^{k,\alpha}$ is surjective
and that its kernel  $\ker T'_Y(p_0)$ has real dimension $4k_0-d+3$. In particular, the 
Banach space $Y$ can be decomposed as a direct sum $Y= \ker T'_Y(p_0) \oplus W$, 
where $W$ is isomorphic to $(\mathcal{C}_\C^{k,\alpha})^2 \times \mathcal C_0^{k,\alpha}$.
According to the implicit function theorem, there exist open neighborhoods $V$ of $\rho$ in $X$, $U$ 
of $0$ in $\ker T'_Y(p_0)\simeq \R^{4k_0-d+3}$, and $U'$ of $(c_0,f_0)$ in $W$,   
and a map $v: V \times U \to U'$,
 such that $T'(r,t\oplus w)=0$ if and only if $w=v(r,t)$. It follows that the 
 $\mathcal C^1$ map  $\mathcal{F}:V \times U \to Y$ defined by $\mathcal{F}(r,t)=t\oplus v(r,t)$ 
satisfies the properties of Theorem  \ref{theodiscs}.

\section{Properties of $k_0$-stationary discs}

Let $S= \{\rho = 0\}$ be a model hypersurface of type $d$ 
$$\rho(z,w)  = - \real w + P(z,\overline z) =  - \real w + \sum_{j=d-k_0}^{k_0} \alpha_{j} z^j \overline{z}^{d-j}.$$
Let $k$ be an integer and let  $0<\alpha<1$.
Fix a $k_0$-stationary disc $f_0(\zeta)=(1-\zeta,g_0)$ with $c_0\equiv 1$.
For $\eta>0$, we denote by $\mathcal{S}^{r}_{0,\eta}$ the set 
$$
\mathcal{S}^{r}_{0,\eta} = \{(c,f)=(c,h,g) \in Y \ | \ (c,f) \in \mathcal{S}_{0}^r,\ \|(c,f)-(1,f_0)\|_Y<\eta \}.
$$
Let $U$, $V$, $\eta>0$, and $\mathcal F:U\times V\to \mathcal{S}^{r}_{0,\eta}$ given by Theorem \ref{theodiscs}. 
We write 
$$\mathcal{F}(r,t)=(c_{r,t},h_{r,t},g_{r,t}).$$ Define a map $\Phi: U \times V \rightarrow \R \times \C^{k_0}\simeq \R^{2k_0+1}$ by 
$$\Phi(r,t)=\left(c_{r,t,0},c_{r,t,1},\cdots,c_{r,t,k_0}\right)$$
where  $c_{r,t}(\zeta)=\sum_{n< 0} {c_{r,t,n}}\zeta^{n}+ c_{r,t,0}+ \sum_{n>0} c_{r,t,n}\zeta^n$ with $c_{r,t,n}=\overline{c_{r,t,-n}}$. 
With a slight abuse of notations, we will also write $\Phi(r,c,f)=\Phi\left(r,\mathcal F(r,.)^{-1}(c,f)\right)$; this notation will be used in the proof of Theorem \ref{theojet}. 
\begin{lemma}\label{lemsub}
Shrinking the neighborhoods $U$ and $V$ if necessary, the map $\Phi$  is a  submersion. 
\end{lemma}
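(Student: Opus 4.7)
The plan is to verify that $d\Phi$ is surjective at the reference point $(\rho, 0)$, and then invoke openness of the full-rank condition for $\mathcal{C}^1$ maps into a finite-dimensional target to shrink $V$ and $U$. Since the target $\R\times \C^{k_0} \simeq \R^{2k_0+1}$ is finite-dimensional, the set of points where $d\Phi$ is surjective is open, so reducing to the single point $(\rho, 0)$ is harmless.

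To compute $d\Phi|_{(\rho,0)}$ I shall restrict to the $t$-direction. Since $\mathcal F(r,t)=t\oplus v(r,t)$ is produced by the implicit function theorem using the splitting $Y = \ker T'_Y(p_0)\oplus W$ fixed just before Theorem \ref{theodiscs}, the partial derivative $d_t \mathcal F|_{(\rho,0)} : \ker T'_Y(p_0)\to Y$ is the canonical inclusion. Writing a tangent vector in $\ker T'_Y(p_0)$ as a triple $(c',h',g')$, this identifies
$$d_t\Phi|_{(\rho,0)}(c',h',g') = (c'_0, c'_1,\ldots, c'_{k_0}),$$
where $c'_n$ is the $n$-th Fourier coefficient of $c'$.

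The key step will be to show that the projection $\pi: \ker T'_Y(p_0)\to \R\times \C^{k_0}$, $(c',h',g')\mapsto (c'_0,\ldots, c'_{k_0})$, is onto. By Lemmas \ref{lemc} and \ref{findim}, a triple lies in $\ker T'_Y(p_0)$ only if $c'$ has the form (\ref{eqc}); conversely, given any $c'$ of that form, the analysis in Lemma \ref{findim} shows that the corresponding equation (\ref{needtosol}) for $h'$ is solvable thanks to the surjectivity statement in Lemma \ref{lemfred}, and then $g'$ is uniquely determined by the harmonic-extension argument at the end of Lemma \ref{surj}. Since functions of the form (\ref{eqc}) are parametrized bijectively by $(c'_0,\ldots, c'_{k_0}) \in \R \times \C^{k_0}$, the projection $\pi$ is surjective, and hence so is $d_t\Phi|_{(\rho,0)}$. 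A fortiori $d\Phi|_{(\rho,0)}$ is surjective.

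I do not anticipate any real obstacle: the required analysis of $\ker T'_Y(p_0)$ has already been carried out in Lemmas \ref{surj} and \ref{findim}, and the argument reduces to bookkeeping on the $c'$-coordinate. The only point to verify carefully is that the identification $U\subset \ker T'_Y(p_0)\simeq \R^{4k_0-d+3}$ is compatible with singling out the first $2k_0+1$ coordinates as $(c'_0,\ldots, c'_{k_0})$; this is purely a choice of linear basis on the finite-dimensional space $\ker T'_Y(p_0)$ and can be arranged at will, for instance by taking the remaining $2\ell_0+2$ coordinates to parametrize $\ker \mathcal P(t\cdot)$ as in the proof of Lemma \ref{findim}.
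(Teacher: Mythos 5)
Your proof is correct and follows essentially the same route as the paper: the paper's (very terse) argument likewise reads off from Lemma \ref{lemc} and the proof of Lemma \ref{findim} that every $c'$ of the form (\ref{eqc}) occurs as the first component of an element of $\ker T'_Y(p_0)$, so that $\frac{\partial \Phi}{\partial t}(\rho,0)$ has full rank $2k_0+1$, and then invokes openness of the full-rank condition to shrink the neighborhoods. Your additional observation that $d_t\mathcal F|_{(\rho,0)}$ is the canonical inclusion of $\ker T'_Y(p_0)$ (because $d_tv(\rho,0)$ lands in $\ker T'_Y(p_0)\cap W=\{0\}$) is exactly the implicit step the paper leaves unstated.
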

\begin{proof}
According to the proof of Lemma \ref{findim} and Lemma \ref{lemc}, the derivative $\frac{\partial \Phi}{\partial t}(\rho,0)$ is of rank $2k_0+1$. 
Thus for $r$  in a neighborhood of $\rho$ and $t$ sufficiently small, the rank of $\frac{\partial \Phi}{\partial t}(r,t)$ is also $2k_0+1$. 
This proves that for a fixed $\rho$, the map
$\Phi(\rho,.): V \rightarrow \R^{2k_0+1}$ is  a submersion.
\end{proof}

We define a real $2$-dimensional submanifold $\Gamma$ of $\mathbb R \times \C^{k_0}$ in the following way:  
$$
\begin{array}{l}
\Gamma = \{(c_0,\ldots, c_{k_0}) \in \mathbb R \times \C^{k_0} | \ c(b) = (\overline b\overline \zeta + 1 + b\zeta)^{k_0}

=
\sum_{n=-k_0}^{-1}\overline{c_n}\zeta^n 
+ c_0 + \sum_{n=1}^{k_0}c_n\zeta^n, \\
\\  
  \hspace{12cm}  b\in \C, |b|<1/2 \},
\end{array}
$$ 
An easy computation shows that 
$$
\left\{
\begin{array}{lll}
c_0(b) & = & 1+ O(|b|^2), \\
\\
c_1(b) & = & k_0 b + O(|b|^2), \\
\\
c_n(b) & = & O(|b|^2) \mbox{ for } 2\leq n\leq k_0. 
\end{array}
\right.
$$
It follows that the tangent space of $\Gamma$ at the point $c_0=1$ is generated by the vectors $(0,1,0,\ldots,0)$ and $(0,i,0,\ldots,0)$.
Using Lemma \ref{lemsub}, the set $\Phi^{-1}(\Gamma)$ is a smooth submanifold of $U \times V$, and, for $r \in U$, 
 the submanifold
$$\mathcal M^r_{\eta} = 
\mathcal S^r_{0,\eta} \cap \mathcal F(\Phi^{-1}(\Gamma))$$ 
has real dimension $2\ell_0 + 4=2k_0-d+4=4k_0-d+3-(2k_0-1)$. 
Henceforth, we will restrict our attention to the submanifold $\mathcal M^r_{\eta}$ and we will study 
the properties of discs in that submanifold.    
\begin{remark}
Notice that the point 
$p_0=(\rho,c_0,h_0,g_0)$ belongs to $\mathcal M^\rho_{\eta}$. More in general, for $b\in \C$ such that $|b|<1/2$, define 
$p(b) = (\rho, c(b), h(b), g(b))$ with $c(b)=(\overline b\overline \zeta + 1 + b\zeta)^{k_0}$,  
$h(b) = \frac{1-\zeta}{1- a(b)\zeta}$ where $a(b) = \frac{-1+\sqrt{1-4|b|^2}}{2b}$ and $g(b)$ is uniquely determined by $h(b)$. Then $p(b)$ belongs to $\mathcal M^{\rho}_{\eta}$ 
for all $b\in \mathbb C$ with $|b|<1/2$. We will use this special family of $k_0$-stationary discs alongside with Lemma \ref{findim} to compute the tangent 
space of $\mathcal M^{\rho}_{\eta}$ at $p_0$.
\end{remark}
Define $\pi: Y\to (1-\zeta)\mathcal A^{k,\alpha} \times (1-\zeta)\mathcal A^{k,\alpha}$ to be the projection on the $(h,g)$ factor and $\pi': Y\to (1-\zeta)
\mathcal A^{k,\alpha}$ to be the projection on the $h$ factor.

\begin{lemma} \label{piprime}
 The restrictions of $\pi'$ and  of $\pi$ to $T_{p_0}\mathcal M^\rho_{\eta}$ are injective.
\end{lemma}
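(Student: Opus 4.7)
Since $\pi = (\pi', q)$ with $q$ the projection onto the $g$-factor, injectivity of $\pi'$ on $T_{p_0}\mathcal M^\rho_\eta$ automatically implies that of $\pi$; hence it suffices to prove the former. By the implicit-function-theorem construction of $\mathcal F$, the relation $d\mathcal F(\rho,0)(0,t')=t'$ identifies $T_{p_0}\mathcal S^\rho_{0,\eta}$ with $\ker T'_Y(p_0)$, and the explicit computation of $T_1\Gamma$ carried out just before the lemma implies that an element $(c',h',g')\in T_{p_0}\mathcal M^\rho_\eta$ must have $c'(\zeta) = c'_1\zeta + \overline{c'_1}\zeta^{-1}$ for some $c'_1\in\C$.

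Suppose now that $(c',h',g')\in T_{p_0}\mathcal M^\rho_\eta$ satisfies $h'=0$. At $p_0=(\rho,c_0,h_0,g_0)$ the mixed derivatives $\rho_{zw}$ and $\rho_{z\overline w}$ vanish, so the $g'$-terms drop out of the expression for $(T'_1)_Y(p_0)$ derived in Lemma \ref{surj}, and the equation $(T'_1)_Y(p_0)[c',0,g']=0$ reduces to
\[\mathcal P\left(\frac{\zeta^{k_0}}{s(\zeta)(1-\zeta)^{d-1}}\,P_z(1-\zeta,1-\overline\zeta)\,c'\right)=0.\]
Using $\overline\zeta=1/\zeta$ on $b\Delta$, a direct computation yields $\zeta^{k_0}P_z(1-\zeta,1-\overline\zeta)=(1-\zeta)^{d-1}R(\zeta)$ where $R$ is a holomorphic polynomial whose constant term $R(0) = (-1)^{k_0}(d-k_0)\alpha_{d-k_0}$ is nonzero (since $\alpha_{d-k_0}=\overline{\alpha_{k_0}}\neq 0$). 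As the roots of $s$ all lie outside $\overline\Delta$, the quotient $A:=R/s$ is holomorphic on $\overline\Delta$ with $A(0)\neq 0$. Substituting the form of $c'$, the summand $c'_1 A\zeta$ is holomorphic while $\overline{c'_1}A\zeta^{-1}$ contributes the single negative-power term $\overline{c'_1}A(0)\zeta^{-1}$; so $\mathcal P(Ac')=\overline{c'_1}A(0)\zeta^{-1}$, forcing $c'_1 = 0$ and hence $c'\equiv 0$.

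Finally, with $c'=h'=0$, the equation $(T'_3)_Y(p_0)[0,0,g']=0$ reduces to $\real g'\equiv 0$ on $b\Delta$. Since $g'\in(1-\zeta)\mathcal A^{k,\alpha}$ is holomorphic on $\Delta$ with zero real part on the boundary, it is a purely imaginary constant, and the condition $g'(1)=0$ then yields $g'=0$, completing the argument. The main subtlety I anticipate is the nonvanishing $A(0)\neq 0$, which is the algebraic input distinguishing the admissible tangent direction for $c'$ from a potential kernel direction; it is ensured by the hypothesis $\alpha_{k_0}\neq 0$ in the definition of the model hypersurface together with the location of the roots of $s$ outside $\overline\Delta$.
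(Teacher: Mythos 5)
Your proof is correct, but it travels a different road than the paper's. The paper first derives the explicit parametrization of (the $(c,h)$-projection of) $T_{p_0}\mathcal M^\rho_{\eta}$ as $\bigl(2t_1\real\zeta-2t_2\imag\zeta,\ (1-\zeta)((-t_1+it_2)\zeta+h'')\bigr)$ with $h''$ a homogeneous solution of $\mathcal P(t(\zeta)\overline{h''(\zeta)})=0$ — using the special family $p(b)$ to produce the particular solution — and then observes that $h'=0$ forces $\overline{c'_1}\zeta$ to be a homogeneous solution, which is excluded because $\mathcal P(t(\zeta)\overline\zeta)=t(0)\overline\zeta\neq 0$ ($0$ is not a root of $t$). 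You instead bypass the parametrization entirely: assuming $h'=0$, you read off from the linearized equation $(T'_1)_Y(p_0)=0$ the condition $\mathcal P(Ac')=0$ with $A=R/s$ coming from the $P_z$-term, and kill $c'$ via $A(0)=R(0)/s(0)\neq 0$; the factorization $\zeta^{k_0}P_z(1-\zeta,1-\overline\zeta)=(1-\zeta)^{d-1}R(\zeta)$ and the value $R(0)=(-1)^{k_0}(d-k_0)\alpha_{d-k_0}$ are correct, as is the final elimination of $g'$ via $\real g'=0$ and $g'(1)=0$. It is worth noting that the two nonvanishing conditions are really the same fact in disguise: $t(0)\neq 0$ says that $0$ is a root of $Q$ of multiplicity exactly one, and the lowest-order coefficient of $Q$ is $(-1)^{d-k_0-1}k_0(d-k_0)\alpha_{d-k_0}$, so both arguments ultimately rest on $\alpha_{d-k_0}=\overline{\alpha_{k_0}}\neq 0$. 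Your route is more self-contained for this lemma; the paper's pays off later because the explicit description (\ref{vectby}) of the tangent space is reused in the proof of part $ii)$ of Proposition \ref{propprop}. One small point to make explicit: the normal form $c'=c'_1\zeta+\overline{c'_1}\,\overline\zeta$ uses not only the tangent space of $\Gamma$ (which only controls the Fourier coefficients $c'_0,\dots,c'_{k_0}$) but also the kernel condition of $(T'_2)_Y(p_0)$ together with Lemma \ref{lemc}, which kills all coefficients of index $|n|>k_0$.
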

\begin{proof}
We first note that, as follows from the proofs of Lemmas \ref{surj} and \ref{findim}), for any $(c',h',g')\in T_{p_0}\mathcal M^\rho_{\eta}$ we have that $g'$ is uniquely determined by $h'$. It immediately follows that $\pi$ is injective if and only if $\pi'$ is injective, and in the rest of the proof we can ignore the factor relative to $g'$.

We start by computing the tangent space of the real $2$-dimensional submanifold of 
$\mathcal M^\rho_{\eta}$ given by the parametrization $p(b)$ defined above, at $p_0 = p(0)$. Developing $c(b)$ and $h(b)$ up to first order in $b$ and taking in account that $a(b) = -\overline b + O(|b|^2)$, we get
$$
\left\{
\begin{array}{lll} 
c(b) &=& \overline b\overline \zeta + 1 + b\zeta + O(|b|^2),\\
\\
h(b) &=&  (1-\zeta)(1 - \overline b \zeta) + O(|b|^2).
\end{array}
\right.
$$
It follows that the (projection to the $(c,h)$ factor of the) tangent space of the submanifold parametrized by $p(b)$ at $p_0$ is  generated over $\mathbb R$ by 
$(\overline \zeta + \zeta, -(1-\zeta)\zeta)$ and $(-i\overline \zeta + i\zeta, i(1-\zeta)\zeta)$. In view of the proof of Lemma \ref{findim},  the elements of (the projection to the $(c,h)$ space of) 
$T_{p_0}\mathcal M^\rho_{\eta}$ are the vectors given by
\begin{equation}\label{vectby}
\left ( 2t_1\real \zeta - 2t_2 \imag \zeta, (1-\zeta)\cdot ((-t_1+ it_2)\zeta + h''(\zeta))\  \right )
\end{equation}
for $t_1,t_2\in \mathbb R$ and where $h''$ is a solution of (\ref{needtosol2}) for $v=0$.

Define now the linear subspace $\mathcal H''\subset \mathcal A^{k,\alpha}$  as the span over $\C$ of $\zeta$ and of homogeneous solutions 
of (\ref{needtosol2}). 
Moreover, set $
\mathcal H' = 
(1-\zeta)\mathcal H'' \subset (1-\zeta)\mathcal A^{k,\alpha}$. By (\ref{vectby}), it follows that $\mathcal H'$ is the image of the restriction of $\pi'$ to 
$T_{p_0}\mathcal M^\rho_{\eta}$, and $\pi'$ is injective if and only if the function $\zeta$ is not a solution of (\ref{needtosol2}). Recal that $t(\zeta) = \prod_{i_0 + 1\leq j \leq 2k_0 - d} (q_j - \zeta)$. We have 
$$\mathcal P(t(\zeta)\zeta)=\frac{\prod_{i_0 + 1\leq j \leq 2k_0 - d} q_j}{\zeta}.$$  
Since none of the roots of $t$ is equal to zero we have $\mathcal P(t(\zeta)\zeta)\neq 0$.

\end{proof}
 In particular, notice that $\dim_\R \mathcal H' = 2\ell_0 + 4$.
 
Define now, for any $n\in \mathbb N$, $n\leq k$, the {\it $n$-jet map at $1$} as the complex linear map $\mathfrak j_n: (1-\zeta)\mathcal A^{k,\alpha}\to \mathbb C^n$ given by
\[\mathfrak j_n(h) = (h'(1), h''(1),\ldots, h^{(n)}(1)) \]
for any $h\in (1-\zeta)\mathcal A^{k,\alpha}$.

\begin{prop}\label{propprop}
Let $S= \{\rho = 0\}$ be a model hypersurface of finite type $d$. Fix a $k_0$-stationary disc $f_0(\zeta)=(1-\zeta,g_0(\zeta))$ with 
$c_0\equiv1$, attached to 
$S$.  
Then
\begin{enumerate}[i)]
\item There exists a sufficiently small $\theta \in \R$ such that the derivative of the evaluation map $\varphi:\pi'(\mathcal M^\rho_{\eta})\rightarrow \C^2$ defined by $\varphi(h)=(h(0),g(0))$, at the point $h_\theta(\theta)=(1-\zeta)e^{i\theta}$, is surjective.   
\item  Shrinking the neighborhood $U$ of $\rho$, and $\eta >0$, given in Theorem \ref{theodiscs},  
for  $r \in U$, for $\ell_0=k_0-d/2$,
 the restriction of  $\mathfrak j_{\ell_0 + 2}$ to $\pi'(\mathcal M^r_{\eta})$ is a diffeomorphism onto its image.
\end{enumerate}
\end{prop}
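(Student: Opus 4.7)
My approach handles part (ii) first, where a structural reformulation does most of the work, and then part (i), which requires explicit computations in the family from Example~\ref{exmodel}. For part (ii), I plan to describe $\mathcal{H}''$ explicitly via a dual polynomial. Set $\tilde{t}(\zeta):=\prod_{i_0+1\leq j\leq 2k_0-d}(1-\overline{q_j}\zeta)$, whose zeros lie strictly outside $\overline{\Delta}$. On $b\Delta$ one has $\overline{\tilde t(\zeta)}=(-1)^{\ell_0}t(\zeta)/\zeta^{\ell_0}$; multiplying the condition $\mathcal{P}(t\overline{h''})=0$ by $\zeta^{\ell_0}$ and conjugating converts it into the requirement that $\tilde t(\zeta)\,h''(\zeta)$ be a polynomial of degree at most $\ell_0$. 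Absorbing the summand $c\zeta$ into this polynomial then identifies every element of $\mathcal{H}''$ with a rational function of the form $R(\zeta)/\tilde{t}(\zeta)$ for some polynomial $R$ of degree at most $\ell_0+1$, and the $(\ell_0+2)$ complex parameters match the dimension count from Lemma~\ref{findim}.

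With this description, injectivity of $\mathfrak{j}_{\ell_0+2}$ on $\mathcal{H}'=(1-\zeta)\mathcal{H}''$ is immediate: as $\tilde{t}(1)=\prod(1-\overline{q_j})\neq 0$, the vanishing of the $(\ell_0+2)$-jet of $(1-\zeta)R/\tilde{t}$ at $\zeta=1$ is equivalent to $R$ having a zero of order $\ell_0+2$ at $1$; since $\deg R\leq \ell_0+1$ this forces $R\equiv 0$. The target $\C^{\ell_0+2}$ has real dimension $2(\ell_0+2)$ matching that of $\mathcal{H}'$, so $\mathfrak{j}_{\ell_0+2}|_{\mathcal{H}'}$ is a linear isomorphism at $p_0$. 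By continuity in $r$ of the construction given by Theorem~\ref{theodiscs} and the inverse function theorem, this extends to a local diffeomorphism throughout $\pi'(\mathcal{M}^r_\eta)$ after shrinking $U$ and $\eta$; a further shrinkage ensures global injectivity.

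For part (i), my plan is to exploit the two-complex-parameter family $(b,v)\mapsto (c(b),h(b,v),g(b,v))$ from Example~\ref{exmodel} (with $h(b,v)=v(1-\zeta)/(1-a(b)\zeta)$ and $v\in\C^*$), which lies in $\mathcal{M}^\rho_\eta$ since varying $v$ does not affect $c$ and hence stays inside $\Phi^{-1}(\Gamma)$. The disc $h_\theta$ corresponds to $(b,v)=(0,e^{i\theta})$, and since $h(b,v)(0)=v$, the four Wirtinger derivatives in $v,\overline{v},b,\overline{b}$ produce in the first coordinate of $\C^2$ the values $1,0,0,0$ respectively. Surjectivity onto the first factor is thus handled by the $v$-direction alone; the content of the claim is that the three tangent vectors induced by $\partial_{\overline{v}},\partial_b,\partial_{\overline{b}}$ span the second factor $\C\cong\R^2$ via their action on $g(0)$. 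Writing $g$ through the Schwarz integral of $P(h,\overline{h})$ on $b\Delta$ normalized by $g(1)=0$, one reduces to an explicit computation of these three derivatives at $(0,e^{i\theta})$.

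The main obstacle is precisely this last computation. In the expansion of $P(h_\theta,\overline{h_\theta})$ on $b\Delta$ the phases $e^{i\theta(2j-d)}$ enter, with $d-k_0\leq j\leq k_0$; at $\theta=0$ they trivialize and accidental cancellations in the mean-value and Hilbert-transform integrals may make the $(b,\overline{b})$-derivatives of $g(0)$ degenerate, whereas for generic small $\theta\neq 0$ the phases separate and contribute independent modes, producing a nondegenerate pairing with $\partial_{\overline{v}}g(0)$. Thus part (i) becomes a direct verification via Szeg\"o/Poisson integration, while the structural reformulation of $\mathcal{H}''$ in part (ii) is the central new input.
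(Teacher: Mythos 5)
Your treatment of part (ii) is correct and takes a genuinely different, and in my view cleaner, route than the paper. The paper solves the recursion $t_{\ell_0}\overline h''_{n+\ell_0}+\dots+t_0\overline h''_n=0$ explicitly, writes the general solution as a combination of the functions $(1-\overline r_j\zeta)^{-(i+1)}$, and then reduces the matrix of $\mathfrak j_{\ell_0+2}$ in that basis to a confluent Vandermonde determinant. Your reformulation via $\tilde t(\zeta)=\prod(1-\overline{q_j}\zeta)$ is sound: on $b\Delta$ one indeed has $t(\zeta)=(-1)^{\ell_0}\zeta^{\ell_0}\overline{\tilde t(\zeta)}$, so $\mathcal P(t\,\overline{h''})=0$ is equivalent to $\tilde t\,h''$ being a polynomial of degree at most $\ell_0$; since $\zeta=\zeta\tilde t/\tilde t$ with $\zeta\tilde t$ of exact degree $\ell_0+1$ (no root of $t$ is $0$), the span $\mathcal H''$ is exactly $\{R/\tilde t:\deg R\le \ell_0+1\}$, of complex dimension $\ell_0+2$ as required. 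The injectivity of $\mathfrak j_{\ell_0+2}$ then follows from $\tilde t(1)\ne 0$ and the degree bound on $R$, with no determinant computation. What your approach buys is a transparent structural reason for the injectivity; what the paper's buys is an explicit basis of the tangent space, which it reuses elsewhere. The passage from injectivity at $p_0$ to a diffeomorphism for nearby $r$ is handled at the same (acceptable) level of detail as in the paper.

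Part (i), however, contains a genuine gap. You set up the correct two-parameter family (the same one the paper uses, parametrized by $(b,v)$ rather than $(a,v)$), and you correctly observe that the first component of the evaluation map is $v$ itself, so that surjectivity reduces to a nondegeneracy condition on the derivative of $g(0)$. But the precise condition is not that the $\overline v$-, $b$- and $\overline b$-derivatives of $g(0)$ ``span'' the second factor: since any real tangent direction with $v'\ne 0$ already moves the first component, the $\overline v$-derivative of $g(0)$ cannot contribute to surjectivity. After prescribing $v'=Z$, what must hold is that the real-linear map $b'\mapsto \gamma b'+\delta\overline{b'}$ (where $\gamma,\delta$ are the $b$- and $\overline b$-derivatives of $g(0)$) is onto $\C$, i.e.\ $|\gamma|\ne|\delta|$ --- this is exactly the paper's condition $|I_1(\theta)|^2\ne|I_2(\theta)|^2$. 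More importantly, you do not prove this condition for any $\theta$: you only assert that ``for generic small $\theta\ne 0$ the phases separate.'' That assertion is the entire content of part (i). A priori the two trigonometric polynomials $|I_1(\theta)|^2$ and $|I_2(\theta)|^2$ could coincide identically, and ruling this out requires the explicit residue computation of $I_1$ and $I_2$ and the comparison of their top-degree Fourier coefficients (in the paper, via an inequality between products of binomial coefficients, with a separate argument when $k_0>d-3$ where the degrees simply differ). Without carrying out this computation, part (i) is not established.
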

In particular, point $i)$ of Proposition \ref{propprop} implies that the set $\{(h(0),g(0)) \in \C^2 | (h,g) \in \pi(\mathcal M^\rho_{\eta})\} \subseteq 
\{(h(0),g(0)) \in \C^2 | (c,h,g) \in \mathcal S^\rho_0\}$ contains an open set.  

\subsection{Proof of $i)$ of Proposition \ref{propprop}}
Define the  composed map 
$$\psi: \pi'(\mathcal M^{\rho}_{\eta}) \to \pi(\mathcal M^{\rho}_{\eta}) \to \C^2$$
defined by $\psi: h \mapsto (h,g) \mapsto (h(0),g(0))$. 
Recall that $(h,g) \in \mathcal{S}^{\rho}_0$ satisfy for  $\zeta \in b\Delta$
$$\real g(\zeta)= P(h(\zeta),\overline{ h(\zeta)}).$$
Thus, from $g(1)=0$ and from classical facts on Cauchy transform (see Lemma 3 in \cite{ro}), we have 
 $$g(0)=\frac{1}{i\pi}\int_{b\Delta}\frac{\real g(\zeta)}{1-\zeta}\frac{d\zeta}{\zeta}=\frac{1}{i\pi}\int_{b\Delta}\frac{P(h(\zeta),\overline{ h(\zeta)})}{1-\zeta}\frac{d\zeta}{\zeta}.$$ 
We consider the subset $\mathcal M':=\left\{h \in \pi'(\mathcal M^{\rho}_{\eta}) | \ h(\zeta)=\frac{1-\zeta}{1-a\zeta}v,  
a \in \Delta, v \in \C^*\right\}$. Notice that for such a disc $h$,  the tangent disc $h' \in T_h\mathcal M'$ can be written 
$$h'(\zeta)=\frac{(1-\zeta)\zeta}{(1-a\zeta)^2}va'+\frac{1-\zeta}{1-a\zeta}v'$$
where $(a',v') \in \C^2.$ Consider the disc $h_{\theta} \in \mathcal M'$ given by $h_{\theta}(\zeta)=(1-\zeta)e^{i\theta} \in \mathcal M'$ for $\theta$ 
sufficiently small. 
The derivative of $\psi$ at $h_{\theta} \in \mathcal M'$ is given by 
\begin{eqnarray*}
d_{h_{\theta}}\psi(h')&=&(h'(0),d_{h_{\theta}} g(0) (h'))\\
\\
&=&\left(h'(0),
\frac{1}{i\pi}\int_{b\Delta}\frac{P_z (h_{\theta}(\zeta),\overline{ h_{\theta}(\zeta)})h'(\zeta)}{1-\zeta}\frac{d\zeta}{\zeta}+
\frac{1}{i\pi}\int_{b\Delta}\frac{P_{\overline z}(h_{\theta}(\zeta),\overline{ h_{\theta}(\zeta)})\overline{h'(\zeta)}}{1-\zeta}\frac{d\zeta}{\zeta}
\right).
\end{eqnarray*} 
where $h' \in T_{h_{\theta}}\mathcal M'$. 
 Now we fix $(Z,W) \in \C^2$ and we solve $d_{h_{\theta}}\psi(h')=(Z,W)$. The first component gives $v'=Z$ and the second component leads to 
 \begin{eqnarray*}
 i\pi W&=&\int_{b\Delta}P_z \left(h_{\theta}(\zeta),\overline{ h_{\theta}(\zeta)}\right)Z\frac{d\zeta}{\zeta}+\int_{b\Delta}P_z \left(h_{\theta}(\zeta),\overline{h_{\theta}(\zeta)}\right)e^{i\theta}a'd\zeta\\
 \\
&&   -\int_{b\Delta}P_{\overline z}\left(h_{\theta}(\zeta),\overline{ h_{\theta}(\zeta)}\right)\overline{Z}\frac{d\zeta}{\zeta^2}-\int_{b\Delta}P_{\overline z}\left(h_{\theta}(\zeta),\overline{ h_{\theta}(\zeta)}\right)e^{-i\theta}\overline{a'}\frac{d\zeta}{\zeta^3}.  
 \end{eqnarray*} 
 Set  $I_1(\theta)=\int_{b\Delta}P_z \left(h_{\theta}(\zeta),\overline{h_{\theta}(\zeta)}\right)d\zeta$ and 
 $I_2(\theta)=-\int_{b\Delta}P_{\overline z}\left(h_{\theta}(\zeta),\overline{ h_{\theta}(\zeta)}\right)\frac{d\zeta}{\zeta^3}$. It follows that $d_{h_{\theta}}\psi(h')$ is surjective if and only if 
 $|I_1(\theta)|^2\neq |I_2(\theta)|^2$.  
 A straightforward computation leads to 
 $$I_1(\theta)=-\sum_{j=d-k_0}^{k_0}{d-1\choose d-1-j}j\alpha_je^{i(2j-d-1)\theta}.$$
 $$I_2(\theta)=\sum_{j=d-k_0}^{k_0-2}{d-1\choose d-3-j}(d-j)\alpha_je^{i(2j-d+1)\theta}$$    
if $d-3<k_0$, and  
 $$I_2(\theta)=\sum_{j=d-k_0}^{k_0}{d-1\choose d-3-j}(d-j)\alpha_je^{i(2j-d+1)\theta}.$$   
 otherwise. The highest degree term of the trigonometric polynomial $|I_1(\theta)|^2$ is given by 
 $$I_1(\theta)'=k_0(d-k_0)\alpha_{k_0}\overline{\alpha_{d-k_0}}{d-1\choose k_0-1}{d-1\choose d-1-k_0}j\alpha_je^{i(4k_0-2d)\theta}.$$
 In case  $k_0\leq d-3$, the highest degree term of the  trigonometric polynomial  $|I_2(\theta)|^2$ is given by 
 $$I_2(\theta)'=k_0(d-k_0)\alpha_{k_0}\overline{\alpha_{d-k_0}}{d-1\choose k_0-3}{d-1\choose d-3-k_0}j\alpha_je^{i(4k_0-2d)\theta}.$$
 Since ${d-1\choose k_0-1}{d-1\choose d-1-k_0}<{d-1\choose k_0-3}{d-1\choose d-3-k_0}$, there exist a sufficiently small $\theta$ such that  $|I_1(\theta)|^2\neq |I_2(\theta)|^2$. 
 Finally if $d-3<k_0$, the degrees of the trigonometric polynomial of $|I_2(\theta)|^2$ and  $|I_1(\theta)|^2$ are different 
 and therefore  $|I_1(\theta)|^2\neq |I_2(\theta)|^2$ for some sufficiently small $\theta$.
 \qed

\subsection{Proof of $ii)$ of Proposition \ref{propprop}} In order to prove the second part of Proposition \ref{propprop}. 
it is sufficient to show that the restriction of $\mathfrak j_{\ell_0 + 2}$ to 
$\pi'(T_{p_0}\mathcal M^{\rho}_{\eta})$ is injective.

We first compute explicitly the tangent space of $\mathcal M^\rho_{\eta}$ at $p_0$. 
Recall that the elements of (the projection to the $(c,h)$ space of) 
$T_{p_0}\mathcal M^\rho_{\eta}$ are given by
\begin{equation*}
\left ( 2t_1\real \zeta - 2t_2 \imag \zeta, (1-\zeta)\cdot ((-t_1+ it_2)\zeta + h''(\zeta))\  \right )
\end{equation*}
for $t_1,t_2\in \mathbb R$ and where $h''$ is a solution of (\ref{needtosol2}) for $v=0$.
We need to describe explicitly the solutions $h''$ of (\ref{needtosol2}) for $v=0$.
If $t(\zeta)\equiv 1$ put $\ell_0=0$, otherwise write $t(\zeta) = \sum_{i=0}^{\ell_0} t_i \zeta^i$ where $\ell_0$ is 
defined in Lemma \ref{surj} and $t_i \in \mathbb C$. We need to find 
$h''(\zeta) = \sum_{n\geq 0} h''_n \zeta^n$ such that $t(\zeta)\overline{h''(\zeta)}
\in \mathcal A^{k,\alpha}$. This translates into the following recursion:
\begin{equation}\label{traninto}
t_{\ell_0} \overline h''_{n+\ell_0} + \ldots + t_1 \overline h''_{n+1} + t_0 \overline h''_{n} = 0
\end{equation}
for all $n\geq 1$. 
Therefore, we must examine the solutions of (\ref{traninto}). Let $r_1, \ldots, r_{\ell_1}$ be the \emph{distinct} roots of the polynomial $t(\zeta)$, with, respectively, multiplicity $m_1,\ldots, m_{\ell_1}$, so that $\sum_{j=1}^{\ell_1} m_j = \ell_0$. Moreover, put $r_0 = 0$, $m_0=1$.
 The general solution of (\ref{traninto}) is then given by $h''_n = \sum_{j=0}^{\ell_1} Q_j(n) \overline r_j^n$, $n\geq 0$, where with a slight abuse of notation we set $(r_0)^0 = 1$ and where each $Q_j$ is a polynomial of degree at most $m_j-1$ with complex coefficients. Put $R_j(\zeta) = \sum_{n\geq 0} \overline r_j^n \zeta^n = \frac{1}{1 - \overline r_j \zeta}$. It is easy to see that, for any fixed polynomial $Q_j$ of degree $m_j - 1$, there exist $q_{j,0}, \ldots, q_{j,m_j-1}\in \mathbb C$ such that $\sum_{n\geq 0} Q_j(n)\overline r_j^n \zeta^n= \sum_{i=0}^{m_j-1} q_{j,i} \zeta^i R_j^{(i)}(\zeta)$. Hence for the general solution $h''(\zeta)$ of the homogeneous version of (\ref{needtosol2}) we get
\[h''(\zeta) =\sum_{j=0}^{\ell_1} \sum_{n\geq 0} Q_j(n)\overline r_j^n \zeta^n= \sum_{j=0}^{\ell_1} \sum_{i=0}^{m_j-1} q_{j,i} \zeta^i R_j^{(i)}(\zeta) = \sum_{j=0}^{\ell_1} \sum_{i=0}^{m_j-1} i! \cdot q_{j,i} \frac{\overline r_j^i\zeta^i }{(1-\overline r_j\zeta)^{i+1}}\]
for $q_{j,i}\in \mathbb C$. Note that, for all $1\leq j \leq \ell_1$, $0\leq i \leq m_j - 1$
\[\frac{\overline r_j^i\zeta^i }{(1-\overline r_j\zeta)^{i+1}} = \frac{(1 - (1 - \overline r_j\zeta))^i }{(1-\overline r_j\zeta)^{i+1}} = \sum_{\kappa=0}^i (-1)^\kappa \binom{i}{\kappa} \frac{1}{(1-\overline r_j\zeta)^{\kappa+1}}, \]
hence we can also write the general solution as
\begin{equation}\label{genersolut}
h''(\zeta) = s_{0,0} + \sum_{j=1}^{\ell_1} \sum_{i=0}^{m_j-1} s_{j,i} \frac{1}{(1-\overline r_j\zeta)^{i+1}}
\end{equation}
for $s_{j,i}\in \mathbb C$.  It follows that the elements of (the projection to the $(c,h)$ space of) $T_{p_0}\mathcal M^\rho_{\eta}$ are the vectors given by
\begin{equation}\label{vectby2}
\left ( 2t_1\real \zeta - 2t_2 \imag \zeta, (1-\zeta)\cdot ((-t_1+ it_2)\zeta + h''(\zeta))\  \right )
\end{equation}
for $t_1,t_2\in \mathbb R$ and $h''(\zeta)$ as in (\ref{genersolut}).

Let $\mathcal H' = \pi'(T_{p_0}\mathcal M^\rho_{\eta})$ be the $(2\ell_0 + 4)$-dimensional space introduced in Lemma \ref{piprime}, and set
\[u_0 = 1, \ u_1 = \zeta, \  \left \{u_{j,i} = \frac{1}{(1-r_j\zeta)^{i+1}} \right \}_{\substack{1\leq j\leq j_1, \\ 0\leq i\leq m_j - 1}}. \]
By definition, a basis of $\mathcal H'$ is then given by $(1-\zeta)u_0$, $(1-\zeta)u_1$ and $(1-\zeta)u_{j,i}$. By the Leibniz rule, for every $v(\zeta) = (1-\zeta)u(\zeta)$ and $n\geq 1$ we have $v^{(n)}(\zeta) = (1-\zeta)u^{(n)} - nu^{(n-1)}(\zeta)$, so that  $v^{(n)}(1) = - nu^{(n-1)}(1)$. Since
\[\frac{d^n}{ d\zeta^n} \left( \frac{1}{(1-r_j\zeta)^{i+1}} \right) = \frac{(i+n)!}{i!} \cdot \frac{r_j^n}{(1-r_j\zeta)^{i+n+1}},\] 
it follows that, for every $n>2$, the expression of $\mathfrak j_n$ in the given basis is 

\[  (-1) \cdot     \begin{pmatrix}
1		&   1  	& \frac{1}{1-r_1}    			  	 & \frac{1}{(1-r_1)^2} 			    &\cdots  & \frac{1}{(1-r_{\ell_1})^{m_{\ell_1}}} 			\\
0  		&   2         	& 2\frac{r_1}{(1-r_1)^2}	  		 &  2\frac{2r_1}{(1-r_1)^3}	    		    &\cdots  & 2\frac{m_{\ell_1}r_{\ell_1}}{(1-r_{\ell_1})^{m_{\ell_1} +1 }}   	\\
0		&    0	 	& 3\frac{2r_1^2}{(1-r_1)^3}		 & 3\frac{6r_1^2}{(1-r_1)^4} 		    &\cdots  & 3\frac{m_{\ell_1}(m_{\ell_1}+1)r_{\ell_1}^2}{(1-r_{\ell_1})^{m_{\ell_1} +2 }}  	     \\
\vdots	&  \vdots 	&   \vdots		     		  	  	 & 		\vdots				    &\ddots  & \vdots			 \\
0		&    0	 	& n \frac{(n-1)! r_1^{n-1}}{(1-r_1)^{n}}&  n \frac{n! r_1^{n-1}}{(1-r_1)^{n+1}}  &\cdots  & n \frac{(m_{\ell_1}+n-2)!r_{\ell_1}^{n-1}}{(m_{\ell_1}-1)!(1-r_{\ell_1})^{m_{\ell_1} +n-1 }}		 \\
		       \end{pmatrix}.
\]
Taking $n=\ell_0+2$, the one above is a square matrix of size $\ell_0+2$. Put now $\chi_j = \frac{r_j}{1-r_j}$. After a suitable sequence of scalar multiplications of the row/columns, we can transform the previous matrix into the following one:
\[			\begin{pmatrix}
1		&   1  	& 1    			& 1			    		        &\cdots  & 1			\\
0  		&   1         	& \chi_1	  		&  2\chi_1	    		    	        &\cdots  & m_{\ell_1} \chi_{\ell_1}  	\\
0		&    0	 	& \chi_1^2		& 3\chi_1^2		    	        &\cdots  & \binom{m_{\ell_1}+1}{2} \chi_{\ell_1}^2 	     \\
\vdots	&  \vdots 	&   \vdots		     	& 		\vdots		        &\ddots  & \vdots			 \\
0		&    0	 	&  \chi_1^{\ell_0+1}  &   (\ell_0+2)\chi_1^{\ell_0+1}  &\cdots  &  \binom{m_{\ell_1}+ \ell_0}{\ell_0 +1}  \chi_{\ell_1}^{\ell_0+1}		 \\
		        \end{pmatrix}.
\]
Next, performing a sequence of subtractions of columns and keeping in account the properties of the binomial coefficients, we get
\[			\begin{pmatrix}
1	 &   1      & 1    		         & -2			    		   &\cdots& 1 		                      &-2 				      &\cdots& (-1)^{m_{\ell_1}+1}(m_{\ell_1} + 1)			\\
0  	 &   1      & \chi_1	         &  -\chi_1	    		    	   &\cdots& \chi_{\ell_1}                  &- \chi_{\ell_1} 		      &\cdots& (-1)^{m_{\ell_1}+1}\chi_{\ell_1}  	\\
0	 &    0     & \chi_1^2	         &   0				    	   &\cdots& \chi_{\ell_1}^2              & 0  				      &\cdots& 0	     \\
0	 &   0      & \chi_1^3	         &  \chi_1^3			   &\cdots& \chi_{\ell_1}^3              &\chi_{\ell_1}^3 		      &\cdots& 0						\\
\vdots&\vdots&   \vdots	         & 		\vdots		   &\ddots& \vdots                             & \vdots                                  &\ddots& \vdots			 \\
0	 &    0    &\chi_1^{\ell_0+1}&(\ell_0-1)\chi_1^{\ell_0+1}&\cdots&\chi_{\ell_1}^{\ell_0+1}&(\ell_0-1)\chi_1^{\ell_0+1}&\cdots&\binom{\ell_0 - 1}{\ell_0 - m_{\ell_1} }  \chi_{\ell_1}^{\ell_0+1}  \\
		        \end{pmatrix}.
\]
Finally, a further sequence of column multiplications gives

\[			\begin{pmatrix}
1	 &   1      &\frac{1}{\chi_1^2}&\frac{-2}{\chi_1^3}		   &\cdots&\frac{1}{\chi_{\ell_1}^2}&\frac{-2}{\chi_{\ell_1}^3}  &\cdots& \frac{(-1)^{m_{\ell_1}+1}(m_{\ell_1} + 1)(m_{\ell_1} - 1)!}{\chi_{\ell_1}^{m_{\ell_1}+1}}			\\
0  	 &   1      &\frac{1}{\chi_1}	  &\frac{-1}{\chi_1^2}	    	   &\cdots& \frac{1}{\chi_{\ell_1}}    &\frac{-1}{\chi_{\ell_1}^2}  &\cdots& \frac{(-1)^{m_{\ell_1}+1}(m_{\ell_1} - 1)!}{\chi_{\ell_1}^{m_{\ell_1}}}  	\\
0	 &    0     & 1       	                    &   0				    	   &\cdots& 1              			    & 0  				     &\cdots& 0	     \\
0	 &   0      & \chi_1	           &  1			   	  	   &\cdots& \chi_{\ell_1}                   & 1 		           	     &\cdots& 0						\\
\vdots&\vdots&   \vdots	           & 		\vdots		   &\ddots& \vdots                             & \vdots                                 &\ddots& \vdots			 \\
0	 &    0    &\chi_1^{\ell_0-1}  &(\ell_0-1)\chi_1^{\ell_0-2}&\cdots&\chi_{\ell_1}^{\ell_0-1} &(\ell_0-1)\chi_1^{\ell_0-2}&\cdots&\frac{(\ell_0 - 1)!}{(\ell_0  - m_{\ell_1})! }  \chi_{\ell_1}^{\ell_0  - m_{\ell_1}}  \\
		        \end{pmatrix}.
\]

The $\ell_0\times \ell_0$ minor obtained by erasing the first two rows and columns of the previous matrix is a confluent Vandermonde matrix: it is a well-known fact that its determinant does not vanish. It follows that the determinant of the whole matrix is in turn non-vanishing, hence $\mathfrak j_{\ell_0 + 2}$ is injective as claimed.

\qed

\section{Finite jet determination}

\subsection{Dilation}

Let $\mathcal U$ be a neighborhood of $0$ in $\mathbb C^2$, and let $r:\mathcal U\to \mathbb R$ be a real valued function of class $\mathcal C^{d+k+3}$ that can be written as (\ref{eqdef3}). 
Moreover, let $H: \mathcal U\to \C^2$ be a local biholomorphism,  tangent to the identity up to order $\ell > d$, fixing $0$. We can write $H=(H_1,H_2)$, where
$$
\left\{
\begin{array}{lll}
H_1(z,w) &=& z + \sum_{j+l=\ell+1}z^jw^l H_1^{jl}(z,w),\\ 
\\
H_2(z,w) &= & w + \sum_{j+l=\ell+1}z^jw^l H_2^{jl}(z,w),
\end{array}
\right.
$$
for suitable $H_1^{jl}, H_2^{jl}\in \mathcal O(\mathcal U)$. Furthermore,
\[ r(z,w) = \rho(z,w) + \sum_{i+j=d+1} (z^i \overline z^j) \cdot r_{ij0}(z)+
\sum_{l=1}^{d-1}\sum_{i+j=d-l} z^i \overline z^j (\imag w)^l \cdot r_{ijl}(z,\imag w)+\theta_1(\imag w) \]
for suitable $r_{ij0} \in  \mathcal C^{k+3}_\C$, 
$r_{ijl}\in \mathcal  C^{k+3}_\C$, $\theta_1 \in \mathcal C^{k+3}$ 
such that   $\theta_1(\imag w)=O(|\imag w|^2)$.

For $t> 0$, denote by $\phi_t$ the linear map 
$$\phi_t(z,w) = (tz,t^dw),$$ 
and let $r_t = \frac{1}{t^d}r \circ \phi_t$ and  $H_t=\phi_t^{-1}\circ H \circ \phi_t$ be the direct images of,
respectively, $r$ and $H$, 
under $\phi_t$. For any $\delta>0$,  if $t>0$ is small enough, we have
 $\phi_t\left(\overline {\delta\Delta}^2\right)\subset \mathcal U$, hence $H_t$ is defined on 
 $\overline {\delta\Delta}^2$ and $r_t\in X$.
We first show that if $t$ is small enough then $r_t$ is a small perturbation of $\rho$: 
\begin{lemma}\label{rt} 
We have $\|r_t - \rho\|_X \to 0$ as $t\to 0$.
\end{lemma}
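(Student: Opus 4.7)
The plan is to exploit the homogeneity of $\rho$ and the decomposition (3.1) of $\theta$ to write $r_t-\rho$ explicitly as a finite sum of terms each carrying an explicit non-negative power of $t$, and then to verify that each term has $\mathcal C^{k+3}$ norm (as a coefficient in the decomposition defining $\|\cdot\|_X$) going to zero as $t\to 0$.

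First I would use that $P(z,\overline z)$ is homogeneous of degree $d$, so
$$\rho(\phi_t(z,w))=-\real(t^d w)+P(tz,t\overline z)=t^d\rho(z,w),$$
hence $\frac{1}{t^d}\rho\circ\phi_t=\rho$, which yields
$$r_t-\rho=\tfrac{1}{t^d}\,\theta\circ\phi_t.$$
This reduces the problem to analyzing the three types of summand appearing in the decomposition of $\theta$ separately.

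A direct calculation using $(tz)^i(t\overline z)^j(t^d\imag w)^l=t^{i+j+dl}z^i\overline z^j(\imag w)^l$ transforms the $(i,j,0)$-piece (with $i+j=d+1$) into $t\cdot z^i\overline z^j r_{ij0}(tz)$, the $(i,j,l)$-piece (with $l\geq 1$, $i+j=d-l$) into $t^{l(d-1)}z^i\overline z^j(\imag w)^l r_{ijl}(tz,t^d\imag w)$, and the final piece into $\tfrac{1}{t^d}\theta_1(t^d\imag w)$. The exponents $1$ and $l(d-1)$ are strictly positive since $d\geq 2$ (as $d$ is even and nonzero) and $l\geq 1$, so each of the first two kinds of summand carries an explicit positive power of $t$ in front of a $\mathcal C^{k+3}$ function. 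By the chain rule, all partial derivatives up to order $k+3$ of $r_{ij0}(tz)$ (resp.\ $r_{ijl}(tz,t^d\imag w)$) pick up only non-negative powers of $t$, and are therefore uniformly bounded for $t\leq 1$ by a multiple of $\|r_{ij0}\|_{\mathcal C^{k+3}}$ (resp.\ $\|r_{ijl}\|_{\mathcal C^{k+3}}$); the prefactors $t$ and $t^{l(d-1)}$ then drive the corresponding $\mathcal C^{k+3}$ coefficient norms to zero.

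The only slightly delicate point, and the one I expect to be the main (though still mild) obstacle, is the $\theta_1$-piece $g_t(\imag w):=\frac{1}{t^d}\theta_1(t^d\imag w)$, since no explicit positive power of $t$ appears in front. Here I would invoke the standing hypothesis $\theta_1(\imag w)=O(|\imag w|^2)$, which in particular gives $\theta_1(0)=\theta_1'(0)=0$. The chain rule gives $g_t^{(n)}(\imag w)=t^{d(n-1)}\theta_1^{(n)}(t^d\imag w)$ for $n\geq 1$; for $n\geq 2$ this is uniformly $O(t^d)$, while for $n=1$ uniform vanishing as $t\to 0$ follows from $\theta_1'(0)=0$ together with the continuity of $\theta_1'$ on $[-\delta,\delta]$. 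For $n=0$, Taylor's theorem with integral remainder writes $\theta_1(s)=s^2\tilde\theta_1(s)$ with $\tilde\theta_1$ bounded on $[-\delta,\delta]$, so $g_t(\imag w)=t^d(\imag w)^2\tilde\theta_1(t^d\imag w)=O(t^d)$ uniformly. Also, this shows that the new $\theta_1$-coefficient $g_t$ itself satisfies $g_t(\imag w)=O(|\imag w|^2)$, so $r_t$ indeed lies in $X$. Combining the finitely many uniform estimates yields $\|r_t-\rho\|_X\to 0$ as $t\to 0$.
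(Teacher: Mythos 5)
Your proof is correct and follows essentially the same route as the paper: both compute the rescaled coefficients $r^t_{ij0}(z)=t\,r_{ij0}(tz)$, $r^t_{ijl}=t^{l(d-1)}r_{ijl}(tz,t^d\imag w)$ and $\theta_1^t=\tfrac{1}{t^d}\theta_1(t^d\imag w)$ and observe that their $\mathcal C^{k+3}$ norms tend to $0$. Your treatment of the $\theta_1$-term (using $\theta_1(s)=O(s^2)$ and the chain rule, derivative by derivative) is in fact more detailed than the paper's, which simply asserts $\theta_1^t=O(|t|^d)$.
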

\begin{proof}
Indeed,
\[ r_t(z,w) = \rho(z,w) +  \sum_{i+j=d+1} (z^i \overline z^j) \cdot r^t_{ij0}(z)+
\sum_{l=1}^{d-1}\sum_{i+j=d-l} z^i \overline z^j (\imag w)^l \cdot r^t_{ijl}(z,\imag w)+\theta^t_1(\imag w)\] 
with
$$
\left\{
\begin{array}{lll}
r^t_{ij0}(z) &= &t \ r_{ij0}(tz), \\
\\
r^t_{ijl}(z,w) &= &t^{l(d-1)} r_{ijl}(tz,t^d\imag w) \ \mbox{ for } l\geq 1,\\
\\
\theta^t_1(\imag w) &= & O(|t|^d).
\end{array}
\right.
$$
It is clear that $\|r^t_{ijl}\|_{\mathcal C^{k+3}(\overline {\delta\Delta}^2)}\to 0$ and 
$\|\theta^t_1\|_{\mathcal C^{k+3}([\delta,\delta])}\to 0$ as $t\to 0$ , and the claim of the lemma follows from the definition of the norm in the space $X$.
\end{proof}
We will employ now the notation $Z = (1-\zeta)\mathcal A^{k,\alpha}\times (1-\zeta)\mathcal A^{k,\alpha}$ and the norm 
$$\|(h,g)\|_Z=\|h\|_{(1-\zeta)\mathcal A^{k,\alpha}}+
\|g\|_{(1-\zeta)\mathcal A^{k,\alpha}}.$$
\begin{lemma}\label{Ht}
For all $f = (h,g) \in Z$ and $t>0$ small enough we have $H_t\circ f\in Z$ and 
$$\|H_t\circ f - f\|_Z \leq tK\|f\|_Z^{\ell+2}$$ 
for some constant $K>0$.
\end{lemma}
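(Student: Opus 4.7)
The plan is to first verify that $H_t\circ f$ actually belongs to $Z$, and then to obtain the quantitative estimate by writing out the dilated expansion of $H$ explicitly and tracking the factors of $t$ and of $(1-\zeta)$.

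For the first point, I would observe that if $f=(h,g)\in Z$ has $\|f\|_Z$ small enough (which we may assume, since only a neighborhood of $0$ matters), then $f(\overline\Delta)\subset \overline{\delta\Delta}^2$ for $t$ small; thus $H_t\circ f$ is well defined. Holomorphy of $H_t$ on $\overline{\delta\Delta}^2$ together with the Banach-algebra structure of $\mathcal A^{k,\alpha}$ (and Lemma 5.1 in \cite{hi-ta}, used already in the proof of Theorem \ref{theodiscs}) give $H_t\circ f \in (\mathcal A^{k,\alpha})^2$. Because $h(1)=g(1)=0$ and $H_t(0,0)=0$, we have $(H_t\circ f)(1)=(0,0)$, so $H_t\circ f\in Z$.

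For the estimate, a direct computation yields
\[
H_{1,t}(z,w) - z = \sum_{j+l=\ell+1} t^{j+dl-1}\, z^jw^l\, H_1^{jl}(tz,t^dw), \qquad H_{2,t}(z,w) - w = \sum_{j+l=\ell+1} t^{j+dl-d}\, z^jw^l\, H_2^{jl}(tz,t^dw).
\]
Rewriting $j+dl-1 = \ell + (d-1)l$ and $j+dl-d = \ell+1-d + (d-1)l$, each exponent of $t$ is at least $\ell+1-d\geq 1$ (since $\ell > d$), so one power of $t$ factors out of every term. After substituting $(z,w)=(h,g)$ with $h=(1-\zeta)\tilde h$ and $g=(1-\zeta)\tilde g$, each monomial $h^jg^l$ with $j+l=\ell+1$ carries a factor $(1-\zeta)^{\ell+1}$; pulling one $(1-\zeta)$ out in order to measure the $(1-\zeta)\mathcal A^{k,\alpha}$ norm leaves $(1-\zeta)^\ell \tilde h^j \tilde g^l$, whose $\mathcal A^{k,\alpha}$ norm is bounded by $C_\ell\|\tilde h\|^j \|\tilde g\|^l$ by the Banach-algebra property. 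The composition $H_i^{jl}(th,t^dg)$ is uniformly bounded in $\mathcal A^{k,\alpha}$ for $t$ small and $\|f\|_Z$ bounded, again by Lemma 5.1 of \cite{hi-ta}. Summing over the finitely many $(j,l)$ yields
\[
\|H_t\circ f-f\|_Z \;\leq\; Kt\,(\|\tilde h\|+\|\tilde g\|)^{\ell+1}\,\bigl(1+\|f\|_Z\bigr) \;\leq\; tK\|f\|_Z^{\ell+2},
\]
where the extra power of $\|f\|_Z$ is absorbed from the factor $(1-\zeta)$ that is still available inside $\tilde h$ (or from the size of $H_i^{jl}(th,t^dg)$ if $H_i^{jl}(0,0)=0$); otherwise the crude estimate $\|f\|_Z^{\ell+1}\leq \|f\|_Z^{\ell+2}$ for $\|f\|_Z$ bounded can be used, possibly after enlarging $K$.

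The main technical point to be careful with is the bookkeeping: one must verify that every single term of the expansion of $H_t\circ f - f$ carries at least one power of $(1-\zeta)$ beyond what is consumed by the $Z$-norm (so the composition genuinely lies in $(1-\zeta)\mathcal A^{k,\alpha}$) and at least one power of $t$, the latter coming from the constraint $\ell>d$ which is exactly what compensates the most singular dilation weight $t^{-d}$ in the $w$-component. Once these two accounting steps are done, the remaining estimate is a finite, uniform Banach-algebra inequality with no further analytic difficulty.
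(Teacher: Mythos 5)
Your argument is correct and follows essentially the same route as the paper: the same dilated expansion of $H_t$, the same factorization of $(1-\zeta)^{\ell+1}$ out of the monomials $h^jg^l$, and the same exponent count $j+dl-d=\ell+1-d+l(d-1)\geq 1$ to extract one power of $t$, followed by the Banach-algebra and composition estimates. The only (harmless) imprecision concerns how the power $\|f\|_Z^{\ell+2}$ rather than $\|f\|_Z^{\ell+1}$ is reached: the paper takes the extra factor from the composition estimate for $H_i^{jl}(t h,t^d g)$, while your fallback ``$\|f\|_Z^{\ell+1}\leq\|f\|_Z^{\ell+2}$ for $\|f\|_Z$ bounded'' should read ``bounded away from zero'' (which is what actually holds for the discs near $f_0$ to which the lemma is applied), and the remark about absorbing a power of $\|f\|_Z$ from the fixed function $(1-\zeta)$ is spurious.
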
 
\begin{proof}
Indeed we have $H_t(z,w) = (H_1^t(z,w),H_2^t(z,w))$, with
$$
\left\{
\begin{array}{lll}
H_1^t(z,w) = z + \sum_{j+l=\ell+1}t^{j+dl-1}z^jw^l H_1^{jl}(tz,t^dw),\\
\\
H_2^t(z,w) = w + \sum_{j+l=\ell+1}t^{j+dl-d}z^jw^l H_2^{jl}(tz,t^dw).
\end{array}
\right.
$$
Posing $h= (1-\zeta)\tilde{h}, g = (1-\zeta)\tilde{g}$, with $\tilde{h},\tilde{g}\in \mathcal A^{k,\alpha}$, we get
$$
\left\{
\begin{array}{lll}
H_1^t(h,g) &= &h + (1-\zeta)^{\ell+1}\sum_{j+l=\ell+1}t^{j+dl-1}\tilde{h}^j\tilde{g}^l H_1^{jl}(t(1-\zeta)\tilde{h},t^d(1-\zeta)\tilde{g}),\\
\\
H_2^t(h,g) &=& g + (1-\zeta)^{\ell+1}\sum_{j+l=\ell+1}t^{j+dl-d}\tilde{h}^j\tilde{g}^l H_2^{jl}(t(1-\zeta)\tilde{h},t^d(1-\zeta)\tilde{g}).
\end{array}
\right.
$$
It follows that 
$$
\left\{
\begin{array}{lll}
\|H_1^t(h,g) - h\|_{(1-\zeta)\mathcal A^{k,\alpha}} &=& \left \|(1-\zeta)^\ell \sum_{j+l=\ell+1}t^{j+dl-1}\tilde{h}^j\tilde{g}^l H_1^{jl}(t(1-\zeta)\tilde{h},t^d(1-\zeta)\tilde{g}) \right \|_{\mathcal C_{\C}^{k,\alpha}}, \\
\\
\|H_2^t(h,g) - g\|_{(1-\zeta)\mathcal A^{k,\alpha}} &=& \left \|(1-\zeta)^\ell \sum_{j+l=\ell+1}t^{j+dl-d}\tilde{h}^j\tilde{g}^l H_2^{jl}(t(1-\zeta)\tilde{h},t^d(1-\zeta)\tilde{g})\right \|_{\mathcal C_{\C}^{k,\alpha}}.
\end{array}
\right.
$$
The estimate of the lemma then follows from the previous expressions, using the facts that $j+dl-d\geq 1$ whenever $j+l=\ell+1$, that the norm of $(1-\zeta)$ in $\mathcal A_{k,\alpha}$ is finite, and that the $\mathcal C^{k,\alpha}$ norm of a composition is estimated by a constant times the $\mathcal C^{k,\alpha}$ norms of the maps which are composed.
\end{proof}

\subsection{Finite jet determination of biholomorphic maps}
Let $\mathcal U\subset \mathbb C^2$ and 
$r\in \mathcal C^{d+(\ell_0+2)+ 3}(\mathcal U)= \mathcal C^{d/2+k_0+ 5}(\mathcal U)$ be as in the previous subsection. Define $M=\{r=0\}$, and let $H$ be a local biholomorphism such that $H(0)=0$, $H(M)\subset M$ and $H$ is tangent to the identity up to order $\ell_0 + 2 = k_0 -d/2 + 2$.
\begin{theorem}\label{theojet}
 Under the assumptions above, $H$ coincides with the identity.
\end{theorem}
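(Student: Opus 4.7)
The plan is to apply the dilation $\phi_t$ introduced in Subsection 5.1, reducing the problem to a small perturbation of the model hypersurface $S = \{\rho = 0\}$, and then to exploit the biholomorphic invariance of $k_0$-stationary discs (Proposition \ref{statinvar}) together with the $(\ell_0+2)$-jet injectivity given by Proposition \ref{propprop} ii).

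A preliminary issue is that Lemmas \ref{rt} and \ref{Ht} are formulated under the hypothesis $\ell > d$, whereas here one only has $\ell_0 + 2 = k_0 - d/2 + 2 \leq d/2 + 1 < d$. I would first argue that the condition $H(M)\subset M$ compensates for this gap. Writing $H = \mathrm{Id} + (R_1, R_2)$ with $R$ vanishing to order $\ell_0 + 3$, and matching Taylor coefficients in the identity $r(H(z,w))=0$ on $M\cap\{\imag w = 0\}$, the only contribution at orders below $d + \ell_0$ comes from the pure holomorphic monomials $b_{j,0} z^j$ in $R_2$ (the correction $P_z R_1 + P_{\bar z}\overline{R_1}$ only enters at order $d + \ell_0 + 1$, and terms $z^j w^l$ with $l\geq 1$ are of order $\geq j + d$). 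Vanishing of the harmonic polynomial $\real(b_{j,0} z^j)$ on $\C$ then forces $b_{j,0} = 0$ for every $j < d + \ell_0 + 2$; in particular for $j < d$. With these offending coefficients eliminated, the proof of Lemma \ref{Ht} still yields $\|H_t - \mathrm{Id}\|\to 0$, and combined with Lemma \ref{rt} I can fix $t$ so small that $r_t\in V$ (the neighborhood furnished by Theorem \ref{theodiscs}) and that $H_t$ sends the $\eta$-ball around $f_0$ in $Z$ into itself.

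Now pick any $(c,f) \in \mathcal M^{r_t}_{\eta}$. Since $H_t(M_t)\subset M_t$ and $H_t(0)=0$, Proposition \ref{statinvar} shows that $(c, H_t\circ f)$ is a lift of a $k_0$-stationary disc attached to $M_t$, tied to the origin, and still in $\mathcal S^{r_t}_{0,\eta}$; as the coefficient $c$ is unchanged, $\Phi(r_t, c, H_t\circ f) = \Phi(r_t, c, f)\in \Gamma$, so $(c, H_t\circ f)\in \mathcal M^{r_t}_{\eta}$ as well. Because $H_t$ is tangent to the identity up to order $\ell_0 + 2$ and $f(1)=0$, the Taylor expansions of $H_t\circ f$ and $f$ at $\zeta = 1$ agree up to order $\ell_0 + 2$. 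In particular, projecting onto the first component, $\mathfrak j_{\ell_0+2}(H_1^t(h,g)) = \mathfrak j_{\ell_0+2}(h)$, and Proposition \ref{propprop} ii), applied to $r_t$ in place of $\rho$, forces $H_1^t(h,g) = h$. Within $\mathcal S^{r_t}_{0,\eta}$ the second component $g$ is uniquely determined by the first (its real part equals $P(h,\bar h)$ plus higher order terms on $b\Delta$, and its imaginary part is then fixed by the Hilbert transform and the normalisation $g(1)=0$); consequently $H_2^t(h,g) = g$ as well, and $H_t\circ f = f$ pointwise on $\overline\Delta$.

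Hence $H_t$ fixes every point in the image of every disc of $\mathcal M^{r_t}_{\eta}$. By Proposition \ref{propprop} i), extended to $r_t$ close to $\rho$ via the openness of the surjectivity of $d\varphi$, the set of centres $\{f(0) : (c,f)\in \mathcal M^{r_t}_{\eta}\}$ contains an open subset of $\C^2$, and the identity principle yields $H_t = \mathrm{Id}$ on its entire domain. Since $H = \phi_t\circ H_t\circ \phi_t^{-1}$, this gives $H = \mathrm{Id}$. The main obstacle is the first step: one must make the dilation machinery of Subsection 5.1 work under the weaker tangency $\ell_0 + 2 < d$, which requires extracting from $H(M)\subset M$ the vanishing of precisely those Taylor coefficients of $H_2$ that would otherwise blow up under rescaling; the rest of the argument is a rather mechanical combination of the perturbation theory of Section 3 and the geometric properties of Section 4.
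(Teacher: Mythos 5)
Your argument is correct and follows the same route as the paper's proof: dilate by $\phi_t$ so that $r_t$ lands in the neighborhood $V$ of Theorem \ref{theodiscs}, use Proposition \ref{statinvar} and the invariance of the coefficient function $c$ to see that $H_t$ maps $\mathcal M^{r_t}_{\eta}$ into itself, apply the injectivity of $\mathfrak j_{\ell_0+2}$ from Proposition \ref{propprop}~ii) to conclude $H_t\circ f_q\equiv f_q$, and then use the surjectivity of the evaluation map from Proposition \ref{propprop}~i) to fix $H_t$ on an open set of centers. The one place where you genuinely add something is your preliminary normalization step: you correctly observe that Lemma \ref{Ht} is proved under the hypothesis $\ell>d$ (its proof needs $j+dl-d\geq 1$ for $j+l=\ell+1$, which fails for $l=0$, $j=\ell_0+3\leq d/2+2$), whereas the theorem only assumes tangency to order $\ell_0+2<d$; the paper applies the lemma with $\ell=\ell_0+2$ without comment. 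Your fix --- extracting from $r(H(z,w))=0$ on $M$ that the pure holomorphic coefficients $b_{j0}$ of $H_2-w$ must vanish for all $j\leq d+\ell_0+1$ (since $\real(b_{j0}z^j)$ is the only contribution at those degrees and is harmonic), which removes exactly the monomials whose dilations $t^{j-d}z^j$ would not shrink --- is sound and makes the dilation estimate of Lemma \ref{Ht} valid in the range actually needed; note only that you should kill $b_{j0}$ up to $j\leq d$ inclusive (the exponent $j-d$ must be $\geq 1$, not merely $\geq 0$), which your bound $j<d+\ell_0+2$ does cover.
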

\begin{proof}
According to the first point of Proposition \ref{propprop}, for every sufficiently small $\epsilon >0$ there 
exists a $k_0$-stationary disc $f \in Z$ attached to the model hypersurface $S=\{\rho=0\}$ 
with coefficient $c\in \mathcal C^{k,\alpha}$ such that $\|(c,f)-(1,f_0)\|_Y<\epsilon$ and the restriction of 
the evaluation map $\varphi$, as a map from $\mathcal M^\rho_{\eta}$ to $\C^2$, to the tangent space of $\mathcal M^\rho_{\eta}$ at $(c,f)$ is onto. 
As a 
consequence we have the following: for every sufficiently small 
$\epsilon>0$, there exists $\epsilon'>0$ such that for all $r
\in X$ with $\|r-\rho\|_X<\epsilon'$ there exists a $k_0$-stationary disc $f\in Z$ attached to $\{r=0\}$ with 
coefficient function $c\in \mathcal C^{k,\alpha}$ such that $\|(c,f)-(1,f_0)\|_Y<\epsilon$ and the 
restriction of the evaluation map $\varphi$ to the tangent space of $\mathcal M^r_{\epsilon}$ at $(c,f)$ is onto.

Fix now $\epsilon>0$ such that $\epsilon<\eta/2$, where $\eta$ is defined in Theorem \ref{theodiscs}. 
Using Lemma \ref{rt}, we can find $t>0$ such that $r_t\in V$ and $\|r_t - \rho\|<\epsilon'$, where $V$ is 
neighborhood of $\rho$ in $X$ identified in Theorem \ref{theodiscs}. We can further require that $t< 
\epsilon/2^{\ell_0+4}K\|f_0\|^{\ell_0+4}_Z$ with $K$ as in the statement of Lemma \ref{Ht}. In view of the 
previous paragraph, we can find a $k_0$-stationary disc $(c_1,f_1)\in \mathcal M^{r_t}_{\epsilon}$, 
such that the evaluation map $\varphi$ is locally surjective 
in a neighborhood $(c_1,f_1)$.

Hence, we can find a neighborhood $O'$ of $(c_1,f_1)$ in $\mathcal M^{r_t}_{\epsilon}$ such that $\varphi(O') = O$ 
is an open subset of the bidisc $(\delta \Delta)^2$. 
Choose any $q\in O$, and let $(c_q,f_q)
\in O'\subset \mathcal M^{r_t}_{\epsilon}$ be such that $f(0)=q$. Note that $\|f_q\|_Z \leq \|f_0\|_Z + \epsilon\leq 2\|f_0\|_Z$ if $\epsilon$ is 
small enough. Since $H_t$ is tangent to the identity up to order $\ell_0 + 2$, we can apply Lemma 
\ref{Ht} with $\ell=\ell_0+2$ to get 
\begin{equation}\label{eqestH}
\|H_t\circ f_q - f_q \|_Z \leq tK\|f_q\|_Z^{\ell_0+4} \leq tK 2^{\ell_0+4} \|f_0\|^{\ell_0+4}\leq \epsilon
\end{equation}
by the choice of $t$. On the other hand, by Proposition \ref{statinvar} the disc $H_t\circ f_q$ is again a 
stationary disc attached to $\{r_t=0\}$, with coefficient function $c_q$, and tied to the origin. 
Moreover, from (\ref{eqestH})
  follows that 
$$\|(c_q, H_t\circ f_q) - (c_0,f_0)\|_Y \leq 2\epsilon < \eta.$$
Thus $(c_q,H_t\circ f_q) \in \mathcal S^{r_t}_{0,\eta}$, 
and since $\Phi(r_t, c_q, H_t\circ f_q) = \Phi(r_t, c_q, 
f_q) \in \Gamma$ we deduce that $(c_q,H_t\circ f_q)\in \mathcal M^{r_t}_{\eta}$.

Since $H_t$ is tangent to the identity up to order $\ell_0 + 2$, we also have that the holomorphic disc 
$H_t\circ f_q(\zeta)$ is tangent to $f_q(\zeta)$ at $\zeta=1$ up to order $\ell_0+2$. The second point of 
Proposition \ref{propprop} then implies that $H_t\circ f_q \equiv f_q$, and computing at $\zeta=0$, we get 
in particular $H_t(q)=q$. Since this holds for any $q$ belonging to the open set $O$, we conclude that 
$H_t$, and therefore $H$, coincides with the identity map.
\end{proof}

As stressed in \cite{be-bl}, the approach based on invariants such as $k_0$-stationary discs attached to a given hypersurface $M
$, allows one to deal with one-sided biholomorphisms that extend smoothly up to $M$. 

The  situation we first consider is the case of germs of CR diffeomorphisms between two pseudoconvex hypersurfaces 
of the form (\ref{eqdef3}). Such a germ admits a local one-sided holomorphic extension \cite{ba-tr, tu1, tr} that extends smoothly 
up to the given hypersurface (Theorem 7.5.1 in \cite{ber}), and therefore, one obtain: 
\begin{theorem}\label{theocr}
Let $M,\ M'\subset \C^{2}$ be two pseudoconvex hypersurfaces  
of the form (\ref{eqdef3}) at $0\in M$. Suppose that $M$ is of class $\mathcal{C}^{d/2+k_0+5}$. 
Then the germs at $0$ of CR diffeomorphisms of class $\mathcal{C}^{k_0-d/2+2}$
 between $M$ and $M'$ are uniquely determined by their $(k_0-d/2+2)$-jet at $0$.
\end{theorem}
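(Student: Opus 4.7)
The strategy is to reduce Theorem \ref{theocr} to a one-sided analogue of the argument behind Theorem \ref{theojet}. Given two CR diffeomorphisms $H_1, H_2 : (M,0) \to (M',0)$ of class $\mathcal C^{k_0 - d/2 + 2}$ with identical $(k_0 - d/2 + 2)$-jets at $0$, the first step is to form $\Psi := H_2^{-1} \circ H_1$, which is a germ at $0$ of a CR self-diffeomorphism of $M$ tangent to the identity up to order $\ell_0 + 2 = k_0 - d/2 + 2$; showing $\Psi = \mathrm{id}$ suffices. Invoking the cited extension result (\cite{ba-tr, tu1, tr} combined with Theorem 7.5.1 of \cite{ber}), $\Psi$ extends to a map $\tilde\Psi$ holomorphic on the pseudoconvex side $\{r \le 0\}$ of $M$, of class $\mathcal C^{\ell_0 + 2}$ up to $M$, preserving $M$ and still tangent to the identity up to order $\ell_0 + 2$ at $0$; the same statement applied to $\Psi^{-1}$ provides an inverse $\tilde\Psi^{-1}$ with matching regularity on the same side.

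I would then rerun the proof of Theorem \ref{theojet} verbatim, with $\tilde\Psi$ in place of the two-sided biholomorphism $H$. The one new point to justify is that the composition $\tilde\Psi_t \circ f_q$ is well-defined, where $f_q \in \mathcal M^{r_t}_{\eta}$ is the $k_0$-stationary disc produced in the proof and $\tilde\Psi_t = \phi_t^{-1}\circ \tilde\Psi \circ \phi_t$ is its dilate. This boils down to checking that the discs in $\mathcal M^{r_t}_\eta$ lie in the closed pseudoconvex side of $\{r_t=0\}$. For the model disc $f_0 = (1-\zeta, g_0)$ of Example \ref{exmodel}, the function $\zeta\mapsto P(h_0(\zeta), \overline{h_0(\zeta)})$ is subharmonic on $\Delta$ as the composition of the subharmonic $P$ with a holomorphic map, while $\real g_0$ is its harmonic extension from $b\Delta$; the submean inequality gives $\rho \circ f_0 = -\real g_0 + P(h_0, \overline{h_0}) \le 0$ on $\Delta$. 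Continuity in the parameters $(r,t)$ then propagates this inequality to every element of $\mathcal M^{r_t}_\eta$, provided $\eta$ and $t$ are small enough that $r_t$ stays close to $\rho$.

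The rest of the argument proceeds as in Theorem \ref{theojet}. Proposition \ref{statinvar} carries over because its proof only uses the chain rule applied to the defining function $r_t \circ \tilde\Psi_t^{-1}$, which in our setting is defined on the pseudoconvex side and smooth up to $M$; Lemma \ref{Ht} carries over unchanged, since its estimate reduces to a Taylor expansion of $\tilde\Psi_t$ at the origin based on the tangency order and the $\mathcal C^{\ell_0+2}$ regularity up to $M$, both preserved by the one-sided extension. Consequently $\tilde\Psi_t \circ f_q \in \mathcal M^{r_t}_\eta$, the jet-injectivity of Proposition \ref{propprop}(ii) forces $\tilde\Psi_t \circ f_q = f_q$, evaluation at $\zeta = 0$ yields $\tilde\Psi_t(q) = q$ on the open set $O$ of centers, and holomorphic continuation on the pseudoconvex side gives $\tilde\Psi_t \equiv \mathrm{id}$; hence $\Psi = \mathrm{id}$ on $M$ near $0$ and $H_1 = H_2$ as germs. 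The main technical obstacle I foresee is justifying Proposition \ref{statinvar} cleanly in the one-sided setting—specifically, verifying that $r_t \circ \tilde\Psi_t^{-1}$ still serves as a legitimate local defining function for the purposes of the stationary-disc condition, and keeping careful track of the rescaling of the coefficient function $c_q$ entailed by this change of defining function, so that the composed disc still lies in the submanifold $\mathcal M^{r_t}_\eta$ rather than only in $\mathcal S^{r_t}_{0,\eta}$.
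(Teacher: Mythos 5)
Your proposal is correct and follows essentially the same route as the paper, whose entire argument for this theorem is the one-line reduction you describe: a CR diffeomorphism between such hypersurfaces admits a one-sided holomorphic extension, smooth up to the boundary, to which the proof of Theorem \ref{theojet} applies verbatim. Your additional verification that the stationary discs lie in the closed pseudoconvex side (via subharmonicity of $P\circ h_0$ versus its harmonic majorant $\real g_0$) addresses a point the paper leaves implicit, and is a welcome, correct supplement.
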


Since any  biholomorhisms between two smooth bounded pseudoconvex domains of finite type extends up to the boundary \cite{be-li, di-fo}, 
one obtain a boundary version of H. Cartan's uniqueness theorem: 
\begin{theorem}\label{theobd}
Let $D,\ D'\subset \C^{2}$ be two smooth bounded pseudoconvex   domains, and let $p\in bD$ be such that the local defining function of $D$ has the form (\ref{eqdef3}). 
If $H_1$ and $H_2$ are two biholomorhisms from $D$ onto $D'$ with the same $(k_0-d/2+2)$-jet at $p$, they coincide.
\end{theorem}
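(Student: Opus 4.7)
The plan is to reduce Theorem \ref{theobd} to the CR-diffeomorphism jet determination Theorem \ref{theocr} by passing to the boundary. Set $H = H_2^{-1} \circ H_1$, a biholomorphism of $D$ onto itself; the statement reduces to proving $H = \mathrm{id}_D$.

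First, I would invoke the smooth extension results of \cite{be-li, di-fo}: any biholomorphism between two smooth bounded pseudoconvex domains of finite type in $\C^2$ extends $\mathcal{C}^\infty$-smoothly to the closures. Applying this to $H_1$ and $H_2$ yields smooth extensions to $\overline{D}$; in particular $H$ extends $\mathcal{C}^\infty$-smoothly to $\overline{D}$, satisfies $H(p)=p$ (since $H_1(p) = H_2(p) =: p' \in bD'$), locally maps $bD$ into itself near $p$, and its restriction $h := H|_{bD}$ is a smooth CR-diffeomorphism from a neighborhood of $p$ in $bD$ onto another such neighborhood.

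Next I would verify that $H$ is tangent to the identity up to order $k_0 - d/2 + 2$ at $p$. This is a routine consequence of the chain rule: if $H_1$ and $H_2$ share the same $(k_0 - d/2 + 2)$-jet at $p$, then $H_1 - H_2 = O(|q-p|^{k_0 - d/2 + 3})$, and linearising $H_2^{-1}$ around $H_2(q)$ gives $H(q) - q = DH_2^{-1}(H_2(q)) \cdot (H_1 - H_2)(q) + \text{(higher order)}$, which vanishes to the same order at $p$. In particular, $h$ has the same $(k_0 - d/2 + 2)$-jet at $p$ as the identity CR-diffeomorphism of $bD$. Choosing local holomorphic coordinates at $p = 0$ in which the defining function of $D$ has the form (\ref{eqdef3}), we are exactly in the setting of Theorem \ref{theocr}.

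Applying Theorem \ref{theocr} to $h$ and to the identity map of $bD$ near $p$ (both CR-diffeomorphisms of class $\mathcal{C}^\infty$, hence of class $\mathcal{C}^{k_0 - d/2 + 2}$, sharing the same $(k_0 - d/2 + 2)$-jet at $p$) forces $h \equiv \mathrm{id}$ on a boundary neighborhood of $p$. Consequently, each component of $H - \mathrm{id}$ is holomorphic on $D$, smooth up to $bD$, and vanishes on a non-empty relatively open subset of the smooth real hypersurface $bD$. Standard boundary uniqueness for holomorphic functions (e.g.\ via the edge-of-the-wedge theorem, or simply because $bD$ is generic and a holomorphic function in $\mathcal{A}(D)$ vanishing on an open piece of $bD$ must vanish identically) then yields $H \equiv \mathrm{id}_D$, i.e.\ $H_1 = H_2$. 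The only step requiring any real care is the jet-preservation under composition and inversion; the rest is a straightforward assembly of Theorem \ref{theocr} with the classical boundary extension and boundary uniqueness statements.
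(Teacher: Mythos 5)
Your proposal is correct and follows essentially the same route the paper intends: reduce to $H=H_2^{-1}\circ H_1$, invoke the smooth boundary extension of \cite{be-li, di-fo}, check tangency to the identity to order $k_0-d/2+2$, and apply the jet determination machinery (Theorem \ref{theocr}, itself a corollary of Theorem \ref{theojet}). The only cosmetic difference is your final step via boundary uniqueness for holomorphic functions; the paper's disc argument already yields $H=\mathrm{id}$ on an open subset of $D$ (the disc centers fill an open set), after which the identity theorem finishes, but both conclusions are valid.
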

 
We note that the previous theorem is in the vein of some previous boundary rigidity results involving smooth hypersurfaces. 
D. Burns and  S. Krantz 
\cite{bu-kr} proved that if $D\subset \C^n$ is a smooth bounded and strongly pseudoconvex domain, then 
if $H: D \rightarrow D$ is holomorphic and satisfies $H(Z)=Z+o(|Z-p|^3)$ as $Z\rightarrow p \in bD$, 
then $H$ coincides with the identity. As shown by X. Huang \cite{hu1}, in case 
$D$ is  strongly convex, the assumption reduces to $H(Z)=Z+o(|Z-p|^2)$. Moreover, if  $D \subset \C^n$ 
is a smooth bounded convex domain of finite type, then there exists a number $\ell$, depending only 
the geometric properties of $bD$ near $p$,  such that if $H(Z)=Z+o(|Z-p|^m)$ as $Z\rightarrow p \in bD$, then  
$H$ coincides with the identity; for instance in $\C^2$, the number $\ell$ can be taken to be $5d+\varepsilon$ for some $\epsilon>0$, 
where $d$ is the type of $bD$ at $p$.   
As a matter of fact, we point out that the methods developped in \cite{bu-kr, hu1} were based on Lempert's theory \cite{le} of 
complex geodesics for the Kobayashi metric in convex bounded domains.

\vskip 0,5cm
{\small
\noindent Florian Bertrand\\
Department of Mathematics, University of Vienna, Nordbergstrasse 15, Vienna, 1090, Austria\\
{\sl E-mail address}: florian.bertrand@univie.ac.at\\
\\
Giuseppe Della Sala \\
Department of Mathematics, University of Vienna, Nordbergstrasse 15, Vienna, 1090, Austria\\
{\sl E-mail address}: 	giuseppe.dellasala@univie.ac.at\\
}

\end{document}